\newcommand{\resp}{{\sfcode`\.1000 resp.}}
\newcommand{\ie}{{\sfcode`\.1000 i.e.}}
\newcommand{\eg}{{\sfcode`\.1000 e.g.}}
\newcommand{\cf}{{\sfcode`\.1000 cf.}}
\newtheoremstyle{theoremstyle}
  {10pt}        {5pt}         {\itshape}    {}            {\bfseries}   {:}           {.5em}                      {}          
\newtheoremstyle{examplestyle}
  {10pt}        {5pt}         {}            {}            {\bfseries}   {:}           {.5em}                      {}          
\theoremstyle{theoremstyle}
\newtheorem{theorem}{Theorem}[section]
\newtheorem*{theorem*}{Theorem}
\newtheorem{lemma}[theorem]{Lemma}
\newtheorem{proposition}[theorem]{Proposition}
\newtheorem*{proposition*}{Proposition}
\newtheorem{corollary}[theorem]{Corollary}
\newtheorem*{corollary*}{Corollary}
\theoremstyle{examplestyle}
\newtheorem{definition}[theorem]{Definition}
\newtheorem{definition*}{Definition}
\newtheorem{remark}[theorem]{Remark}
\newtheorem{remark*}{Remark}
\DeclareMathOperator{\Char}{char}
\newcommand{\AAA}{{\mathcal A}}
\newcommand{\MMM}{{\mathcal M}}
\newcommand{\Sch}{\mathrm{Sch}}
\newcommand{\Sm}{\mathrm{Sm}}
\newcommand{\op}{\mathrm{op}}
\newcommand{\HH}{\mathsf{H}}
\newcommand{\MM}{\mathsf{M}}
\newcommand{\K}{K}
\newcommand{\EE}{\mathsf{E}}
\newcommand{\SH}{\mathsf{SH}}
\newcommand{\KGL}{\mathsf{KGL}}
\newcommand{\tr}{\mathrm{tr}}
\newcommand{\fpsl}{\mathrm{fps}\ell'}
\newcommand{\zll}{\bb{Z}_{(\ell)}}
\newcommand{\cdh}{\mathrm{cdh}}
\newcommand{\Nis}{\mathrm{Nis}}
\newcommand{\red}{\mathrm{red}}
\def\DM{\scr D\scr M}
\def\X{\mathcal{X}}
\def\Tr{\mathrm{Tr}}
\newcommand{\G}{\bb{G}}
\newcommand{\A}{\bb{A}}
\newcommand{\ZZ}{{\bb Z}}
\newcommand{\QQ}{{\bb Q}}
\newcommand{\RR}{{\mathsf R}}
\newcommand{\D}{{\mathcal D}}
\let\scr=\mathcal
\let\bb=\mathbf
\def\L{\mathbf L}
\def\R{\mathbf R}
\def\Z{\bb Z}
\def\Q{\bb Q}
\def\C{\bb C}
\def\A{\bb A}
\def\P{\bb P}
\def\1{\mathbf 1}
\def\H{\scr H}
\def\SH{\scr S\scr H}
\def\Sch{\scr S\mathrm{ch}}
\def\Set{\scr S\mathrm{et}}
\def\aast{{\ast\ast}}
\def\et{{\mathrm{\acute et}}}
\def\cdh{{\mathrm{cdh}}}
\def\ldh{{\ell\mathrm{dh}}}
\def\Nis{{\mathrm{Nis}}}
\def\tr{\mathrm{tr}}
\def\pt{\ast}
\def\ph{{\mathord-}}
\def\op{\mathrm{op}}
\def\suchthat{\:\vert\:}
\def\abs #1{\lvert #1\rvert}
\let\into=\hookrightarrow
\let\tens=\otimes
\def\star{{\ast\ast}}
\def\C{\scr C}
\def\Ab{\scr A\mathrm{b}}
\def\Spt{\scr S\mathrm{pt}}
\def\s{s}
\def\Pre{\mathrm P\mathrm{re}{}}
\def\Mod{\operatorname{-mod}}
\def\Sm{\scr S\mathrm{m}}
\def\Cat{\scr C\mathrm{at}}
\def\Cor{\scr C\mathrm{or}}
\def\Sp{\mathrm{Sp}{}}
\DeclareMathOperator{\Hom}{Hom}
\DeclareMathOperator{\Ho}{Ho}
\DeclareMathOperator{\Map}{Map}
\DeclareMathOperator{\Spec}{Spec}
\DeclareMathOperator{\id}{id}
\DeclareMathOperator{\Sq}{Sq}
\let\lim=\relax
\DeclareMathOperator{\lim}{lim}
\DeclareMathOperator{\colim}{colim}
\DeclareMathOperator{\hocolim}{hocolim}
\DeclareMathOperator{\holim}{holim}
\title{{\bf The motivic Steenrod algebra in positive characteristic}}
\author{Marc Hoyois, Shane Kelly, Paul Arne {\O}stv{\ae}r}
\date{\today}
\begin{document}
\maketitle
\begin{abstract}
Let $S$ be an essentially smooth scheme over a field and $\ell\neq\Char S$ a prime number.
We show that the algebra of bistable operations in the mod $\ell$ motivic cohomology of smooth $S$-schemes is generated by the motivic Steenrod operations.
This was previously proved by Voevodsky for $S$ a field of characteristic zero.
We follow Voevodsky's proof but remove its dependence on characteristic zero by using étale cohomology instead of topological realization and by replacing resolution of singularities with a theorem of Gabber on alterations.
\end{abstract}
{\small{\tableofcontents}}
\newpage

\section{Introduction}
\label{section:introduction}

This work is a continuation of \cite{VV:EMspaces}.
The main results of \emph{loc.\ cit.} are the computations, over fields of characteristic zero, of the motives of motivic Eilenberg--Mac Lane spaces and of the algebra of bistable operations in motivic cohomology. Here we generalize these computations to fields of positive characteristic, and further to essentially smooth schemes over fields.
\vspace{0.1in}

Let $k$ be a perfect field and $\ell \neq\Char k$ a prime number. We denote by $\MMM^\star$ the algebra of bistable operations in the motivic cohomology 
of smooth $k$-schemes with coefficients in $\Z/\ell$ introduced in \cite[\S2]{VV:motivicalgebra}. 
Examples of such operations are the reduced power operations $P^i\in\MMM^{2i(\ell-1),i(\ell-1)}$ for $i\geq 1$, the Bockstein operation $\beta\in\MMM^{1,0}$, and the operations given by multiplication by cohomology classes in $\HH^\star(k,\Z/\ell)$; we denote by $\AAA^\star$ the subalgebra of $\MMM^\star$ generated by these operations. The structure of the algebra $\scr A^\star$ was completely determined in \cite{VV:motivicalgebra}.
On the other hand, motivic cohomology with coefficients in $\Z/\ell$ is represented, in the stable motivic homotopy category $\SH(\Sm_k)$, by the motivic Eilenberg--Mac Lane spectrum $\MM\ZZ/\ell$. It follows that any morphism $\MM\ZZ/\ell\to\Sigma^{p,q}\MM\ZZ/\ell$ in $\SH(\Sm_k)$ induces a bistable operation of bidegree $(p,q)$, and in fact that every operation arises in this way. We therefore have the following situation:
\[\scr A^\aast\into\MMM^\star\twoheadleftarrow \MM\ZZ/\ell^\star\MM\ZZ/\ell.\]
Everything just described can be extended to essentially smooth schemes over fields (see Definition~\ref{definition:essentiallySmooth}).
Our main results are gathered in the following theorem.

\begin{theorem}
\label{theorem:main}
Suppose $S$ is a Noetherian scheme of finite Krull dimension that is essentially smooth over a field, 
and let $\ell\neq\Char S$ be a prime number. 
\begin{enumerate}
	\item The operations
		\[\{\beta^{\epsilon_r}P^{i_r}\dotso\beta^{\epsilon_1}P^{i_1}\beta^{\epsilon_0}\suchthat r\geq 0,\; i_j>0,\; \epsilon_j\in\{0,1\},\; i_{j+1}\geq \ell i_j+\epsilon_j\}\]
		form a basis of $\MMM^\star$ as a left $\HH^\star(S,\Z/\ell)$-module. In particular, $\AAA^\star = \MMM^\star$.
	\item The map $\MM\ZZ/\ell^\star\MM\ZZ/\ell\to \MMM^\star$ is an isomorphism.
	\item There is an equivalence of $\MM\ZZ/\ell$-modules \[\MM\ZZ/\ell\wedge\MM\ZZ/\ell\simeq\bigvee_\alpha\Sigma^{p_\alpha,q_\alpha}\MM\ZZ/\ell\] where $(p_\alpha,q_\alpha)$ are the bidegrees of the operations in (1). 
		\end{enumerate}
\end{theorem}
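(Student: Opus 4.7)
The plan is to establish part (3) as the heart of the theorem and deduce (1) and (2) from it. Given the splitting $\MM\ZZ/\ell\wedge\MM\ZZ/\ell\simeq\bigvee_\alpha\Sigma^{p_\alpha,q_\alpha}\MM\ZZ/\ell$ with the indicated bidegrees, the group $\MM\ZZ/\ell^\star\MM\ZZ/\ell$ becomes a direct product of shifted copies of $\HH^\star(S,\ZZ/\ell)$ indexed by admissible monomials, so the natural map to $\MMM^\star$ is bijective, yielding (2); the count of generators per bidegree then matches the admissible monomials in $\scr A^\star$ of \cite{VV:motivicalgebra}, yielding (1).

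First I would reduce to the case where $S=\Spec(k)$ with $k$ a separably closed field of characteristic $\neq\ell$. An essentially smooth scheme over a field is a cofiltered limit of smooth schemes with affine transitions, so continuity of $\MM\ZZ/\ell$-cohomology and of stable mapping spaces in $\SH(\Sm_{(-)})$ reduces all three statements to the base field. Base change along the perfection, a filtered colimit of finite purely inseparable extensions, is an equivalence at the prime $\ell\neq\Char k$, so we may assume $k$ is perfect. Passage from a perfect $k$ to its separable closure is then effected by a Galois-descent argument, using that both sides of each isomorphism carry a $\Gal(k^\sep/k)$-action and that mod-$\ell$ motivic cohomology satisfies étale descent in the relevant range.

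Over a separably closed field $k$ of characteristic $p\neq\ell$, the main computation parallels \cite{VV:EMspaces}. The inductive target is to determine the motive of the motivic Eilenberg--Mac Lane space $K(\ZZ/\ell(q),2q)$ in terms of $\PP^\infty$'s and of Thom spaces of tautological bundles over symmetric products. Voevodsky's argument depends on characteristic zero in two essential places: Betti realization, used to detect nonvanishing of certain classes constructed geometrically; and resolution of singularities, used to access the motives of compactifications and of symmetric products. I would replace the former with $\ell$-adic étale realization---available in any characteristic coprime to $\ell$ and sufficient to detect nonvanishing of mod-$\ell$ motivic classes in the relevant bidegree range---and the latter with Gabber's theorem on prime-to-$\ell$ alterations, which suffices to produce the required Chow--Künneth type splittings after inverting integers coprime to $\ell$, hence in $\ZZ/\ell$-coefficients.

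The main obstacle I expect is making Gabber's alterations perform the job of resolution in the identification of the motives of symmetric products. Since Gabber's alterations are only of generic degree prime to $\ell$ rather than birational, each appeal to resolution must be recast as a transfer/norm argument, and one must verify that the resulting diagrams of motives with $\ZZ/\ell$-coefficients still admit the splittings used in \emph{loc.\ cit.} Coupled with the task of checking compatibility of étale realization with the Thom isomorphism and with the geometric constructions appearing in the induction, this is the delicate part of the proof; once it is in place, the computation of $M(K(\ZZ/\ell(q),2q))$ proceeds as in the characteristic-zero case, and parts (1)--(3) follow as outlined.
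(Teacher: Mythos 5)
Your high-level diagnosis is exactly right: replace Betti realization with étale realization to detect classes, and replace resolution of singularities with Gabber's prime-to-$\ell$ alterations, handling the non-birationality via transfers. These are precisely the two substitutions the paper makes. However, your execution route diverges from the paper's in ways that create genuine difficulties, and one step is logically out of order.

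First, the order of deduction is problematic. You propose to prove (3) with the \emph{indicated} bidegrees and then read off (1) by counting. But the bidegrees appearing in (3) are, by definition, the bidegrees of the admissible monomials of (1); establishing that the abstract splitting $\MM\ZZ/\ell\wedge\MM\ZZ/\ell\simeq\bigvee_\alpha\Sigma^{p_\alpha,q_\alpha}\MM\ZZ/\ell$ (which follows formally from $\L\ZZ/\ell^\tr K_n$ being split proper Tate) has exactly those bidegrees \emph{is} the content of (1). The paper therefore proves (1) over a perfect field first (via the étale comparison), proves (2) independently of (1) (via a $\lim^1$ vanishing coming from the split proper Tate structure), obtains the abstract splitting in (3), and only then identifies the bidegrees by matching against the already-proved (1); it finally uses (3) to bootstrap (1) from perfect fields to arbitrary essentially smooth bases. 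Your proposed chain ``(3) $\Rightarrow$ (2) $\Rightarrow$ (1)'' cannot be carried out without presupposing the bidegree identification, i.e., presupposing (1).

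Second, and more substantially, your proposed core computation---``determine the motive of $\K(\ZZ/\ell(q),2q)$ in terms of $\PP^\infty$'s and Thom spaces over symmetric products,'' redoing Voevodsky's induction with Gabber in place of resolution---is \emph{not} the paper's approach. The paper never recomputes the motive of the Eilenberg--Mac Lane space over smooth schemes from scratch. Instead it proves the ``commutativity of the fundamental square'' (Theorem~\ref{theorem:CDcomm}): the restriction $i^\ast$ from the category of normal $k$-schemes to $\Sm_k$ commutes with $\L R^\tr$. Since symmetric products of normal schemes remain normal, Voevodsky's computation of $\L F^\tr\K(A(q),p)$ as a split proper Tate motive applies verbatim in the large category; the fundamental square then transports it to $\Sm_k$ for free. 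The technical engine behind the fundamental square is the $\ldh$-topology (the $\cdh$-topology enlarged by finite flat surjective covers of degree prime to $\ell$), Gabber's theorem rephrased as ``every scheme admits a smooth $\ldh$-cover,'' the theory of structures of traces, and the result that $\MM\ZZ_{(\ell)}$-modules satisfy $\ldh$-descent. Your plan---``recast each appeal to resolution as a transfer/norm argument'' inside Voevodsky's symmetric-product induction---would require re-examining a long and delicate induction at each step; the paper's route avoids this entirely by making one global descent statement. Your plan is not obviously wrong, but it is much harder to carry out and is not what the authors do.

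Third, a smaller point: the reduction from a perfect field to an algebraically closed one is done via conservativity of base change for split proper Tate motives (cited as \cite[Corollary 2.70]{VV:EMspaces}), not via a Galois-descent argument on the two sides of the isomorphism. Galois descent for $\MMM^\star$ in the form you suggest is not clearly available (bistable operations are not obviously an étale sheaf), whereas the split proper Tate conservativity is precisely tailored for this reduction once one has Theorem~\ref{theorem:splitpropertatemotive}.
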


In characteristic zero Theorem~\ref{theorem:main} is an easy consequence of the results of \cite{VV:motivicalgebra} and \cite{VV:EMspaces}.
We emphasize that the identification of the mod $\ell$ motivic cohomology of $\MM\ZZ/\ell$ with $\MMM^\star$ is not trivial and is a new result in positive characteristic.
It implies that the mod $\ell$ motivic cohomology of any motivic spectrum over $S$ has a well-defined left $\AAA^\star$-module structure. 
\vspace{0.1in}

One crucial step in the proof of Theorem \ref{theorem:main} makes use of base change arguments.
It reduces the proof to the case when $S$ is a \emph{perfect} field $k$. 
\vspace{0.1in}

Voevodsky's proof of Theorem~\ref{theorem:main} (1) for fields of characteristic zero (\cite[\S3.4]{VV:EMspaces}) proceeds by realizing the inclusion $\AAA^\star \into \MMM^\star$
as the cohomology of a map of split proper Tate motives $\MMM \to \AAA$ \cite[Lemma 3.50]{VV:EMspaces}.
Since field extensions are conservative for split proper Tate motives \cite[Corollary 2.70]{VV:EMspaces}, this allows him to assume that $k$ is the field of complex numbers.
The calculations concluding the proof use the topological realization functor to compare $\scr M^\aast$ with the classical Steenrod algebra.
The main problems with attempting this proof in positive characteristic are:
\begin{enumerate}
\item[(a)] There is no topological realization functor.
\item[(b)] The only known technique for proving that motives of motivic Eilenberg-Mac Lane spaces are split proper Tate motives uses symmetric products, 
a construction which takes us out of the category $\Sm_k$ of smooth schemes. 
Resolution of singularities is then used to bridge the gap between the motivic homotopy category and its non-smooth analog, 
and likewise for the category of motives.
\end{enumerate}

We solve these problems by replacing the topological realization functor with a cohomological étale realization functor (\S\ref{subsection:etalerealization}), 
and resolution of singularities with a theorem of Gabber on alterations \cite{Gabber1,Gabber2} (see Theorem~\ref{theorem:gabberGlobal}). 
To apply the latter we use the $\ldh$-topology introduced in \cite{KellyThesis}. The $\ldh$-topology is a refinement of the $\cdh$-topology that allows finite flat surjective maps of degree prime to $\ell$ as coverings (see Definition \ref{definition:fpsl}). 
While the $\cdh$-topology is designed to make schemes locally smooth in the presence of resolutions of singularities, Gabber's theorem implies that every separated scheme of finite type over a perfect field admits an $\ldh$-covering by smooth schemes.
\vspace{0.1in}

In \S \ref{section:background}  we first present what is relevant from the theory of presheaves with transfers.  
We proceed by explaining our base change arguments for motivic Eilenberg-Mac Lane spaces and operations in motivic cohomology.
Theorem \ref{theorem:main} is proved in \S \ref{section:mainresults} modulo the commutativity of the ``fundamental square'' formulated in 
Theorem \ref{theorem:CDcomm}, 
whose proof is deferred to \S \ref{section:proofs}.
Our proof of the latter relies on the central result of \cite[\S5]{KellyThesis} which states that $\MM\ZZ_{(\ell)}$-modules satisfy $\ldh$-descent.
For completeness, we repeat in \S\ref{subsection:tfsI}, with minor modifications, the proof of this result from \cite[\S5]{KellyThesis}.
\vspace{0.1in}

In \S \ref{section:applications} we gather some applications of our results and of Gabber's theorem. 
As a direct consequence of Theorem~\ref{theorem:main}, 
we show that for any motivic spectrum $\EE$ over $S$, 
$\MM\Z/\ell_\star \EE$ is a left comodule over the dual motivic Steenrod algebra $\AAA_\star$. 
This fact is essential for adapting Adams spectral sequence techniques to motivic homotopy theory \cite{DI:2010}, \cite{HKO:2011}. 
Secondly, 
we generalize the main result of \cite{RO2} by showing that Voevodsky's category of motives $\DM(\Sm_k,\Z_{(\ell)})$ 
is equivalent to the homotopy category of $\MM\Z_{(\ell)}$-modules when $k$ is perfect and $\ell$ is invertible in $k$.
\vspace{0.1in}

The results of this paper are further applied in \cite{AsokFasel} to study Euler classes and splittings of vector bundles, 
and in \cite{Hoyois} to prove the Hopkins--Morel equivalence relating algebraic cobordism to motivic cohomology as well 
as the computation of the slices of Landweber exact motivic spectra and of the motivic sphere spectrum.
Further applications are being developed in work on Morel's $\pi_1$-conjecture \cite{OrmsbyOestvaer} and on Milnor's 
conjecture on quadratic forms \cite{RO3}.

\vspace{0.1in}
\noindent
{\bf Acknowledgements:} 
It is our pleasure to thank Andrei Suslin for useful suggestions during the preparation of this paper, 
and Chuck Weibel for valuable comments. 
The third author received partial support from the Leiv Eriksson mobility programme and RCN ES479962.  
He would also like to thank the MIT Mathematics Department for its kind hospitality.

\section{Background}
\label{section:background}

\subsection{Main categories and functors}

Let $S$ be a Noetherian scheme of finite Krull dimension. 
We refer to such schemes as \emph{base schemes} for short. 
In this section we set notation by reviewing the homotopy theories of motivic spaces, 
spaces with transfers, 
spectra, 
and spectra with transfers over $S$, 
and the various adjunctions between them. 
We refer to \cite[\S2]{RO2} and \cite[\S1.1]{VV:EMspaces} for more detailed expositions. 

Let $\Sch_S$ be the category of separated schemes of finite type over $S$.
A full subcategory $\scr C$ of $\Sch_S$ is called an \emph{admissible category} if the following hold \cite[\S 0]{VV:EMspaces}:
\begin{enumerate}
\item $S\in\scr C$ and $\A^1_S\in\scr C$.
\item If $X\in\scr C$ and $U\to X$ is \'etale of finite type, then $U\in\scr C$.
\item $\scr C$ is closed under finite products and finite coproducts.
\end{enumerate}
The most important example is the admissible category $\Sm_S$ of separated smooth $S$-schemes of finite type, which we will simply call \emph{smooth schemes}.

If $\scr C$ is an admissible category, let $\Cor(\scr C)$ be the category with the same objects as $\scr C$ but whose morphisms are the finite $S$-correspondences. In the notation of \cite[\S9.1.1]{CD}, the set of morphisms from $X$ to $Y$ in $\Cor(\scr C)$ is the abelian group $c_{0}(X\times_SY/X, \Z)$.\footnote{In \cite{SV:relativecycles} this group is denoted by $c_{\mathit{equi}}(X\times_SY/X,0)$; its definition is however incorrect for nonreduced schemes, see \cite[\S D.3]{CD}.} This is a subgroup of the free abelian group generated by the closed integral subschemes $Z$ of $X\times_SY$ such that the induced morphism $Z \to X$ is finite and dominates an irreducible component of $X$. If $X$ is regular, then $c_{0}(X\times_SY/X, \Z)$ is the entire free abelian group (\cite[Corollary 3.4.6]{SV:relativecycles}) but in general it is a proper subgroup (\cite[Example 3.4.7]{SV:relativecycles}).
The category $\Cor(\scr C)$ is additive, with direct sum given by disjoint union, and it has a symmetric monoidal structure such that the graph functor $\Gamma\colon\scr C\to\Cor(\scr C)$ is symmetric monoidal.

We denote by $\s\Pre^\pt(\scr C)$ the category of pointed simplicial presheaves on $\scr C$. 
If $R$ is any commutative ring, we denote by $\s\Pre^\tr(\scr C,R)$ the category of additive presheaves of simplicial $R$-modules on $\Cor(\scr C)$. 
By left Kan extension, we obtain an adjunction
\begin{equation}
\label{eqn:adjunctions}
R^\tr\colon 
\s\Pre^\pt(\scr C)
\rightleftarrows 
\s\Pre^\tr(\scr C,R)
\colon
u^\tr,
\end{equation}
where the right adjoint is the obvious forgetful functor. 
By the Dold--Kan correspondence, $\s\Pre^\tr(\scr C,R)$ is equivalent to the category of nonnegatively graded (homological) chain complexes of additive 
presheaves of $R$-modules on $\Cor(\scr C)$, but the simplicial description gives the functor $R^\tr$ a symmetric monoidal structure. 
In fact, the adjunction~\eqref{eqn:adjunctions} is a symmetric monoidal Quillen adjunction for the usual projective model structures 
(i.e., the model structures where weak equivalences and fibrations are determined schemewise). 
The associated homotopy categories will be denoted by $\H^\pt(\scr C)$ and $\H^\tr(\scr C,R)$, respectively. 

If $\tau$ is a Grothendieck topology in $\scr C$, consider the classes of maps
\begin{align*}
	W_{\A^1} &=\{(\A^1\times X)_+\to X_+\suchthat X\in\scr C\},\\
	W_{\tau} &=\{\scr X_+\to X_+\suchthat \text{$X\in\scr C$ and $\scr X\to X$ is a $\tau$-hypercover}\}
\end{align*}
in $\s\Pre^\pt(\scr C)$. If $E$ is any set of maps in $\s\Pre^\pt(\scr C)$, we can consider the left Bousfield localization (see \cite{Hirschhorn:2009}) of $\s\Pre^\pt(\scr C)$ at $E$ (\resp{} of $\s\Pre^\tr(\scr C,R)$ at $R^\tr E$). Although the class $W_\tau$ is not essentially small, it is well-known that there exists a set $S\subset W_\tau$ such that an object in $\H^\pt(\scr C)$ is $W_\tau$-local if and only if it is $S$-local. It follows that the left Bousfield localization of $\s\Pre^\pt(\scr C)$ at $W_\tau$ (\resp{} of $\s\Pre^\tr(\scr C,R)$ at $R^\tr W_\tau$) also exists.

The left Bousfield localization of the projective structure on $\s\Pre^\pt(\scr C)$ at $W_{\A^1}\cup W_{\Nis}$ (\resp{} of $\s\Pre^\tr(\scr C,R)$ at $R^\tr(W_{\A^1}\cup W_{\Nis})$) is called the \emph{motivic model structure} and its weak equivalences are the \emph{motivic weak equivalences}. The associated motivic homotopy categories will be denoted by $\H^\pt_{\Nis,\A^1}(\scr C)$ and $\H^\tr_{\Nis,\A^1}(\scr C,R)$, respectively. Occasionally we will consider intermediate localizations or other topologies, in which case we will use self-explanatory notations for the associated homotopy categories, \eg, $\H^\pt_\et(\scr C)$, $\H^\tr_{\cdh,\A^1}(\scr C,R)$, etc.

In general, $\H^\tr_\tau(\scr C,R)$ is \emph{not} equivalent to the unstable derived category of the additive category of 
$\tau$-sheaves of $R$-modules with transfers on $\scr C$. The following lemma provides necessary and sufficient conditions for this to be true.
  
\begin{lemma}\label{lem:compatiblewithtransfers}
	Let $\scr C\subset\Sch_S$ be an admissible category, $R$ a commutative ring, and $\tau$ a topology on $\scr C$. The following assertions are equivalent.
	\begin{enumerate}
		\item $u^\tr R^\tr\colon\s\Pre^\pt(\scr C)\to\s\Pre^\pt(\scr C)$ sends $W_\tau$ to $W_\tau$-local equivalences.
		\item A morphism $f$ in $\s\Pre^\tr(\scr C,R)$ is an $R^\tr W_\tau$-local equivalence if and only if $u^\tr(f)$ is an $W_\tau$-local equivalence.
		\item The square
		\begin{tikzmath}
			\diagram{\H^\tr(\scr C,R) & \H^\tr_\tau(\scr C,R) \\
			\H^\pt(\scr C) & \H^\pt_\tau(\scr C) \\};
			\arrows (11-) edge node[above]{$\L\id$} (-12) (21-) edge node[below]{$\L\id$} (-22) (11) edge node[left]{$\R u^\tr$} (21) (12) edge node[right]{$\R u^\tr$} (22);
		\end{tikzmath}
		is commutative.
	\end{enumerate}
\end{lemma}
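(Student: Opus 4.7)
The plan is to prove $(2) \Rightarrow (1)$, $(2) \Leftrightarrow (3)$, and $(1) \Rightarrow (2)$, with the last being the substantial step. The starting point is two basic observations. First, by the adjunction $R^\tr \dashv u^\tr$, one has $\R\Map_{\s\Pre^\tr}(R^\tr w, Z) \simeq \R\Map_{\s\Pre^\pt}(w, u^\tr Z)$ for every $Z \in \s\Pre^\tr(\scr C, R)$ and $w$ in $\s\Pre^\pt(\scr C)$, whence $Z$ is $R^\tr W_\tau$-local if and only if $u^\tr Z$ is $W_\tau$-local. Second, since projective weak equivalences are detected schemewise and the forgetful functor from simplicial $R$-modules to pointed simplicial sets preserves and reflects weak equivalences, $u^\tr$ preserves and reflects projective weak equivalences. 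The implication $(2) \Rightarrow (1)$ is then immediate, since maps in $R^\tr W_\tau$ are tautologically $R^\tr W_\tau$-local equivalences.

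For $(2) \Leftrightarrow (3)$ I unwind the square. For cofibrant-fibrant $X \in \s\Pre^\tr(\scr C, R)$, $(\L\id \circ \R u^\tr)(X) = u^\tr X$ viewed in $\H^\pt_\tau(\scr C)$, while $(\R u^\tr \circ \L\id)(X) = u^\tr L X$, where $X \to LX$ is a fibrant replacement in the $R^\tr W_\tau$-localized structure (so $u^\tr L X$ is $W_\tau$-local by the first observation above). Commutativity at $X$ thus says $u^\tr X \to u^\tr L X$ is a $W_\tau$-local equivalence, which by 2-out-of-3 is equivalent to $u^\tr$ sending every $R^\tr W_\tau$-local equivalence to a $W_\tau$-local equivalence. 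This is the ``preservation'' half of $(2)$; the ``reflection'' half is automatic, since a $W_\tau$-local equivalence between $W_\tau$-local objects is a projective weak equivalence (which $u^\tr$ reflects), and an arbitrary local equivalence is detected by its induced map on localized fibrant replacements.

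The main step is $(1) \Rightarrow (3)$. A fibrant replacement $X \to L X$ in the localized structure on $\s\Pre^\tr(\scr C, R)$ is constructed by the small object argument from a set of generating trivial cofibrations: the projective generating trivial cofibrations (whose images under $u^\tr$ are projective weak equivalences) together with pushout-product modifications of maps in $R^\tr W_\tau$ with the cofibrations $\partial \Delta^n \hookrightarrow \Delta^n$. Under $(1)$, $u^\tr$ sends each map $R^\tr w$ with $w \in W_\tau$ to a $W_\tau$-local equivalence, and a short calculation using the symmetric monoidal structure together with the stability of $W_\tau$-local equivalences under pushout-product with cofibrations of pointed simplicial sets propagates this to the pushout-product modifications. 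The main technical obstacle is compatibility of $u^\tr$ with the transfinite small-object argument defining $L X$: although $u^\tr$ is a right adjoint, it is computed schemewise and preserves the specific pushouts along free injections of simplicial $R$-modules that arise in this construction. The upshot is that $u^\tr(X \to L X)$ is a transfinite composition of pushouts of $W_\tau$-local equivalences along cofibrations in $\s\Pre^\pt(\scr C)$, hence itself a $W_\tau$-local equivalence, which closes the cycle.
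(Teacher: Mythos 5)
Your handling of $(2)\Rightarrow(1)$ and $(2)\Leftrightarrow(3)$ is fine and matches what the paper means by ``obvious'' and ``essentially formal.'' The genuine gap is in your proof of $(1)\Rightarrow(3)$ (equivalently $(1)\Rightarrow(2)$), specifically at the sentence asserting that ``$u^\tr$\dots{} preserves the specific pushouts along free injections of simplicial $R$-modules that arise in this construction.'' This is false. The forgetful functor from (presheaves of) simplicial $R$-modules to pointed simplicial presheaves does \emph{not} preserve pushouts, not even pushouts along injections: the pushout of $0 \leftarrow 0 \to R$ with $0 \to R$ in $R$-modules is $R\oplus R$, whereas the pushout of the underlying pointed sets is the wedge $R\vee R$. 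So $u^\tr$ applied to the transfinite cell-complex construction of $X\to LX$ is \emph{not} a transfinite composition of cobase changes of $W_\tau$-local equivalences along cofibrations in $\s\Pre^\pt(\scr C)$, and your argument does not close.

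This is precisely the difficulty that the paper's proof circumvents by invoking Voevodsky's theory of simplicial radditive functors: $u^\tr$ takes values in the subcategory of simplicial radditive functors on $\scr C_+$, and \cite[Theorem 4.20]{VV:srf} is the nontrivial input showing that, for such functors, hypothesis $(1)$ suffices to conclude that $u^\tr$ sends all $R^\tr W_\tau$-local equivalences to $W_\tau$-local equivalences. (The converse direction --- reflection of local equivalences --- is then easy and is argued in the paper exactly as you do, via fibrant replacement.) In short, your strategy of tracking the small object argument directly would only work if $u^\tr$ commuted with the relevant colimits, which it does not; some form of the radditivity argument, or an equivalent replacement, is needed.
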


\begin{proof}
	The implication $(2)\Rightarrow(1)$ is obvious, and the equivalence $(2)\Leftrightarrow(3)$ is essentially formal.
	Let us prove $(1)\Rightarrow(2)$. Assume (1) and let $f$ be a morphism in $\s\Pre^\tr(\scr C,R)$. Since the functor $u^\tr\colon\s\Pre^\tr(\scr C,R)\to\s\Pre^\pt(\scr C)$ takes values in the subcategory of simplicial radditive functors on $\scr C_+$, \cite[Theorem 4.20]{VV:srf} shows that, if $f$ is an $R^\tr W_\tau$-local equivalence, then $u^\tr(f)$ is an $W_\tau$-local equivalence. Conversely, suppose that $u^\tr(f)$ is an $W_\tau$-local equivalence. Choose a map $\tilde f$ between $R^\tr W_\tau$-local objects in $\s\Pre^\tr(\scr C,R)$ which is $R^\tr W_\tau$-locally equivalent to $f$. Since we already proved that $u^\tr$ preserve equivalences, $u^\tr(\tilde f)$ is $W_\tau$-locally equivalent to $u^\tr(f)$, and since the source and target of $u^\tr(\tilde f)$ are $W_\tau$-local, $u^\tr(\tilde f)$ is a projective equivalence. Therefore, $\tilde f$ is a projective equivalence, and hence $f$ is an $R^\tr W_\tau$-local equivalence as desired.
\end{proof}

\begin{definition}
	Let $\scr C\subset\Sch_S$ be an admissible category and let $R$ be a commutative ring. A topology $\tau$ on $\scr C$ is \emph{compatible with $R$-transfers} if the equivalent conditions of Lemma~\ref{lem:compatiblewithtransfers} are satisfied.
\end{definition}

This definition agrees with \cite[Definition 10.3.2]{CD}. By \cite[Proposition 10.3.3]{CD}, the Nisnevich topology is compatible with transfers on any admissible category $\scr C$, and by \cite[Proposition 10.4.8 and Proposition 10.3.17]{CD}, the $\cdh$-topology is compatible with transfers on $\Sch_S$. Both facts are also proved in \cite[\S1]{VV:EMspaces}.
 
An object of $\H^\pt(\scr C)$ is called \emph{$\A^1$-local} (\resp{} \emph{$\tau$-local}) if it is $W_{\A^1}$-local (\resp{} $W_\tau$-local) in the sense of \cite[Definition 3.1.4 (1) (a)]{Hirschhorn:2009}. Since $S$ is Noetherian and of finite Krull dimension, a simplicial presheaf $F$ is Nisnevich-local if and only if $F(\varnothing)$ is contractible and $F(Q)$ is homotopy cartesian for any cartesian square
\begin{equation*}
	Q=
	\begin{tikzpicture}
		\diagram{W & V \\ U & X \\};
		\arrows (11-) edge (-12) (11) edge (21) (21-) edge node[above]{$i$} (-22) (12) edge node[right]{$p$} (22);
	\end{tikzpicture}
\end{equation*}
in $\scr C$, where $i$ is an open immersion, $p$ is étale, and $p$ induces an isomorphism $Z\times_XV\cong Z$ for some closed complement $Z$ of $i(U)$; we will call such a square a \emph{Nisnevich square}. By the general principles of left Bousfield localization, $\H^\pt_{\Nis,\A^1}(\scr C)$ can be identified with the full subcategory of $\H^\pt(\scr C)$ spanned by the $\A^1$- and Nisnevisch-local objects. Similarly, $\H^\tr_{\Nis,\A^1}(\scr C,R)$ is the full subcategory of $\H^\tr(\scr C,R)$ spanned by the $\A^1$- and Nisnevich-local objects (\ie, those objects $F$ such that $u^\tr F$ is $\A^1$- and Nisnevich-local).

We note that the forgetful functor $u^\tr\colon\s\Pre^\tr(\scr C,R)\to\s\Pre^\pt(\scr C)$ detects motivic weak equivalences. Indeed, $u^\tr$ sends $R^\tr W_{\Nis}$ to equivalences since the Nisnevich topology is compatible with $R$-transfers, and it is an easy exercise to show that it also sends $R^\tr W_{\A^1}$ to $\A^1$-local equivalences (see \cite[Theorem 1.7]{VV:EMspaces}). As a result, we will also denote by $u^\tr$ the induced functor $\H^\tr_{\Nis,\A^1}(\scr C,R)\to\H^\pt_{\Nis,\A^1}(\scr C)$, right adjoint to $\L R^\tr$.

An inclusion of admissible categories $i\colon \scr C\into\scr D$ induces Quillen adjunctions $(i_!,i^\ast)$ on the model categories $\s\Pre^\pt(\ph)$ and $\s\Pre^\tr(\ph,R)$ with any of the model structures considered, where $i^\ast$ is the restriction functor. These adjunctions commute with the adjunctions~\eqref{eqn:adjunctions} since the right adjoints obviously commute. Since the functor $i_!$ is fully faithful, its derived functor $\L i_!$ will also be fully faithful provided that $i^\ast$ preserve weak equivalences. It is clear that $i^\ast$ always preserves $\A^1$-local equivalences. If $\tau$ is a topology on $\scr D$, then $i^\ast\colon\s\Pre^\pt(\scr D)\to\s\Pre^\pt(\scr C)$ preserves $\tau$-local equivalences if and only if $i$ is cocontinuous for $\tau$ \cite[III, Definition 2.1]{SGA4}. For $i^\ast\colon \s\Pre^\tr(\scr D,R)\to \s\Pre^\tr(\scr C,R)$ to preserve $R^\tr W_\tau$-local equivalences, we have the following criterion.

\begin{lemma}\label{lem:cocontinuousTransfers}
	Let $i\colon\scr C\into\scr D$ be an inclusion of admissible subcategories of $\Sch_S$, $\tau$ a topology on $\scr D$, and $R$ a commutative ring. Suppose that $i$ is cocontinuous for $\tau$ and that $\tau$ is compatible with $R$-transfers on $\scr D$. Then $\tau$ is also compatible with $R$-transfers on $\scr C$ and the restriction functor $i^\ast\colon\s\Pre^\tr(\scr D,R)\to\s\Pre^\tr(\scr C,R)$ preserves $R^\tr W_\tau$-local equivalences.
\end{lemma}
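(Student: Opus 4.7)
The plan is to prove the two assertions in sequence: first, that $\tau$ is compatible with $R$-transfers on $\scr C$; second, that $i^\ast$ preserves $R^\tr W_\tau$-local equivalences, the latter being a formal consequence of the former together with the cocontinuity hypothesis. The backbone of the argument is the observation that the forgetful and restriction functors commute on the nose: $u^\tr\circ i^\ast = i^\ast\circ u^\tr$, since both send $F\in\s\Pre^\tr(\scr D,R)$ to the simplicial presheaf $X\mapsto F(\Gamma(i(X)))$ on $\scr C$. Passing to left adjoints yields a natural isomorphism $R^\tr\circ i_!\simeq i_!\circ R^\tr$, and combined with full faithfulness of $i$ (giving $i^\ast i_!\simeq\id$) one has $i^\ast R^\tr i_!\simeq R^\tr$.

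To prove compatibility on $\scr C$, I will verify criterion~(1) of Lemma~\ref{lem:compatiblewithtransfers}: for any generator $f=(\scr X_+\to X_+)$ of $W_\tau$ on $\scr C$, the map $u^\tr R^\tr f$ must be a $W_\tau$-local equivalence in $\s\Pre^\pt(\scr C)$. Since $i$ is fully faithful, $i_!$ takes representables to representables, so $i_! f$ identifies with $(\scr X_+\to X_+)$ viewed in $\s\Pre^\pt(\scr D)$ and remains an element of $W_\tau$ on $\scr D$. Compatibility on $\scr D$ then gives that $u^\tr R^\tr(i_! f)$ is a $W_\tau$-local equivalence in $\s\Pre^\pt(\scr D)$. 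Applying $i^\ast$, which preserves $W_\tau$-local equivalences by cocontinuity of $i$, and using the commutations above, one obtains $i^\ast u^\tr R^\tr(i_! f)=u^\tr i^\ast R^\tr(i_! f)\simeq u^\tr R^\tr f$, which is therefore a $W_\tau$-local equivalence in $\s\Pre^\pt(\scr C)$.

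For the preservation assertion, let $f$ be an $R^\tr W_\tau$-local equivalence in $\s\Pre^\tr(\scr D,R)$. Criterion~(2) of Lemma~\ref{lem:compatiblewithtransfers} applied to the compatibility on $\scr D$ shows that $u^\tr f$ is a $W_\tau$-local equivalence; cocontinuity gives that $i^\ast u^\tr f=u^\tr i^\ast f$ is a $W_\tau$-local equivalence in $\s\Pre^\pt(\scr C)$; and criterion~(2) applied to the compatibility on $\scr C$ just established produces that $i^\ast f$ is an $R^\tr W_\tau$-local equivalence. The main obstacle is the step identifying $i_! f$ with an element of $W_\tau$ on $\scr D$, i.e., the assertion that a $\tau$-hypercover of an object of $\scr C$ remains a $\tau$-hypercover when viewed in $\scr D$; this is essentially built into the way $W_\tau$ on $\scr C$ inherits its meaning from the topology on $\scr D$, but should be spelled out carefully. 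Once it is in place, the rest of the argument is a formal diagram chase using the commutation $u^\tr i^\ast=i^\ast u^\tr$ and the two criteria of Lemma~\ref{lem:compatiblewithtransfers}.
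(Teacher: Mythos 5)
Your proof follows the same structure as the paper's: first use the identity $u^\tr i^\ast = i^\ast u^\tr$ together with full faithfulness of $i_!$ (yielding $u^\tr R^\tr \cong i^\ast\, u^\tr R^\tr\, i_!$) and cocontinuity of $i$ to verify criterion~(1) of Lemma~\ref{lem:compatiblewithtransfers} on $\scr C$, then deduce the second claim from the same identity and criterion~(2). The two arguments are essentially identical.

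One remark on the step you flag as the "main obstacle." You phrase it as needing $i_!f$ to literally remain an element of $W_\tau$ on $\scr D$, which is slightly stronger than what is required (the hypercover condition involves coskeleta computed in $\scr C$ versus $\scr D$, so some care is indeed warranted). In fact the argument only needs the weaker statement that $i_!f$ is a $W_\tau$-\emph{local equivalence} in $\s\Pre^\pt(\scr D)$: if so, then $R^\tr(i_!f)$ is an $R^\tr W_\tau$-local equivalence because $R^\tr$ is left Quillen, and criterion~(2) of Lemma~\ref{lem:compatiblewithtransfers} applied to the compatibility on $\scr D$ produces that $u^\tr R^\tr(i_!f)$ is a $W_\tau$-local equivalence. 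That $i_!f$ is a $W_\tau$-local equivalence in $\scr D$ follows by adjunction from the fact that $i^\ast$ sends $\tau$-local objects of $\s\Pre^\pt(\scr D)$ to $\tau$-local objects of $\s\Pre^\pt(\scr C)$, which holds because the $\tau$-topology on $\scr C$ is induced from $\scr D$ (continuity of $i$). The paper elides this point in exactly the same way, asserting directly that $u^\tr R^\tr i_!$ sends $\tau$-hypercovers in $\scr C$ to $\tau$-local equivalences on $\scr D$, so your caution is reasonable but there is no gap that the paper fills and you do not.
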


\begin{proof}
	For the first claim we must verify that $u^\tr R^\tr\colon\s\Pre^\pt(\scr C)\to\s\Pre^\pt(\scr C)$ sends $\tau$-hypercovers in $\scr C$ to $\tau$-local equivalences. Since the functors $i_!$ are fully faithful, we have
	\[u^\tr R^\tr\cong u^\tr i^\ast i_! R^\tr \cong i^\ast u^\tr R^\tr i_!.\]
	Now $u^\tr R^\tr i_!$ sends $\tau$-hypercovers in $\scr C$ to $\tau$-local equivalences on $\scr D$ by the assumption that $\tau$ is compatible with $R$-transfers on $\scr D$, and since $i$ is cocontinuous, $i^\ast$ preserves $\tau$-local equivalences. The second claim follows from the isomorphism $i^\ast u^\tr\cong u^\tr i^\ast$ and Lemma~\ref{lem:compatiblewithtransfers} (2).
\end{proof}

The Nisnevich topology satisfies the hypotheses of Lemma~\ref{lem:cocontinuousTransfers} for any $i\colon\scr C\into\scr D$, so that the functors $i^\ast$ preserve motivic weak equivalences. In particular, the derived functors $\L i_!\colon\H^\pt_{\Nis,\A^1}(\scr C)\to\H^\pt_{\Nis,\A^1}(\scr D)$ and $\L i_!\colon \H^\tr_{\Nis,\A^1}(\scr C,R)\to\H^\tr_{\Nis,\A^1}(\scr D,R)$ are fully faithful.

We now turn to the stable theory. As $(\G_m, 1)$ is not projectively cofibrant, we choose a projectively cofibrant replacement $\G$ ({\cf} \cite[Section 2.2]{RO2}). If $p\geq q\geq 0$, define as usual \[S^{p,q}=(S^1)^{\wedge p-q}\wedge \G^{\wedge q}\in\s\Pre^\pt(\scr C).\]
According to the general principles of \cite{Hovey:2001}, the motivic model structures induce symmetric monoidal stable model structures on the category \[\Spt(\scr C)=\Sp^\Sigma(\s\Pre^\pt(\scr C),S^{2,1})\] of symmetric $S^{2,1}$-spectra and on the category \[\Spt^\tr(\scr C,R)=\Sp^\Sigma(\s\Pre^\tr(\scr C,R),R^\tr S^{2,1})\] of symmetric $R^\tr S^{2,1}$-spectra. The fibrant objects are the levelwise motivically fibrant spectra $(E_0,E_1,\dotsc)$ such that the maps $E_i\to\Omega^{2,1}E_{i+1}$ adjoint to the bonding maps are motivic weak equivalences (\cite[Definition 14]{RO2}). The associated homotopy categories will be denoted $\SH(\scr C)$ and $\DM(\scr C,R)$, respectively.

There is a commutative diagram of symmetric monoidal Quillen adjunctions
\begin{tikzequation}\label{eqn:transfers}
	\diagram{
	\s\Pre^\pt(\scr C) & \s\Pre^\tr(\scr C,R) \\
	\Spt(\scr C) & \Spt^\tr(\scr C,R) \\
	};
	\arrows
	(11) edge[vshift=\dbl] node[above=\dbl]{$R^\tr$} (12) edge[<-,vshift=-\dbl] node[below=\dbl]{$u^\tr$} (12)
	(21) edge[vshift=\dbl] node[above=\dbl]{$R^\tr$} (22) edge[<-,vshift=-\dbl] node[below=\dbl]{$u^\tr$} (22)
	(11) edge[vshift=-\dbl] node[left=\dbl]{$\Sigma^\infty$} (21) edge[<-,vshift=\dbl] node[right=\dbl]{$\Omega^\infty$} (21)
	(12) edge[vshift=-\dbl] node[left=\dbl]{$\Sigma^\infty$} (22) edge[<-,vshift=\dbl] node[right=\dbl]{$\Omega^\infty$} (22);
\end{tikzequation}
where the functors in the lower row are defined levelwise. We will denote by $\1$ the motivic sphere spectrum $\Sigma^\infty S_+$ in $\Spt(\scr C)$. When there is no danger of confusion, we will sometimes omit $\Sigma^\infty$ from the notations.

We make the following observation which is lacking from \cite[\S2]{RO2}.

\begin{lemma}\label{lem:utrstable}
	The functor $u^\tr\colon\Spt^\tr(\scr C,R)\to\Spt(\scr C)$ detects stable motivic weak equivalences.
\end{lemma}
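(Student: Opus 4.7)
The plan is to reduce the stable detection statement to the unstable detection statement established in the paragraph preceding the lemma, namely that $u^\tr\colon\s\Pre^\tr(\scr C,R)\to\s\Pre^\pt(\scr C)$ both preserves and reflects motivic weak equivalences. The key observation is that between stably fibrant spectra, the stable motivic weak equivalences coincide with the levelwise motivic weak equivalences, so once I argue that $u^\tr$ sends stably fibrant spectra to stably fibrant spectra and is compatible with stable fibrant replacement, the statement will follow by applying the unstable detection level by level.

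The main technical ingredient will be that $u^\tr$ commutes with $\Omega^{2,1}$. Since $R^\tr$ is strong symmetric monoidal with $R^\tr S^{2,1}$ serving as the stabilizing object on the transfer side, the adjunction supplies a natural isomorphism
\[u^\tr\underline\Hom(R^\tr S^{2,1},E)\cong\underline\Hom(S^{2,1},u^\tr E)\]
for every $E\in\s\Pre^\tr(\scr C,R)$. Combined with the fact that $u^\tr$ preserves motivically fibrant presheaves and motivic weak equivalences at each level (unstably), this will ensure that $u^\tr$ carries stably fibrant spectra to stably fibrant spectra. The same strong monoidality identifies $(R^\tr,u^\tr)$ as a Quillen adjunction for the stable motivic model structures.

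To conclude, given $f\colon E\to F$ in $\Spt^\tr(\scr C,R)$ with $u^\tr(f)$ a stable motivic weak equivalence, I will take functorial stable motivic fibrant replacements to form $f^{\mathrm{fib}}\colon E^{\mathrm{fib}}\to F^{\mathrm{fib}}$. Since the standard stable fibrant replacement can be expressed as a filtered homotopy colimit of $\Omega^{2k,k}$ applied to levelwise motivic fibrant replacements, and $u^\tr$ commutes with each of these operations (the compatibility with filtered colimits uses that forgetting transfers on presheaves of $R$-modules preserves filtered colimits), $u^\tr$ will preserve stable motivic weak equivalences. Hence $u^\tr(f^{\mathrm{fib}})$ will be a stable motivic weak equivalence between stably fibrant spectra, and therefore a levelwise motivic weak equivalence. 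The unstable detection will then yield that $f^{\mathrm{fib}}$ is a levelwise motivic weak equivalence, hence a stable motivic weak equivalence, and therefore so is $f$. The only potentially nontrivial verifications are the commutation of $u^\tr$ with $\Omega^{2,1}$ and with filtered colimits, both of which are immediate from the structure of the adjunction, so I anticipate no substantive obstacle.
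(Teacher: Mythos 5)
Your proof is correct and follows essentially the same strategy as the paper's. The paper introduces the stabilization functor $Q^\infty$ (with $(QE)_n=\Hom(S^{2,1},E_{n+1})$), uses the characterization that $f$ is a stable equivalence if and only if $Q^\infty(f)$ is a levelwise motivic equivalence, and then finishes by observing $u^\tr Q^\infty_\tr\cong Q^\infty u^\tr$ together with the already-established unstable detection; your route through stably fibrant replacements is just a repackaging of this same $Q^\infty$ argument, and it hinges on exactly the same two facts (commutation of $u^\tr$ with $\Omega^{2,1}$ via strong monoidality of $R^\tr$, and commutation of $u^\tr$ with filtered colimits).
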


\begin{proof}
It detects levelwise motivic equivalences since $u^\tr\colon\s\Pre^\tr(\scr C,R)\to\s\Pre^\pt(\C)$ detects motivic equivalences. 
Define a functor $Q\colon\Spt(\scr C)\to\Spt(\scr C)$ by $(QE)_n=\Hom(S^{2,1}, E_{n+1})$ (with action of $\Sigma_n$ induced by that of $\Sigma_{n+1}$), 
and let $Q^\infty E=\colim_{n\to\infty}Q^nE$. 
Similarly, 
let $Q^\tr\colon\Spt^\tr(\scr C,R)\to\Spt^\tr(\scr C,R)$ be given by $(Q^\tr E)_n=\Hom(R^\tr S^{2,1}, E_{n+1})$. 
Then a morphism $f$ in $\Spt(\scr C)$ (\resp{} in $\Spt^\tr(\scr C,R)$) is a stable motivic equivalence if and only if $Q^\infty(f)$ (\resp{} $Q^\infty_\tr(f)$) is a 
levelwise motivic equivalence. 
The proof is completed by noting that $u^\tr Q^\infty_\tr\cong Q^\infty u^\tr$.
\end{proof}

As a result, we simply denote by $u^\tr\colon\DM(\scr C,R)\to\SH(\scr C)$ the induced functor on homotopy categories.

\subsection{Change of base scheme}
\label{sub:basechange}

In what follows we suppose given a class $\scr C$ of morphisms in the category of base schemes which is stable under base change and such that, for any base scheme $S$, the category $\scr C_S$ of $S$-schemes whose structure map is in $\scr C$ forms an admissible category. Denote by $\scr E(\ph)$ any of the model categories $\s\Pre^\pt(\ph)$, $\s\Pre^\tr(\ph,R)$, $\Spt(\ph)$, $\Spt^\tr(\ph,R)$. As $S$ varies, $\scr E(\scr C_S)$ is then a monoidal $\scr C$-fibered model category over the category of base schemes, in the sense of \cite[\S1.3.d]{CD}. In particular, a morphism of base schemes $f\colon T\to S$ induces a symmetric monoidal Quillen adjunction
\[f^\ast\colon \scr E(\scr C_S)\rightleftarrows \scr E(\scr C_T):f_\ast,\]
where $f^\ast$ is induced by the base change functors $f^\ast\colon\scr C_S\to\scr C_T$ and $f^\ast\colon\Cor(\scr C_S)\to\Cor(\scr C_T)$.
If $f$ belongs to $\scr C_S$, there is also a Quillen adjunction
\[f_\sharp\colon \scr E(\scr C_T)\rightleftarrows \scr E(\scr C_S):f^\ast,\]
where $f_\sharp$ is induced by the forgetful functors $f_\sharp\colon\scr C_T\to\scr C_S$ and $f_\sharp\colon\Cor(\scr C_T)\to\Cor(\scr C_S)$.
The $\scr C$-fibered structure of $\scr E(\scr C)$ is moreover compatible with all the standard adjunctions between the various choices of $\scr E$, in the sense that the functors $f^\ast$ and $f_\sharp$ commute with all left adjoints. For the adjunction $(R^\tr,u^\tr)$, this follows from the commutativity of the squares
\[
\begin{tikzpicture}
	\diagram{\scr C_S & \Cor(\scr C_S) \\ \scr C_T & \Cor(\scr C_T) \\};
	\arrows (11-) edge node[above]{$\Gamma$} (-12) (11) edge node[left]{$f^\ast$} (21) (21-) edge node[below]{$\Gamma$} (-22) (12) edge node[right]{$f^\ast$} (22);
\end{tikzpicture}
\qquad\text{and}\qquad
\begin{tikzpicture}
	\diagram{\C_T & \Cor(\scr C_T) \\ \C_S & \Cor(\scr C_S) \\};
	\arrows (11-) edge node[above]{$\Gamma$} (-12) (11) edge node[left]{$f_\sharp$} (21) (21-) edge node[below]{$\Gamma$} (-22) (12) edge node[right]{$f_\sharp$} (22);
\end{tikzpicture}
\]
from \cite[Lemmas 9.3.3 and 9.3.7]{CD}. For the other adjunctions it is obvious.

Define a new class of morphisms $\widehat{\scr C}$ as follows. A map of base schemes $T\to S$ belongs to $\widehat{\scr C}$ if it is the limit of a cofiltered diagram $(T_\alpha)$ in $\scr C_S$ whose transition maps $T_\beta\to T_\alpha$ are affine and dominant (the dominance condition is needed in the proof of Lemma~\ref{lem:scontinuity} (2) below). If $U$ is any $T$-scheme of finite type, then by \cite[Théorème 8.8.2]{EGA4-3} it is the limit of a diagram of schemes of finite type $(U_\alpha)$ over the diagram $(T_\alpha)$. Moreover, if the morphism $U\to T$ is either
\begin{itemize}
	\item separated,
	\item smooth or étale,
	\item an open immersion or a closed immersion,
\end{itemize}
then we can choose each $U_\alpha\to T_\alpha$ to have the same property (this follows from \cite[Proposition 8.10.4]{EGA4-3}, \cite[Proposition 17.7.8]{EGA4-4}, and \cite[Proposition 8.6.3]{EGA4-3}, respectively). We shall assume that this is also true for $\scr C$-morphisms: we require any $U\in\scr C_T$ to be the limit of schemes $U_\alpha$ with $U_\alpha\in\scr C_{T_\alpha}$ (by the above, this holds if $\scr C=\Sm$). It is then clear that $\widehat{\C}$ is closed under composition.

From now on we fix a $\widehat{\scr C}$-morphism of base schemes $f\colon T\to S$, 
the cofiltered limit of $\scr C$-morphisms $f_\alpha\colon T_\alpha\to S$.

\begin{lemma}\label{lem:scontinuity}
	Consider the categories $\s\Pre^\pt$ and $\s\Pre^\tr$ with the \emph{schemewise} model structures. Let $X\in\C_T$ be a cofiltered limit of schemes $X_\alpha$ in $\C_{T_\alpha}$ and let $p\geq q\geq 0$.
	\begin{enumerate}
		\item For any $F\in\s\Pre^\pt(\scr C_S)$, the canonical map \[\hocolim_\alpha \R\Map(\Sigma^{p,q}(X_\alpha)_+,\L f_\alpha^\ast F)\to \R\Map(\Sigma^{p,q}X_+,\L f^\ast F)\] is an equivalence.
		\item For any $F\in\s\Pre^\tr(\scr C_S,R)$, the canonical map \[\hocolim_\alpha \R\Map(\L R^\tr \Sigma^{p,q}(X_\alpha)_+,\L f_\alpha^\ast F)\to \R\Map(\L R^\tr \Sigma^{p,q}X_+,\L f^\ast F)\] is an equivalence.
	\end{enumerate}
\end{lemma}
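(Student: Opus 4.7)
My plan is to exploit the $\scr C$-fibered structure to transport the LHS to mapping spaces over $S$, then reduce to the classical continuity of morphism sets under cofiltered limits \cite[Théorème~8.8.2]{EGA4-3}. Since each $f_\alpha$ belongs to $\scr C$, we have the adjunction $(f_\alpha)_\sharp \dashv f_\alpha^\ast$, and on representables $(f_\alpha)_\sharp$ is the forgetful functor composing with $f_\alpha$. Thus
\[
\R\Map_{T_\alpha}\!\bigl(\Sigma^{p,q}(X_\alpha)_+,\, \L f_\alpha^\ast F\bigr) \simeq \R\Map_S\!\bigl(\Sigma^{p,q}(X_\alpha)_+,\, F\bigr),
\]
and analogously after $\L R^\tr$ for part~(2); the LHS then becomes $\hocolim_\alpha \R\Map_S(\Sigma^{p,q}(X_\alpha)_+, F)$. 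The map $f$ itself is only in $\widehat{\scr C}$, so no $f_\sharp$ is available and I would instead directly analyse $\L f^\ast F$ on $\scr C_T$. Since $\G$ is a fixed cofibrant replacement of $\bb{G}_m$ over $S$ which base-changes compatibly, and since $X \times_T (\bb{G}_m)_T^{\wedge q} = \lim_\alpha X_\alpha \times_{T_\alpha} (\bb{G}_m)_{T_\alpha}^{\wedge q}$ is again an affine cofiltered limit of the same shape, an iterated homotopy-fiber unwinding of the smash with $\G^{\wedge q} \wedge S^{p-q}$ reduces the assertion to the following unsuspended statement: for any cofibrant $G \in \s\Pre^\pt(\scr C_S)$ (resp.\ in $\s\Pre^\tr(\scr C_S,R)$) and any $W \in \scr C_T$ with $W = \lim_\alpha W_\alpha$ and $W_\alpha \in \scr C_{T_\alpha}$,
\[
(\L f^\ast G)(W) \simeq \hocolim_\alpha G(W_\alpha),
\]
where $W_\alpha$ is viewed as an $S$-scheme via composition.

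I would prove this reduced claim by cellular induction on $G$. For the base case $G = Y_+$ with $Y \in \scr C_S$: $\L f^\ast Y_+ = (Y \times_S T)_+$ and $(\L f^\ast Y_+)(W) = \Hom_T(W, Y \times_S T) = \Hom_S(W, Y)$. Applying \cite[Théorème~8.8.2]{EGA4-3} to the finitely presented $Y$ and the affine-dominant cofiltered limit $W = \lim_\alpha W_\alpha$ gives $\Hom_S(W, Y) = \colim_\alpha \Hom_S(W_\alpha, Y)$, exactly the required colimit. For part~(2) the same argument applies with $R^\tr Y_+$, using the analogous continuity of the group of finite $S$-correspondences $c_0(W \times_S Y/W, R) = \colim_\alpha c_0(W_\alpha \times_S Y/W_\alpha, R)$, a standard consequence of the continuity of relative cycles (cf.\ \cite[\S8.1]{CD} or \cite[\S4]{SV:relativecycles}). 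A general cofibrant $G$ arises as a filtered colimit of finite cellular extensions of such representables, and the inductive step goes through because both sides commute with cellular pushouts and with filtered colimits of Kan complexes.

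The main obstacle is the bookkeeping in the reduction to the pointwise comparison: one must iterate the identification $\R\Map(\G \wedge A, H) \simeq \mathrm{hofib}\bigl(H(\bb{G}_m \times A) \to H(A)\bigr)$ for schemewise fibrant $H$, and verify compatibility with the cofiltered homotopy colimits on both sides. Once that is set up, the core statement is a direct application of the classical finite-presentation continuity of morphism sets and of finite correspondences.
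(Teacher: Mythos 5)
Your argument is correct and follows essentially the same route as the paper: reduce to $p=q=0$ by decomposing $\Sigma^{p,q}(\ph)_+$ into finite homotopy (co)limits of $(\G_m^{\times n}\times\ph)_+$ compatibly with base change, then reduce $F$ to a representable $Y_+$ (resp.\ $R^\tr Y_+$) using that both sides preserve homotopy colimits in $F$, and conclude by continuity of morphism sets \cite[Th\'eor\`eme~8.8.2]{EGA4-3} resp.\ of finite correspondences. The only cosmetic differences are that you spell out the cellular/filtered induction which the paper compresses into ``both sides preserve homotopy colimits in $F$,'' and that the precise reference for continuity of correspondences is \cite[Proposition~9.3.9]{CD} rather than \cite[\S8.1]{CD}.
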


\begin{proof}
	Note that $\Sigma^{p,q}X_+$ can be obtained from copies of $(\G_{\mathfrak{m}}^{\times n}\times X)_+$ using finite homotopy colimits in a way which is compatible with base change. Since filtered homotopy colimits commute with finite homotopy limits, we can assume that $p=q=0$. Both sides then preserve homotopy colimits in $F$, so we may further assume that $F=Y_+$ (\resp{} that $F=\L R^\tr Y_+$) where $Y\in\C_S$. Then $\L f^\ast F$ is represented by $Y\times_ST$ and the claim follows from \cite[Théorème 8.8.2]{EGA4-3} (\resp{} from \cite[Proposition 9.3.9]{CD}).
\end{proof}

We now make the following observations.
\begin{itemize}
	\item Any trivial line bundle in $\scr C_T$ is the cofiltered limit of trivial line bundles in $\C_{T_\alpha}$.
	\item Any Nisnevich square in $\scr C_T$ is the cofiltered limit of Nisnevich squares in $\C_{T_\alpha}$.
\end{itemize}
The former is obvious. Any Nisnevich square in $\C_T$ is a cofiltered limit of cartesian squares
\begin{tikzmath}
	\diagram{W_\alpha & V_\alpha \\ U_\alpha & X_\alpha \\};
	\arrows (11-) edge (-12) (11) edge (21) (21-) edge node[above]{$i_\alpha$} (-22) (12) edge node[right]{$p_\alpha$} (22);
\end{tikzmath}
in $\C_{T_\alpha}$, where $i_\alpha$ is an open immersion and $p_\alpha$ is étale. Let $Z_\alpha$ be the reduced complement of $i_\alpha(U_\alpha)$ in $X_\alpha$. It remains to shows that $Z_\alpha\times_{X_\alpha}V_\alpha\to Z_\alpha$ is eventually an isomorphism. By \cite[Corollaire 8.8.2.5]{EGA4-3}, it suffices to show that $Z= \lim_\alpha Z_\alpha$ as closed subschemes of $X$. Now $\lim_\alpha Z_\alpha\cong Z_\alpha\times_{X_\alpha} X$ for large $\alpha$, and so $\lim_\alpha Z_\alpha$ is a closed subscheme of $X$ with the same support as $Z$. Moreover, it is reduced by \cite[Proposition 8.7.1]{EGA4-3}, so it coincides with $Z$.

\begin{lemma}\label{lem:A1Nislocal}
	The functors $\L f^\ast\colon\H^\pt(\scr C_S)\to\H^\pt(\scr C_T)$ and $\L f^\ast\colon\H^\tr(\scr C_S,R)\to\H^\tr(\scr C_T,R)$ preserve $\A^1$-local objects and Nisnevich-local objects.
\end{lemma}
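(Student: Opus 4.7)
The plan is to reduce, via the continuity Lemma~\ref{lem:scontinuity}, to the corresponding statement for each $f_\alpha\colon T_\alpha \to S$ in the presenting cofiltered system. Since $f_\alpha$ lies in $\scr C$, we have the extra adjunction $(f_{\alpha\sharp}, f_\alpha^\ast)$, and the base case of the reduction should follow directly from the left Quillen properties of $f_{\alpha\sharp}$.

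First I would show that for each fixed $\alpha$, $f_\alpha^\ast$ preserves $\A^1$-local and Nisnevich-local objects. The left adjoint $f_{\alpha\sharp}$ sends the generating $\A^1$-equivalences $(\A^1_{T_\alpha}\times X)_+ \to X_+$ (with $X\in \scr C_{T_\alpha}$) to $\A^1$-equivalences in $\scr E(\scr C_S)$, since $\A^1_{T_\alpha}\times_{T_\alpha} X$ coincides with $\A^1_S \times_S X$ when one forgets the $T_\alpha$-structure; and it sends Nisnevich squares in $\scr C_{T_\alpha}$ to Nisnevich squares in $\scr C_S$, because the properties of being an open immersion, being étale, and inducing an isomorphism on reduced complements are all retained. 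Hence $f_{\alpha\sharp}$ is a left Quillen functor for both the $\A^1$- and Nisnevich-localized model structures, so its right adjoint $f_\alpha^\ast$ preserves the corresponding local objects. The same verification works in the transfer setting with $R^\tr W_{\A^1}$ and $R^\tr W_\Nis$.

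Second, I would pass to the limit. For $F\in\s\Pre^\pt(\scr C_S)$ that is $\A^1$-local, to see that $\L f^\ast F$ is $\A^1$-local one must check that
\[\R\Map(\Sigma^{p,q}(\A^1_T \times X)_+, \L f^\ast F) \to \R\Map(\Sigma^{p,q} X_+, \L f^\ast F)\]
is an equivalence for every $X\in \scr C_T$ and $p\geq q\geq 0$. Writing $X=\lim_\alpha X_\alpha$ with $X_\alpha\in\scr C_{T_\alpha}$, so that $\A^1_T\times X=\lim_\alpha \A^1_{T_\alpha}\times X_\alpha$, Lemma~\ref{lem:scontinuity}(1) rewrites both sides as filtered homotopy colimits over $\alpha$ of the corresponding mapping spaces involving $\L f_\alpha^\ast F$. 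By the first step each term of the colimit is already an equivalence, so the conclusion is immediate. For Nisnevich-locality the argument is identical, using the observation recorded just above the statement that every Nisnevich square in $\scr C_T$ is a cofiltered limit of Nisnevich squares in the $\scr C_{T_\alpha}$; Lemma~\ref{lem:scontinuity}(1) applied vertex-by-vertex reduces homotopy-cartesianness over $\L f^\ast F$ to the same property over each $\L f_\alpha^\ast F$, and filtered homotopy colimits commute with finite homotopy limits. The transfer version runs identically with Lemma~\ref{lem:scontinuity}(2).

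I do not anticipate any real obstacle: this is a straightforward ``continuity plus base case'' reduction, and both inputs have already been assembled in the paper. The only mild bookkeeping concerns matching derived and underived functors, but Lemma~\ref{lem:scontinuity} is already phrased in terms of $\L f^\ast$, so the formulas go through without further work.
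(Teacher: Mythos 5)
Your proof is correct and follows essentially the same route as the paper's: establish the base case for each $f_\alpha\in\scr C$ via the Quillen left adjoint $f_{\alpha\sharp}$ (using that $f_{\alpha\sharp}$ carries generating $\A^1$-equivalences and Nisnevich squares over $T_\alpha$ to ones over $S$), then pass to the cofiltered limit using the two observations preceding the lemma together with Lemma~\ref{lem:scontinuity}. The only stray detail is the direction of the arrow in your displayed mapping space (restriction along $(\A^1_T\times X)_+\to X_+$ goes the other way) and the superfluous $\Sigma^{p,q}$ (only $p=q=0$ is needed), but neither affects the argument.
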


\begin{proof}
	If $f$ is in $\C$ this follows from the existence of the Quillen left adjoint $f_\sharp$ to $f^\ast$ and the observation that $f_\sharp$ sends trivial line bundles to trivial line bundles and Nisnevish squares to Nisnevich squares. Thus, each $\L f_\alpha^\ast$ preserves $\A^1$-local objects and Nisnevich-local objects. Since any trivial line bundle (\resp{} Nisnevich square) over $T$ is a cofiltered limit of trivial line bundles (\resp{} Nisnevich squares) over $T_\alpha$, Lemma~\ref{lem:scontinuity} shows that $\L f^\ast$ preserves $\A^1$-local objects and Nisnevich-local objects in general.
\end{proof}

\begin{lemma}\label{lem:continuity}
	Consider the categories $\s\Pre^\pt$ and $\s\Pre^\tr$ with the \emph{motivic} model structures. Let $X\in\C_T$ be a cofiltered limit of schemes $X_\alpha$ in $\C_{T_\alpha}$ and let $p\geq q\geq 0$.
	\begin{enumerate}
		\item For any $F\in\s\Pre^\pt(\scr C_S)$, the canonical map \[\hocolim_\alpha \R\Map(\Sigma^{p,q}(X_\alpha)_+,\L f_\alpha^\ast F)\to \R\Map(\Sigma^{p,q}X_+,\L f^\ast F)\] is an equivalence.
		\item For any $F\in\s\Pre^\tr(\scr C_S,R)$, the canonical map \[\hocolim_\alpha \R\Map(\L R^\tr \Sigma^{p,q}(X_\alpha)_+,\L f_\alpha^\ast F)\to \R\Map(\L R^\tr \Sigma^{p,q}X_+,\L f^\ast F)\] is an equivalence.
	\end{enumerate}
\end{lemma}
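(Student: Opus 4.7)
The plan is to reduce Lemma~\ref{lem:continuity} to its schemewise analogue Lemma~\ref{lem:scontinuity} by passing to a motivic fibrant replacement of $F$. First, I would replace $F$ by a motivic fibrant object in $\s\Pre^\pt(\scr C_S)$ (\resp{} in $\s\Pre^\tr(\scr C_S,R)$); since the claim only involves the derived functors $\L f_\alpha^\ast$ and $\L f^\ast$, both sides of the asserted equivalence are invariant under this replacement. After this reduction, Lemma~\ref{lem:A1Nislocal} tells us that each $\L f_\alpha^\ast F$ and $\L f^\ast F$ is $\A^1$-local and Nisnevich-local on its respective base scheme.

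The next key observation is that for any object $G$ which is $\A^1$-local and Nisnevich-local, the derived mapping space $\R\Map_{\mathrm{mot}}(A, G)$ computed in the motivic model structure agrees with the derived mapping space $\R\Map_{\mathrm{sw}}(A, G)$ computed in the schemewise model structure. This is a formal property of left Bousfield localization \cite{Hirschhorn:2009}: a schemewise fibrant replacement $G \to G'$ is in particular a schemewise (hence motivic) weak equivalence, and $G'$ remains $\A^1$- and Nisnevich-local because this property is invariant under schemewise equivalences, so $G'$ is in fact a motivic fibrant replacement of $G$. Both sides of the desired equivalence in parts (1) and (2) can therefore be computed in the schemewise model structures.

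At this point the statement reduces verbatim to the corresponding parts of Lemma~\ref{lem:scontinuity}, and the filtered homotopy colimit on the left is respected because the comparison between $\R\Map_{\mathrm{mot}}$ and $\R\Map_{\mathrm{sw}}$ is natural in $\alpha$. The substantive content of the proof is concentrated in Lemma~\ref{lem:A1Nislocal}: that result promotes the preservation of $\A^1$- and Nisnevich-local objects from the $\scr C$-morphisms $f_\alpha$, where it is immediate from the existence of a left adjoint $f_{\alpha\sharp}$ respecting trivial line bundles and Nisnevich squares, to the $\widehat{\scr C}$-morphism $f$, where it requires the schemewise continuity of Lemma~\ref{lem:scontinuity} together with the fact that trivial line bundles and Nisnevich squares over $T$ are cofiltered limits of such objects over the $T_\alpha$. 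Once Lemma~\ref{lem:A1Nislocal} is in hand, the present lemma is essentially formal; no new geometric input is required.
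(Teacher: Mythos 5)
Your proposal is correct and is essentially the approach the paper takes; the paper's proof of Lemma~\ref{lem:continuity} is simply ``Combine Lemmas~\ref{lem:scontinuity} and~\ref{lem:A1Nislocal},'' and your argument fills in exactly the expected detail (that $\L f^\ast$ and $\L f_\alpha^\ast$ are computed on cofibrant objects so agree in the two model structures, that the localized and unlocalized $\R\Map$ into an $\A^1$- and Nisnevich-local target coincide, and that this comparison is natural in $\alpha$).
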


\begin{proof}
	Combine Lemmas \ref{lem:scontinuity} and~\ref{lem:A1Nislocal}.
\end{proof}

It is now easy to deduce a stable version of Lemma~\ref{lem:continuity}. Recall that objects in $\SH(\C_S)$ and $\DM(\C_S,R)$ can be modeled by $\Omega$-spectra, \ie, sequences $(E_0,E_1,\dotsc)$ of $\A^1$- and Nisnevich-local objects $E_i$ with motivic weak equivalences $E_i\simeq\Omega^{2,1}E_{i+1}$. If $E\in\SH(\C_S)$ is represented by the $\Omega$-spectrum $(E_0,E_1,\dotsc)$ and $X\in\H^\pt_{\Nis,\A^1}(\C_S)$, we have
\begin{equation}\label{eqn:Omega}
	[\Sigma^{p,q}\Sigma^\infty X,E]\cong[\Sigma^{p+2r,q+r}X,E_r]
\end{equation}
for any $r\geq 0$ such that $p+2r\geq q+r\geq 0$, and similarly if $E\in\DM(\C_S,R)$ and $X\in\H^\tr_{\Nis,\A^1}(\C_S,R)$.

If $f$ is a $\scr C$-morphism, then the existence of the Quillen left adjoint $f_\sharp$ to $f^\ast$ shows that $\L f^\ast$ commutes with the unstable bigraded loop functors $\R\Hom(S^{p,q},\ph)$. An easy application of Lemma~\ref{lem:continuity} then shows that this is still true if $f$ is a $\widehat{\C}$-morphism. 
Thus, we can explicitly describe the base change functors
\[\L f^\ast\colon\SH(\C_S)\to\SH(\C_T)\quad\text{and}\quad \L f^\ast\colon\DM(\C_S,R)\to\DM(\C_T,R)\]
by sending an $\Omega$-spectrum $(E_0,E_1,\dotsc)$ to the $\Omega$-spectrum $(\L f^\ast E_0,\L f^\ast E_1,\dotsc)$. 
From~\eqref{eqn:Omega} and Lemma~\ref{lem:continuity} we obtain the following result.

\begin{lemma}\label{lem:stablecontinuity}
	Let $X\in\C_T$ be a cofiltered limit of schemes $X_\alpha$ in $\C_{T_\alpha}$ and let $p,q\in\Z$.
	\begin{enumerate}
		\item For any $E\in\SH(\C_S)$, $[\Sigma^{p,q}\Sigma^\infty X_+,\L f^\ast E]\cong\colim_\alpha[\Sigma^{p,q}\Sigma^\infty(X_\alpha)_+,\L f_\alpha^\ast E]$.
		\item For any $E\in\DM(\C_S,R)$, $[\L R^\tr\Sigma^{p,q}\Sigma^\infty X_+,\L f^\ast E]\cong\colim_\alpha[\L R^\tr\Sigma^{p,q}\Sigma^\infty(X_\alpha)_+,\L f_\alpha^\ast E]$.
	\end{enumerate}
\end{lemma}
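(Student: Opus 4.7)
The plan is to reduce the stable statement to the unstable Lemma~\ref{lem:continuity} via the $\Omega$-spectrum description of the base change functors $\L f^\ast$ and $\L f_\alpha^\ast$ summarized in the paragraph preceding the lemma.

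I would proceed as follows. Choose an integer $r\geq 0$ large enough that $p+2r\geq q+r\geq 0$, and choose an $\Omega$-spectrum model $(E_0,E_1,\dotsc)$ of $E$ in $\SH(\C_S)$. The discussion before the statement shows that $\L f^\ast E$ is represented by the $\Omega$-spectrum $(\L f^\ast E_0,\L f^\ast E_1,\dotsc)$, and similarly each $\L f_\alpha^\ast E$ is represented by $(\L f_\alpha^\ast E_0, \L f_\alpha^\ast E_1,\dotsc)$ (this uses that $\L f^\ast$ and each $\L f_\alpha^\ast$ commute with $\R\Hom(S^{p,q},\ph)$, which for $f$ was deduced from Lemma~\ref{lem:continuity} and for each $f_\alpha$ from the existence of $f_{\alpha\sharp}$). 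Applying the identity~\eqref{eqn:Omega} at level $r$ identifies both sides of the statement with unstable hom-groups:
\[
[\Sigma^{p,q}\Sigma^\infty X_+,\L f^\ast E]\cong [\Sigma^{p+2r,q+r}X_+,\L f^\ast E_r],
\]
and analogously at each stage $\alpha$.

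Now apply Lemma~\ref{lem:continuity}~(1) to $F=E_r\in\s\Pre^\pt(\C_S)$ (in the motivic model structure) to get a weak equivalence
\[
\hocolim_\alpha \R\Map(\Sigma^{p+2r,q+r}(X_\alpha)_+,\L f_\alpha^\ast E_r)\simeq \R\Map(\Sigma^{p+2r,q+r}X_+,\L f^\ast E_r).
\]
Taking $\pi_0$ and using that filtered homotopy colimits commute with $\pi_0$ yields assertion (1). Assertion (2) is obtained by exactly the same argument, using the $\Omega$-spectrum description in $\DM(\C_S,R)$ and Lemma~\ref{lem:continuity}~(2) in place of~(1).

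There is no real obstacle here once the preceding setup is in place: the only points that require care are that $\L f^\ast$ genuinely commutes with the loop functor $\R\Hom(S^{2,1},\ph)$ (so that the levelwise formula for $\L f^\ast$ on $\Omega$-spectra is valid), and that the index $r$ can be chosen uniformly in $\alpha$; both are immediate.
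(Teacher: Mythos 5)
Your proposal is correct and follows exactly the route the paper intends: the paper dispatches the lemma with the one-line remark ``From~\eqref{eqn:Omega} and Lemma~\ref{lem:continuity} we obtain the following result,'' and your write-up simply fills in the details of that reduction, including the correct attribution of why $\L f^\ast$ and $\L f_\alpha^\ast$ commute with $\R\Hom(S^{p,q},\ph)$ (via Lemma~\ref{lem:continuity} for $\widehat{\C}$-morphisms and via the Quillen left adjoint $f_{\alpha\sharp}$ for $\scr C$-morphisms). No gaps.
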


A morphism in $\widehat{\Sm}$ will be called essentially smooth:

\begin{definition} \label{definition:essentiallySmooth}
A morphism of Noetherian schemes of finite Krull dimension $T \to S$ is said to be \emph{essentially smooth} if it is the limit of a cofiltered diagram $(T_\alpha)$ of smooth $S$-schemes whose transition maps $T_\beta\to T_\alpha$ are affine and dominant.
\end{definition}

For example, if $X\in\Sm_S$ and $x\in X$, the local schemes $\Spec\scr O_{X,x}$, $\Spec\scr O_{X,x}^{\mathit{h}}$, and $\Spec\scr O_{X,x}^{\mathit{sh}}$ 
(corresponding to the Zariski, Nisnevich, and \'etale topologies, respectively) are essentially smooth over $S$. The following lemma shows that an essentially smooth scheme over a field is in fact essentially smooth over a finite field $\bb F_p$ or over $\Q$, and in particular over a perfect field. Together with the previous continuity results, this will allow us to extend many results from perfect fields to essentially smooth schemes over arbitrary fields.

\begin{lemma}\label{lem:imperfectext}
	Let $k$ be a perfect field and $L$ a field extension of $k$. Then the map $\Spec L\to\Spec k$ is essentially smooth.
\end{lemma}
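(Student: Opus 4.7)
The plan is to write $L$ as the filtered union of its finitely generated smooth $k$-subalgebras. Let $I$ denote the set of finitely generated $k$-subalgebras $A\subseteq L$ such that $\Spec A\to\Spec k$ is smooth, partially ordered by inclusion. Setting $T_A=\Spec A$ for $A\in I$, the claim will be that $\Spec L=\lim_{A\in I^{\op}} T_A$ exhibits $\Spec L\to\Spec k$ as essentially smooth: the transition maps $T_B\to T_A$ for $A\subseteq B$ are automatically affine, and they are dominant because $A\hookrightarrow B$ is injective (both being subrings of the field $L$).

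There are two things to check. First, every element of $L$ lies in some $A\in I$: given $\ell\in L$, consider $k[\ell]\subseteq L$. If $\ell$ is transcendental over $k$, then $k[\ell]\cong k[T]$ is smooth of relative dimension $1$. If $\ell$ is algebraic, then since $k$ is perfect the extension $k(\ell)/k$ is finite separable, so $k[\ell]=k(\ell)$ is étale over $k$ and in particular smooth. Either way $k[\ell]\in I$. Second, $I$ is directed: given $A,B\in I$, the subalgebra $C=k[A\cup B]\subseteq L$ is finitely generated, integral (as a subring of $L$), and reduced; since $k$ is perfect, generic smoothness implies that the smooth locus of $\Spec C$ is a dense open subset containing the generic point, so we can find a nonzero $f\in C$ such that $C_f$ is smooth over $k$. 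Then $C_f\subseteq L$ (as $f$ is invertible in $L$), $C_f$ is a finitely generated $k$-algebra, and $A,B\subseteq C\subseteq C_f$, so $C_f\in I$ is the desired upper bound.

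Finally, filtered unions of subrings correspond to cofiltered limits of the associated affine schemes, so $\Spec L\cong\lim_{A\in I^{\op}}\Spec A$, and this exhibits $\Spec L\to\Spec k$ as an essentially smooth morphism in the sense of Definition~\ref{definition:essentiallySmooth}. The only nontrivial input is generic smoothness over a perfect field, used to guarantee that $I$ is directed; everything else is formal.
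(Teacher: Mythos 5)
Your proof is correct and uses the same essential input as the paper, namely that a reduced scheme of finite type over a perfect field has a dense open smooth locus (EGA IV, 17.15.13, which the paper cites as ``generic smoothness''). The organization differs slightly: the paper first reduces to the case of a finitely generated field extension $K=k(x_1,\dotsc,x_n)$ and then takes the cofiltered limit of nonempty affine opens of a smooth dense open in $\Spec k[x_1,\dotsc,x_n]$, whereas you directly exhibit $\Spec L$ as the cofiltered limit over \emph{all} finitely generated smooth $k$-subalgebras of $L$. Your version is a bit more self-contained, since the paper's ``we may assume $L$ is finitely generated'' step tacitly uses that a nested cofiltered limit of cofiltered limits of smooth affine schemes can be rewritten as a single cofiltered limit; you sidestep this by constructing the indexing poset $I$ in one go and verifying directedness via the same generic-smoothness argument. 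Your checks (affineness, dominance from injectivity of maps of domains, every $\ell\in L$ lying in some $A\in I$) are all correct and are exactly what Definition~\ref{definition:essentiallySmooth} demands.
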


\begin{proof}
	Since $\Spec L=\lim_K\Spec K$ where $K$ ranges over the finitely generated field extensions of $k$ contained in $L$, we may assume that $L=k(x_1,\dotsc,x_n)$ for some $x_i\in L$. Since $k$ is perfect, $\Spec k[x_1,\dotsc,x_n]$ has a smooth dense open subset $U$ (\cite[Corollaire 17.15.13]{EGA4-4}). Then $\Spec L$ is the cofiltered limit of the nonempty affine open subschemes of $U$.
\end{proof}

\subsection{Eilenberg--Mac Lane spaces and spectra}
\label{sub:EML}

Let $\scr C\subset\Sch_S$ be an admissible category and let $R$ be a commutative ring. Given $p\geq q\geq 0$ and an $R$-module $A$, the motivic Eilenberg--Mac Lane space $\K(A(q),p)_{\scr C}\in\H_{\Nis,\A^1}^\pt(\scr C)$ is defined by
\[\K(A(q),p)_{\scr C}=u^\tr(\L R^\tr S^{p,q}\tens_R^\L A),\]
\cf{} \cite[\S 3.2]{VV:EMspaces}. Note that this space does not depend on the ring $R$ since $\K(A(q),p)_{\scr C}=u^\tr(\L \Z^\tr S^{p,q}\tens_\Z^\L A)$.
The motivic Eilenberg--Mac Lane spectrum $\MM A_{\scr C}\in\SH(\scr C)$ is defined by
\[\MM A_{\scr C}=u^\tr(\L R^\tr \1\tens_R^\L A).\]
More explicitly, $\MM A_{\scr C}$ is given by the sequence of spaces $\K(A(n),2n)_{\scr C}$ with bonding maps $\Sigma^{2,1}\K(A(n),2n)_{\scr C}\to \K(A(n+1),2n+2)_{\scr C}$.
Note that $\1\in\Spt(\scr C)$ is cofibrant and therefore $\MM R_{\scr C}$ is equivalent to the commutative monoid $u^\tr R^\tr\1$ in the symmetric monoidal model category $\Spt(\scr C)$. According to \cite[Proposition 38]{RO2}, there is a symmetric monoidal model category $\MM R_{\scr C}\Mod$ of $\MM R_{\C}$-modules. In addition, since $\MM A_{\scr C}\simeq u^\tr(R^\tr \1\tens_R\tilde A)$ where $\tilde A\to A$ is a cofibrant replacement of $A$ in $\Spt^\tr(\scr C,R)$, the object $\MM A_{\scr C}$ has a canonical structure of $\MM R_{\scr C}$-module. When $\scr C=\Sm_S$, we will write $\K(A(q),p)$ for $\K(A(q),p)_{\scr C}$ and $\MM A$ for $\MM A_{\C}$.

The symmetric monoidal adjunction $(R^\tr,u^\tr)$ between $\Spt(\scr C)$ and $\Spt^\tr(\scr C,R)$ lifts to a symmetric monoidal adjunction
\[\Phi\colon \MM R_{\scr C}\Mod\rightleftarrows \Spt^\tr(\scr C,R):\Psi\]
such that $\Phi(\MM R_{\scr C}\wedge\ph)=R^\tr$. This is in fact a symmetric monoidal Quillen adjunction (see \cite[\S 2]{RO2}). In \S\ref{subsection:HZmod} we will show that it is a Quillen equivalence when $\scr C=\Sm_S$ and $S$ is the spectrum of a perfect field whose characteristic exponent is invertible in $R$. Here we recall a weaker result which holds over any base. Call an $\MM R$-module \emph{cellular} if it is an iterated homotopy colimit of $\MM R$-modules of the form $\Sigma^{p,q}\MM R$ with $p,q\in\Z$. Similarly, an object in $\DM(\Sm_S,R)$ is cellular if it is an iterated homotopy colimit of objects of the form $R^\tr\Sigma^{p,q}\1$ with $p,q\in\Z$.

\begin{lemma}
\label{lem:HZmod}
The derived adjunction $(\L\Phi,\R\Psi)$ between $\Ho(\MM R\Mod)$ and $\DM(\Sm_S,R)$ 
restricts to an equivalence between the full subcategories of cellular objects.
\end{lemma}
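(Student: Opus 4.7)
The plan is to apply the standard Schwede--Shipley style recognition principle: an exact, coproduct-preserving left adjoint between compactly generated triangulated categories restricts to an equivalence between the localizing subcategories generated by matched sets of compact objects, provided it is fully faithful on those generating sets.

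I first observe that $\L\Phi$, being a left Quillen functor between stable model categories, is triangulated and preserves homotopy colimits. By the symmetric monoidality $\Phi(\MM R\wedge E)\cong R^\tr E$, taking $E=\Sigma^{p,q}\1$ gives $\L\Phi(\Sigma^{p,q}\MM R)\simeq \L R^\tr \Sigma^{p,q}\1$ for all $p,q\in\Z$. Thus $\L\Phi$ sends the standard set of cellular generators of $\Ho(\MM R\Mod)$ bijectively onto the standard set of cellular generators of $\DM(\Sm_S,R)$.

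The main step is full faithfulness of $\L\Phi$ on these generators. Both groups
\[ [\Sigma^{p,q}\MM R,\Sigma^{p',q'}\MM R]_{\MM R\Mod} \qquad\text{and}\qquad [\L R^\tr\Sigma^{p,q}\1,\L R^\tr\Sigma^{p',q'}\1]_{\DM(\Sm_S,R)} \]
are naturally isomorphic to the mod $R$ motivic cohomology group $\HH^{p'-p,q'-q}(S,R)$: the first using that $\MM R$ is by construction the unit $u^\tr\L R^\tr\1$ of the symmetric monoidal category $\MM R\Mod$, so module-maps out of $\Sigma^{p,q}\MM R$ reduce to stable maps $\Sigma^{p,q}\1\to\Sigma^{p',q'}\MM R$; the second using the $(\L R^\tr,u^\tr)$ adjunction and the same identification $u^\tr\L R^\tr\1\simeq\MM R$. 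I expect the principal obstacle to be verifying that this common identification is in fact induced by $\L\Phi$; this is a bookkeeping exercise tracing through the construction of the Quillen pair $(\Phi,\Psi)$ in \cite[\S2]{RO2}, in which the canonical $\MM R$-module structure on $u^\tr\L R^\tr(-)$ is precisely the one used to define $\Psi$.

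Finally, I extend from the generators to all cellular objects. The cellular generators $\L R^\tr\Sigma^{p,q}\1$ of $\DM(\Sm_S,R)$ are compact, so $\R\Psi$ commutes with coproducts of them. Consequently the full subcategory of cellular $\MM R$-modules on which the unit $E\to\R\Psi\L\Phi E$ is an equivalence is triangulated and closed under coproducts; containing every $\Sigma^{p,q}\MM R$ by the previous step, it exhausts $\Ho(\MM R\Mod)^{\mathrm{cell}}$. Essential surjectivity onto $\DM(\Sm_S,R)^{\mathrm{cell}}$ is then formal, as the essential image of $\L\Phi$ is triangulated, closed under coproducts, and contains the generators; combining these two facts yields the claimed equivalence of cellular subcategories.
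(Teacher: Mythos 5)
Your argument is correct and matches what the paper intends: the paper's own proof is the single line ``The proof of \cite[Corollary 62]{RO2} works with any ring $R$ instead of $\Z$,'' and your Schwede--Shipley-style reduction to the generators (using $\Phi(\MM R\wedge\ph)=R^\tr$, the identification of both mapping groups with $[\Sigma^{p,q}\1,\Sigma^{p',q'}\MM R]$, and the fact that $\L\Phi$ carries compact generators to compact objects so that $\R\Psi$ preserves coproducts) is precisely the content of that cited corollary.
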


\begin{proof}
	The proof of \cite[Corollary 62]{RO2} works with any ring $R$ instead of $\Z$.
\end{proof}

Let $\scr C$ be a class of morphisms of schemes as in \S\ref{sub:basechange}, and let $f\colon T\to S$ be a morphism of base schemes. For any $p\geq q\geq 0$ and any $R$-module $A$ there is a canonical map
\begin{equation}\label{eqn:EMLunstable}
	\L f^\ast \K(A(q),p)_{\scr C_S}\to \K(A(q),p)_{\scr C_T},
\end{equation}
adjoint to the composition
\begin{multline*}
\L R^\tr \L f^\ast u^\tr (\L R^\tr S^{p,q}_S\tens_R^\L A)\simeq \L f^\ast \L R^\tr u^\tr (\L R^\tr S^{p,q}_S\tens_R^\L A)\\
\to \L f^\ast (\L R^\tr S^{p,q}_S\tens_R^\L A) \simeq \L R^\tr \L f^\ast S^{p,q}_S\tens_R^\L \L f^\ast A\simeq \L R^\tr S^{p,q}_T\tens_R^\L A.
\end{multline*}
Similarly, 
by applying~\eqref{eqn:EMLunstable} levelwise we obtain a canonical map 
\begin{equation}\label{eqn:EMLstable}
	\L f^\ast \MM A_{\scr C_S}\to \MM A_{\scr C_T}.
\end{equation}

\begin{theorem}\label{thm:EMLpb}
	Let $U$ be a base scheme and let $f\colon T\to S$ a morphism in $\widehat{\scr C}_U$. Then \eqref{eqn:EMLunstable} and~\eqref{eqn:EMLstable} are equivalences.
\end{theorem}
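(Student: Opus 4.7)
The plan is two-step: handle the case where $f$ is itself a $\scr C$-morphism directly from strict compatibilities at the presheaf level, then extend to general $\widehat{\scr C}$-morphisms by invoking the continuity result Lemma~\ref{lem:continuity}. The stable assertion~\eqref{eqn:EMLstable} will drop out level by level from the unstable one.

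When $f$ is a $\scr C$-morphism, $f^\ast$ admits both a left adjoint $f_\sharp$ and a right adjoint $f_\ast$ as Quillen functors. The second $\Gamma$-square recalled before Lemma~\ref{lem:scontinuity} (from \cite[Lemma~9.3.7]{CD}) gives $f_\sharp R^\tr = R^\tr f_\sharp$ on presheaves; passage to derived left adjoints yields $\L f_\sharp \L R^\tr \simeq \L R^\tr \L f_\sharp$. Taking adjoints and using that $u^\tr$ preserves motivic weak equivalences (so $\R u^\tr \simeq u^\tr$) produces a canonical equivalence $\L f^\ast u^\tr \simeq u^\tr \L f^\ast$. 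Combined with the straightforward commutations $\L f^\ast \L R^\tr \simeq \L R^\tr \L f^\ast$, $\L f^\ast(- \otimes^\L_R A) \simeq \L f^\ast(-) \otimes^\L_R A$, and $\L f^\ast S^{p,q}_S \simeq S^{p,q}_T$, unwinding $\K(A(q),p)_{\scr C} = u^\tr(\L R^\tr S^{p,q} \otimes^\L_R A)$ produces an equivalence that a diagram chase identifies with~\eqref{eqn:EMLunstable}. This settles the theorem for each $\scr C$-morphism $f_\alpha$.

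For general $f = \lim_\alpha f_\alpha$ in $\widehat{\scr C}$, I would test~\eqref{eqn:EMLunstable} after applying $\R\Map(\Sigma^{p',q'}X_+,-)$ for every $X = \lim X_\alpha \in \scr C_T$ and every $p',q'\geq 0$. On the one hand, Lemma~\ref{lem:continuity}(1) applied to $F = \K(A(q),p)_{\scr C_S}$, together with the previous paragraph, gives
\[
\R\Map(\Sigma^{p',q'}X_+, \L f^\ast \K(A(q),p)_{\scr C_S}) \simeq \hocolim_\alpha \R\Map(\Sigma^{p',q'}(X_\alpha)_+, \K(A(q),p)_{\scr C_{T_\alpha}}).
\]
On the other hand, the adjunction $\L R^\tr \dashv u^\tr$ rewrites $\R\Map(\Sigma^{p',q'}X_+, \K(A(q),p)_{\scr C_T})$ as $\R\Map(\L R^\tr \Sigma^{p',q'}X_+, \L R^\tr S^{p,q}_T \otimes^\L_R A)$; using the commutation of left adjoints $\L R^\tr S^{p,q}_T \otimes^\L_R A \simeq \L f^\ast(\L R^\tr S^{p,q}_S \otimes^\L_R A)$ and Lemma~\ref{lem:continuity}(2) applied to $F = \L R^\tr S^{p,q}_S \otimes^\L_R A$ produces the same $\hocolim$. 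Naturality identifies the two colimit descriptions via the map~\eqref{eqn:EMLunstable}, finishing the unstable case.

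The stable case then follows level by level: as noted just before Lemma~\ref{lem:stablecontinuity}, $\L f^\ast$ for $f \in \widehat{\scr C}$ commutes with $\R\Hom(S^{2,1},-)$ and so sends the $\Omega$-spectrum $\MM A_{\scr C_S} = (\K(A(n),2n)_{\scr C_S})_{n\geq 0}$ to an $\Omega$-spectrum whose levels are identified with those of $\MM A_{\scr C_T}$ by the unstable assertion just proved, with bonding maps matching by naturality. The main obstacle in this plan is the passage from the presheaf-level identity $u^\tr f^\ast = f^\ast u^\tr$ to its derived analogue in the $\scr C$-morphism case; once that commutation is in hand, the extension to $\widehat{\scr C}$-morphisms is a fairly mechanical unravelling of Lemma~\ref{lem:continuity}.
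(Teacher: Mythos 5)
Your proposal is correct and takes essentially the same approach as the paper: for $\scr C$-morphisms use the adjoint $f_\sharp$ to commute $\L f^\ast$ past $u^\tr$, and for general $\widehat{\scr C}$-morphisms test against $X_+$ and apply the continuity Lemma~\ref{lem:continuity}. The only presentation difference is that the paper first isolates the single natural transformation $\L f^\ast u^\tr \to u^\tr \L f^\ast$ as the thing to prove an equivalence (the remaining commutations of left adjoints being automatic), which makes the continuity step a touch cleaner, and it notes the reduction to $S=U$ so that $f$ is itself a $\widehat{\scr C}$-morphism rather than merely a morphism between objects of $\widehat{\scr C}_U$.
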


\begin{proof}
	It suffices to show that~\eqref{eqn:EMLunstable} is an equivalence.  We may clearly assume $S=U$,  
	so that $f$ itself is a $\widehat{\C}$-morphism. It suffices to show that the canonical map 
	\begin{equation}\label{eqn:futr}
		\L f^\ast u^\tr\to u^\tr \L f^\ast
	\end{equation}
	is an equivalence.
	If $f$ is in $\scr C$, then the functors $f^\ast$ have Quillen left adjoints $f_\sharp$ such that $\L f_\sharp\L R^\tr\simeq\L R^\tr \L f_\sharp$, and so \eqref{eqn:futr} is an equivalence by adjunction. In the general case, let $f$ be the cofiltered limit of $\scr C$-morphisms $f_\alpha\colon T_\alpha\to S$. Let $F\in\H^\tr_{\Nis,\A^1}(\scr C_S,R)$.
	To show that $\L f^\ast u^\tr F\to u^\tr \L f^\ast F$ is an equivalence in $\H_{\Nis,\A^1}^\pt(\scr C_T)$, it suffices to show that for any $X\in\scr C_T$, the induced map
	\[\R\Map(X_+,\L f^\ast u^\tr F)\to\R\Map(X_+,u^\tr \L f^\ast F)\simeq\R\Map(\L R^\tr X_+,\L f^\ast F)\]
	is an equivalence. Write $X$ as a cofiltered limit of schemes $X_\alpha$ in $\scr C_{T_\alpha}$. Then by Lemma~\ref{lem:continuity}, the above map is the homotopy colimit of the maps
	\[\R\Map((X_\alpha)_+,\L f_\alpha^\ast u^\tr F)\to\R\Map(\L R^\tr(X_\alpha)_+,\L f_\alpha^\ast F),\]
	which are equivalences since $f_\alpha$ is in $\scr C$.
\end{proof}

Now we specialize to the case when $\scr C=\Sm_S$ and $S$ is essentially smooth over a field. 
If $k$ is a field and $X$ is a smooth $k$-scheme, 
the motivic cohomology groups $\HH^{p,q}(X,A)$ are defined in \cite[Definition 3.4]{MVW} for any abelian group $A$. 
By \cite[Proposition 3.8]{MVW}, 
these groups do not depend on the choice of the field $k$ on which $X$ is smooth. 
More generally, 
if $X$ is an essentially smooth scheme over a field $k$, 
cofiltered limit of smooth $k$-schemes $X_\alpha$, 
we define
\[
\HH^{p,q}(X,A)
=
\colim_\alpha \HH^{p,q}(X_\alpha,A).
\]
This definition does not depend on the choice of the diagram $(X_\alpha)$ since it is unique as a pro-object in the category of smooth $k$-schemes 
\cite[Corollaire 8.13.2]{EGA4-3}. 
Moreover, 
by \cite[Lemma 3.9]{MVW}, 
it is also independant of the choice of $k$.
Theorem 4.24 in \cite{Hoyois} shows that the motivic Eilenberg-Mac Lane spaces and spectra represent motivic cohomology:

\begin{theorem}\label{thm:motivic}
Assume that $S$ is essentially smooth over a field. 
Let $A$ be an $R$-module and $X\in\Sm_S$. For any $p\geq q\geq 0$ and $r\geq s\geq 0$, there is a natural isomorphism
    \[\HH^{p-r,q-s}(X,A)\cong[\Sigma^{r,s}X_+,\K(A(q),p)]\]
	 and the canonical maps
 	 \begin{equation}\label{eqn:Omegaspectrum}
 		 \K(A(q),p)\to\R\Hom(S^{r,s},\K(A(q+s),p+r))
 	\end{equation}
	are equivalences. For any $p,q\in\Z$, there is a natural isomorphism
    \[\HH^{p,q}(X,A)\cong[\Sigma^\infty X_+,\Sigma^{p,q}\MM A].\]
    \end{theorem}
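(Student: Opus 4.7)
The plan is to reduce to the case where $S=\Spec k$ for $k$ a perfect field, which is the content of \cite[Theorem~4.24]{Hoyois} (ultimately a synthesis of Voevodsky's representability of motivic cohomology by presheaves with transfers and the explicit description of the Eilenberg--Mac Lane space/spectrum as $u^\tr(\L R^\tr S^{p,q}\tens^\L_R A)$). By Lemma~\ref{lem:imperfectext}, an essentially smooth scheme $S$ over a field is in fact essentially smooth over a perfect field $k$, so one may write $S=\lim_\alpha S_\alpha$ as a cofiltered limit of smooth $k$-schemes with affine dominant transition maps. Any smooth $S$-scheme $X$ is then the cofiltered limit of a diagram of smooth $k$-schemes $X_\alpha$, and by the very definition of motivic cohomology of essentially smooth schemes (plus the independence statement of \cite[Lemma~3.9]{MVW}), we have $\HH^{p-r,q-s}(X,A)=\colim_\alpha \HH^{p-r,q-s}(X_\alpha,A)$.

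For part~(1), let $f\colon S\to\Spec k$ denote the structure morphism, which is essentially smooth. By Theorem~\ref{thm:EMLpb}, the canonical map $\L f^\ast\K(A(q),p)_{\Sm_k}\to \K(A(q),p)_{\Sm_S}$ is an equivalence. Composing with the continuity statement Lemma~\ref{lem:continuity}(1), I compute
\[
[\Sigma^{r,s}X_+,\K(A(q),p)_{\Sm_S}]\cong \colim_\alpha [\Sigma^{r,s}(X_\alpha)_+,\K(A(q),p)_{\Sm_k}].
\]
Applying the perfect-field case of representability termwise, this colimit identifies with $\colim_\alpha \HH^{p-r,q-s}(X_\alpha,A)=\HH^{p-r,q-s}(X,A)$. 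Naturality in $X$ is automatic from naturality of the continuity isomorphism.

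For part~(2), the fact that $\MM A_{\Sm_k}$ is a motivic $\Omega$-spectrum over the perfect field $k$ is again part of the cited result; that is, the map \eqref{eqn:Omegaspectrum} is a motivic equivalence when $S=\Spec k$. The functor $\L f^\ast$ preserves motivic equivalences and, by Theorem~\ref{thm:EMLpb}, sends each $\K(A(q),p)_{\Sm_k}$ to $\K(A(q),p)_{\Sm_S}$; applying $\L f^\ast$ to \eqref{eqn:Omegaspectrum} over $k$ therefore gives the corresponding equivalence over $S$. Part~(3) follows from parts~(1) and~(2) together with \eqref{eqn:Omega}: for $r$ large enough that $-p+2r\geq -q+r\geq 0$,
\[
[\Sigma^\infty X_+,\Sigma^{p,q}\MM A]=[\Sigma^{-p,-q}\Sigma^\infty X_+,\MM A]\cong [\Sigma^{-p+2r,-q+r}X_+,\K(A(r),2r)],
\]
and the right-hand side equals $\HH^{p,q}(X,A)$ by part~(1).

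The main obstacle is entirely concentrated in the perfect-field case, where one genuinely needs to identify the Nisnevich-local, $\A^1$-local replacement of $\L R^\tr S^{p,q}\tens^\L_R A$ with the motivic cohomology complex computed via presheaves with transfers; this is Voevodsky's theorem, cited here through \cite[Theorem~4.24]{Hoyois}. Once this is in hand, the remaining work is the formal but essential ``continuity'' package of Section~2.2 — namely Theorem~\ref{thm:EMLpb} (which relies on the commutation $\L f^\ast u^\tr\simeq u^\tr\L f^\ast$ along essentially smooth morphisms) and Lemma~\ref{lem:continuity} — that allows one to transport the statement from perfect fields to arbitrary essentially smooth bases.
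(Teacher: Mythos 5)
The paper gives no proof of this theorem; it is cited verbatim from \cite[Theorem~4.24]{Hoyois}, and that reference already treats bases essentially smooth over a field. Your proof instead takes the perfect-field case as the external input and carries out the extension to the general essentially smooth case yourself, via the continuity package of \S\ref{sub:basechange} (Theorem~\ref{thm:EMLpb}, Lemma~\ref{lem:continuity}, Lemma~\ref{lem:imperfectext}). This is a correct and instructive unpacking, and is in substance the reduction that Hoyois's own proof must perform; your argument is therefore equivalent but supplies detail the paper omits.

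Two small points you leave implicit are worth noting. In part~(1), after invoking Lemma~\ref{lem:continuity} you are looking at mapping spaces computed over the intermediate bases $T_\alpha$; the passage to homotopy classes in $\H^\pt_{\Nis,\A^1}(\Sm_k)$ relies on the Quillen adjunction $(f_\alpha)_\sharp\dashv f_\alpha^\ast$ for the smooth structure map $f_\alpha\colon T_\alpha\to\Spec k$, together with the observation that $(f_\alpha)_\sharp(X_\alpha)_+\cong(X_\alpha)_+$ in $\s\Pre^\pt(\Sm_k)$. In part~(2), pulling the equivalence~\eqref{eqn:Omegaspectrum} back from $k$ to $S$ requires that $\L f^\ast$ commute with the unstable loop functor $\R\Hom(S^{r,s},-)$ along the essentially smooth morphism $f$; this is precisely the remark made in the paper in the paragraph following Lemma~\ref{lem:continuity}. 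With these observations filled in, your argument is sound.
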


The following consequence of Theorem \ref{thm:motivic} summarizes the standard vanishing results for motivic cohomology,
{\cf} \cite[Corollary 4.26]{Hoyois}.

\begin{samepage}
\begin{corollary}\label{cor:vanish}
	Assume that $S$ is essentially smooth over a field.
	Let $X\in\Sm_S$ and $p,q\in\Z$ satisfy any of the following conditions:
	\begin{enumerate}
		\item $q<0$,
		\item $p>q+d$,
		\item $p>2q$,
	\end{enumerate}
where $d$ is the least integer such that $X$ can be written as a cofiltered limit of smooth $d$-dimensional schemes over a field. 
Then, for any abelian group $A$, $[\Sigma^\infty X_+, \Sigma^{p,q}\MM A]=0$.
\end{corollary}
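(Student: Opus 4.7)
The strategy is to convert the stable homotopy group into ordinary motivic cohomology via Theorem~\ref{thm:motivic}, then reduce by the colimit definition to a smooth scheme of dimension $d$ over a field, where the three vanishing statements are classical.

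First, by Theorem~\ref{thm:motivic} we identify $[\Sigma^\infty X_+, \Sigma^{p,q}\MM A]\cong \HH^{p,q}(X,A)$. Since by definition $\HH^{p,q}(X,A)=\colim_\alpha \HH^{p,q}(X_\alpha,A)$ where $(X_\alpha)$ is a cofiltered diagram of smooth $d$-dimensional schemes over a field with limit $X$, and filtered colimits of zero groups are zero, it suffices to prove each of (1)--(3) under the hypothesis that $X$ is a smooth $d$-dimensional scheme over a field $k$. Moreover, up to extending scalars to an $A$-module over an appropriate ring (or simply using the universal coefficients style identification $\HH^{p,q}(X,A) \cong \HH^{p,q}(X,\Z) \otimes A \oplus \mathrm{Tor}$-terms between adjacent bidegrees), we may take $A=\Z$.

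The remaining three vanishing statements are then the standard ones of Mazza--Voevodsky--Weibel. Concretely, $\HH^{p,q}(X,\Z)=\bb{H}^p_{\Nis}(X,\Z(q))$ where $\Z(q)$ is the Suslin--Voevodsky motivic complex. For (1), $\Z(q)=0$ when $q<0$ by definition. For (2), $\Z(q)$ is represented by a complex of Nisnevich sheaves on $\Sm_k$ concentrated in cohomological degrees $\le q$, while a Noetherian scheme of dimension $d$ has Nisnevich cohomological dimension $\le d$; the hypercohomology spectral sequence then gives vanishing whenever $p>q+d$. For (3), one uses Voevodsky's theorem that motivic cohomology vanishes above the diagonal $p=2q$, which follows from the identification with Bloch's higher Chow groups $\HH^{p,q}(X,\Z)\cong CH^q(X,2q-p)$ and the vanishing of the cycle complex in negative simplicial degree.

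The only genuinely content-bearing step is the reduction via the colimit definition together with Theorem~\ref{thm:motivic}; once one is in the smooth-over-a-field case, all three bounds are standard and no new ingredient is needed. The main ``obstacle'' is purely bookkeeping: making sure that the limit presentation $X=\lim X_\alpha$ can be taken with all $X_\alpha$ of the same dimension $d$, which is guaranteed by the choice of $d$ in the statement of the corollary.
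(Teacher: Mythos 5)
Your proof is correct and takes the approach the paper intends: the paper itself gives no argument beyond pointing at Theorem~\ref{thm:motivic} and citing \cite[Corollary 4.26]{Hoyois}, and what you have written is precisely the content of that reduction. Two small remarks. First, the reduction to $A=\Z$ is unnecessary and your ``universal coefficients style'' splitting is not quite a theorem (in general one only has a universal coefficients spectral sequence or short exact sequence coming from $A(q)\simeq\Z(q)\otimes^{\L}A$, not a clean direct-sum decomposition); this is harmless because all three vanishing statements in MVW already hold verbatim for an arbitrary abelian group of coefficients: $A(q)=0$ for $q<0$ by definition, $A(q)$ is concentrated in cohomological degrees $\leq q$ so (2) follows from the Nisnevich cohomological dimension bound $\leq d$, and (3) follows from the comparison with the cycle complex which has no terms in negative simplicial degree regardless of coefficients. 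Second, the only point genuinely requiring Theorem~\ref{thm:motivic} is the identification $[\Sigma^\infty X_+,\Sigma^{p,q}\MM A]\cong\HH^{p,q}(X,A)$ for $X\in\Sm_S$ with $S$ essentially smooth over a field; once that is in hand the filtered-colimit definition of $\HH^{p,q}(X,A)$ and the classical MVW bounds do all the work, as you say.
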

\end{samepage}

\subsection{Operations in motivic cohomology}

Let $S$ be essentially smooth over a field. We fix a prime number $\ell\neq\Char S$ and we abbreviate $\K(\Z/\ell(n),2n)$ to $K_n$ and $\MM\Z/\ell$ to $\MM$.
Denote by $\MMM^\star$ the algebra of bistable operations in the motivic cohomology of motivic spaces over $S$: 
an element $\phi\in\MMM^{p,q}$ is a collection of natural transformations
\[\phi_{r,s}\colon\tilde\HH^{r,s}(\ph,\Z/\ell)\to\tilde\HH^{r+p,s+q}(\ph,\Z/\ell),\quad r,s\in\Z,\]
on $\H_{\Nis,\A^1}^\pt(\Sm_S)$ such that, under the bistability isomorphisms, $\phi_{r,s}\Sigma^{2,1}=\phi_{r-2,s-1}$. If $\iota_n\in\tilde\HH^{2n,n}(K_n,\Z/\ell)$ is the tautological class, it is clear that we have an isomorphism
\[\MMM^\star\cong\lim_{n\geq 0}\tilde\HH^{\ast+2n,\ast+n}(K_n,\Z/\ell),\quad\phi\mapsto (\phi_{2n,n}(\iota_n))_n\]
(see \cite[Proposition 2.7]{VV:motivicalgebra}).
As a result, by Theorem~\ref{thm:EMLpb}, if $S$ and $T$ are both essentially smooth over a field, a map $f\colon T\to S$ induces a map of algebras
\begin{equation}\label{eqn:M**pullback}
	f^\ast\colon \MMM^\star_S\to\MMM^\star_T.
\end{equation}

Note that any morphism $\MM\to\Sigma^{p,q}\MM$ in $\SH(\Sm_S)$ induces a bistable operation of bidegree $(p,q)$, which defines a canonical map
\begin{equation}\label{eqn:phantom}
	\MM^\star\MM\to \MMM^\star.
\end{equation}
In particular, the Bockstein map $\MM\to\Sigma^{1,0}\MM$ induces an operation $\beta\in\MMM^{1,0}$, and any cohomology class $\alpha\in\HH^{p,q}(S,\Z/\ell)$ defines an element of $\MMM^{p,q}$.
We can also define the reduced power operations
\[P^i\in\MMM^{2i(\ell-1),i(\ell-1)}\]
as follows. If $S$ is the spectrum of a perfect field, these are defined in \cite[\S9]{VV:motivicalgebra}. It is easy to see by inspecting their definition that if $f\colon\Spec k'\to\Spec k$ is an extension of perfect fields, $f^\ast(P^i)=P^i$. Thus, if $f\colon S\to\Spec k$ is essentially smooth where $k$ is a perfect field, the operation $f^\ast(P^i)\in\MMM^\star_S$ is independent of the choice of $f$.

Let $\AAA^\star\subset\MMM^\star$ be the subalgebra generated by 
\begin{itemize}
	\item the reduced power operations $P^i$ for $i\geq 0$,
	\item the Bockstein $\beta$,
	\item the operations $u\mapsto\alpha u$ for $\alpha\in\HH^\star(S,\Z/\ell)$.
\end{itemize}
Clearly, the map~\eqref{eqn:M**pullback} sends $\AAA^\star_S$ to $\AAA^\star_T$, so that we have a commutative square of algebras:
\begin{tikzequation}\label{eqn:A**pullback}
	\diagram{\AAA^\star_S & \AAA^\star_T \\ \MMM^\star_S & \MMM^\star_T\rlap. \\};
	\arrows (11-) edge node[above]{$f^\ast$} (-12) (11) edge[c->] (21) (21-) edge node[below]{$f^\ast$} (-22) (12) edge[c->] (22);
\end{tikzequation}

\section{Main Results}
\label{section:mainresults}

\subsection{The fundamental square}

The following theorem is our key technical result. 
It will be proved in~\S \ref{section:proofs}.

\begin{theorem}
\label{theorem:CDcomm}
Let $k$ be a perfect field, 
$\ell\neq\Char k$ a prime number, 
$R$ a $\Z_{(\ell)}$-algebra, 
and $i\colon\scr C\into\scr D$ an inclusion of admissible subcategories of $\Sch_k$ with $\scr C\subset\Sm_k$. 
Then the square
\begin{tikzmath}
\diagram{
\H_{\Nis,\A^1}^\pt(\scr D) & \H_{\Nis,\A^1}^\pt(\scr C) \\ 
\H^\tr_{\Nis,\A^1}(\scr D,R) & \H^\tr_{\Nis,\A^1}(\scr C,R) \\};
\arrows (11-) edge node[above]{$ i^\ast$} (-12) (11) edge node[left]{$\L R^\tr$} (21) (21-) edge node[below]{$ i^\ast$} (-22) (12) edge node[right]{$\L R^\tr$} (22);
\end{tikzmath}
is commutative.
\end{theorem}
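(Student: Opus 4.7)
The goal is to prove $\L R^\tr\circ i^\ast \simeq i^\ast\circ\L R^\tr$. Since the corresponding right adjoints commute strictly ($i^\ast u^\tr = u^\tr i^\ast$ as functors between presheaf categories, both being ``restrict to $\scr C$ and forget transfers''), Beck--Chevalley supplies a canonical natural transformation
\[
	\alpha\colon \L R^\tr\circ i^\ast \longrightarrow i^\ast\circ\L R^\tr,
\]
and the task is to check it is an equivalence on $\H^\pt_{\Nis,\A^1}(\scr D)$. By admissibility of $\scr C$, \'etale morphisms of finite type to any $X\in\scr C$ lie in $\scr C$, so $i$ is cocontinuous for the Nisnevich topology; Lemma~\ref{lem:cocontinuousTransfers} then ensures $i^\ast$ preserves motivic equivalences on both rows. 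Hence both composites in $\alpha$ preserve homotopy colimits, and it suffices to check $\alpha_{X_+}$ for $X\in\scr D$.

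The case $X\in\scr C$ is trivial: full faithfulness of $i_!\colon\Cor(\scr C)\hookrightarrow\Cor(\scr D)$ (and of its pointed-presheaf analogue) makes $\alpha_{X_+}$ the identity. For the general case $X\in\scr D\setminus\scr C$, I would apply Gabber's theorem (Theorem~\ref{theorem:gabberGlobal}) to produce an $\ldh$-hypercover $X_\bullet\to X$ with each $X_n$ smooth over $k$; after enlarging $\scr C$ within $\Sm_k$ we may assume $X_n\in\scr C$. The essential input is then Kelly's descent theorem (\cite[\S5]{KellyThesis}, revisited in \S\ref{subsection:tfsI}): when $R$ is a $\Z_{(\ell)}$-algebra, any Nisnevich-local object of $\s\Pre^\tr(\ph,R)$ is automatically $\ldh$-local, so $\L R^\tr$ inverts the augmentation $|X_{\bullet+}|\to X_+$ on the transfer side.

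Applying this descent on $\scr D$ gives $\L R^\tr(X_+)\simeq|\L R^\tr(X_{\bullet+})|$ in $\H^\tr_{\Nis,\A^1}(\scr D,R)$. Pulling back along $i^\ast$, which commutes with homotopy colimits, and invoking the smooth case term by term, yields
\[
	i^\ast\L R^\tr(X_+) \simeq \big|\L R^\tr(X_{\bullet+})\big|
\]
in $\H^\tr_{\Nis,\A^1}(\scr C,R)$. Applying the same descent principle on $\scr C$ to the augmentation $|X_{\bullet+}|\to i^\ast X_+$ in $\s\Pre^\pt(\scr C)$ identifies $\L R^\tr(i^\ast X_+)$ with $|\L R^\tr(X_{\bullet+})|$ as well, and comparing the two yields $\alpha_{X_+}$.

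\textbf{Main obstacle.} The delicate step is the second appearance of descent. Since $X\notin\scr C$, the map $X_\bullet\to X$ is not literally an $\ldh$-hypercover of an object of $\scr C$, and the augmentation $|X_{\bullet+}|\to i^\ast X_+$ is not \emph{a priori} an $\ldh$-local equivalence in $\s\Pre^\pt(\scr C)$. One must therefore extract from Kelly's theorem an $\ldh$-descent statement for the presheaf $\L R^\tr_{\scr C}(i^\ast X_+)$ itself, and show that transfer completion still inverts the augmentation. This is the source of the hypotheses $\ell\neq\Char k$ (for Gabber) and $R$ a $\Z_{(\ell)}$-algebra (to invert prime-to-$\ell$ degrees in transfers), and it is the technical heart of \S\ref{section:proofs}.
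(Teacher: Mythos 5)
Your approach is in the same spirit as the paper's (Gabber's theorem plus Kelly's $\ell\mathrm{dh}$-descent for motivic cohomology), and the reduction to $\scr C=\Sm_k$, $\scr D=\Sch_k$ and to representables is legitimate. But there is a genuine gap in the middle: the claim that $\L R^\tr(X_+)\simeq|\L R^\tr(X_{\bullet+})|$ \emph{in $\H^\tr_{\Nis,\A^1}(\scr D,R)$} is not available. The $\ell\mathrm{dh}$-descent theorem (Corollaries~\ref{corollary:elldescent1} and~\ref{corollary:elldescent}) is proved only over $\Sm_k$, and its proof crucially uses the Cancellation Theorem and the fact that $\MM R$-modules in $\SH(\Sm_k)$ are $\ell\mathrm{dh}$-local; there is no analogous statement for $\H^\tr_{\Nis,\A^1}(\Sch_k,R)$. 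So you cannot invoke descent on the $\scr D$ side of the square.

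Interestingly, the issue you flag in your ``main obstacle'' is \emph{not} an obstacle: since every scheme admits a smooth $\ell\mathrm{dh}$-cover (Corollary~\ref{corollary:l'resolution}), the inclusion $i\colon\Sm_k\into\Sch_k$ is cocontinuous for the $\ell\mathrm{dh}$-topology, so $i^\ast$ preserves $\ell\mathrm{dh}$-local equivalences and the augmentation $|X_{\bullet+}|\to i^\ast X_+$ \emph{is} an $\ell\mathrm{dh}$-local equivalence in $\s\Pre^\pt(\Sm_k)$. The real delicacy is that you should not pass to $\H^\tr_{\Nis,\A^1}$ before restricting to $\Sm_k$. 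The paper's fix is to stay at the level of $R^\tr W_\ldh$-local equivalences: since $\ell\mathrm{dh}$ is compatible with $R$-transfers on $\Sch_k$ (Proposition~\ref{prop:ldhTransfers}~(1)), $\L R^\tr$ sends $\ell\mathrm{dh}$-local equivalences to $R^\tr W_\ldh$-local equivalences; then $i^\ast$ preserves $R^\tr W_\ldh$-local equivalences (Proposition~\ref{prop:ldhTransfers}~(3)); and only at the very end, over $\Sm_k$, does one apply Corollary~\ref{corollary:elldescent} to conclude that these become motivic equivalences. The paper also uses the unit $\L i_! i^\ast F\to F$ (which is an $\ell\mathrm{dh}$-local equivalence on $\Sch_k$ because it restricts to an isomorphism on $\Sm_k$) in place of an explicit smooth hypercover, which handles arbitrary $F$ without reducing to representables and avoids any bookkeeping of hypercovers. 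With these adjustments your argument goes through and is essentially the paper's.
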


Commutativity of the fundamental square means the canonical natural transformation
\[\L R^\tr i^\ast\to i^\ast\L R^\tr\]
is an isomorphism. 
If $k$ admits resolution of singularities in the sense of \cite[Definition 3.4]{FV}, this holds for any commutative ring $R$ by \cite[Theorem 1.21]{VV:EMspaces}. 
However, 
resolution of singularities is only known to hold for fields of characteristic zero.

\subsection{Motives of Eilenberg--Mac Lane spaces}
\label{subsection:stm}
Let $F$ be a field. Recall that a split proper Tate motive of weight $\geq n$ in $\H^\tr_{\Nis,\A^1}(\scr C,F)$ is a direct sum of object of the form $\mathbf{L}F^\tr S^{p,q}$ with $p\geq 2q$ and $q\geq n$ \cite[Definition 2.60]{VV:EMspaces}.

\begin{theorem}
\label{theorem:splitpropertatemotive}
Let $S$ be essentially smooth over a field of characteristic exponent $c$. Let $A$ be a finitely generated $\Z[1/c]$-module, $F$ a field of characteristic $\neq c$, and $p\geq 2q\geq 0$. Then $\mathbf{L}F^\tr \K(A(q),p)_{\Sm_S}$ is a split proper Tate motive of weight $\geq q$ in $\H^\tr_{\Nis,\A^1}(\Sm_S,F)$.
\end{theorem}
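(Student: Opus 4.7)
The plan is to adapt Voevodsky's proof of \cite[Theorem~3.51]{VV:EMspaces} from characteristic zero by substituting Theorem~\ref{theorem:CDcomm} for resolution of singularities and the \'etale cohomological realization of \S\ref{subsection:etalerealization} for Voevodsky's topological realization functor.

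First I would reduce to the case $S=\Spec k$ with $k$ perfect. By Lemma~\ref{lem:imperfectext}, any $S$ essentially smooth over a field is essentially smooth over a perfect field $k$; write $f\colon S\to\Spec k$ for the structure map. Theorem~\ref{thm:EMLpb} gives $\L f^\ast \K(A(q),p)_{\Sm_k}\simeq \K(A(q),p)_{\Sm_S}$, and $\L f^\ast$ commutes with $\mathbf{L}F^\tr$ since both are left adjoints in the commutative diagram~\eqref{eqn:transfers}. Since $\L f^\ast \mathbf{L}F^\tr S^{p,q}\simeq \mathbf{L}F^\tr S^{p,q}$ and $\L f^\ast$ preserves homotopy colimits, it carries split proper Tate motives of weight $\geq q$ to split proper Tate motives of weight $\geq q$, completing the reduction.

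Now fix $k$ perfect and let $i\colon\Sm_k\hookrightarrow\Sch_k$. The Eilenberg--Mac Lane space is defined by the same formula in either category, so $\K(A(q),p)_{\Sm_k}=i^\ast\K(A(q),p)_{\Sch_k}$. Choose a prime $\ell\neq c$ for which $F$ is a $\Z_{(\ell)}$-algebra; such an $\ell$ exists because $\Char F\neq c$. Theorem~\ref{theorem:CDcomm} then yields
\[
\mathbf{L}F^\tr\K(A(q),p)_{\Sm_k}\simeq \mathbf{L}F^\tr i^\ast\K(A(q),p)_{\Sch_k}\simeq i^\ast \mathbf{L}F^\tr\K(A(q),p)_{\Sch_k}.
\]
Since $i^\ast$ sends $\mathbf{L}F^\tr S^{p,q}$ to itself and preserves homotopy colimits, it suffices to show that $\mathbf{L}F^\tr\K(A(q),p)_{\Sch_k}$ is a split proper Tate motive of weight $\geq q$ in $\H^\tr_{\Nis,\A^1}(\Sch_k,F)$.

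Inside $\Sch_k$ I would then run Voevodsky's symmetric product argument. Structure theory decomposes the finitely generated $\Z[1/c]$-module $A$ into copies of $\Z[1/c]$ and cyclic torsion modules $\Z[1/c]/n$ with $n$ coprime to $c$; using that the class of split proper Tate motives of weight $\geq q$ is closed under direct sums and the cofiber sequences induced by multiplication by $n$, one reduces to $A=\Z[1/c]$ and $A=\Z/\ell$ with $\ell\neq c$. For these coefficient groups, $\K(A(q),2q)_{\Sch_k}$ is identified with a colimit of motivic infinite symmetric products of the Tate sphere $S^{2q,q}$ (with $c$ inverted), and the further Eilenberg--Mac Lane spaces $\K(A(q),p)$ with $p>2q$ are obtained by suspension; a motivic Nakaoka-type decomposition of $\mathbf{L}F^\tr\mathrm{Sym}^n(S^{2q,q})$ into Tate twists with weights $\geq q$ then yields the desired split proper Tate structure.

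The main obstacle is carrying out this decomposition of the symmetric-product motives in positive characteristic. Voevodsky's argument in characteristic zero uses topological realization and the classical Dold--Thom theorem to pin down the cohomology of $\mathrm{Sym}^n(S^{2q,q})$; over a perfect field of positive characteristic one must instead combine the \'etale realization of \S\ref{subsection:etalerealization} (which computes the correct values because $\Char F\neq c$) with Gabber's theorem on alterations and the $\ldh$-descent for $\mathbf{M}\Z_{(\ell)}$-modules from \cite{KellyThesis} to transport information from smooth alterations back to the singular symmetric powers. This is precisely the payoff of the machinery developed in \S\ref{section:background} and in the proof of Theorem~\ref{theorem:CDcomm}.
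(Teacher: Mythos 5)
Your reduction to a perfect base field via Lemma~\ref{lem:imperfectext} and Theorem~\ref{thm:EMLpb}, and your use of Theorem~\ref{theorem:CDcomm} to pass from $\Sm_k$ to a larger admissible category, match the paper's strategy (the paper takes the admissible category $\scr D$ of normal schemes rather than all of $\Sch_k$, but this is a cosmetic difference, and both choices make the identification $i^\ast K_{\scr D}\simeq K_{\Sm_k}$ available). Where your proposal goes off the rails is the third step. You propose to re-derive the split proper Tate decomposition of $\L F^\tr\K(A(q),p)_{\scr D}$ from scratch, with a fresh motivic Nakaoka argument backed by the \'etale realization of \S\ref{subsection:etalerealization} and Gabber's theorem. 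None of this is needed, and part of it would not work.

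The computation you are worried about is already available as a citation: \cite[Corollary 3.28]{VV:EMspaces} proves that $\L F^\tr \K_{\scr D}$ is a split proper Tate motive of weight $\geq q$ over any perfect field, by a purely motivic argument via symmetric products inside the big admissible category. That argument uses neither resolution of singularities nor topological realization; those tools only enter Voevodsky's paper later (the fundamental square in characteristic zero, and the Steenrod-algebra comparison in his \S3.4). So the only thing that needed replacing here was the fundamental square, and you already replaced it when you invoked Theorem~\ref{theorem:CDcomm}. Gabber's theorem is buried inside the proof of Theorem~\ref{theorem:CDcomm}; there is no second use of it in the present proof. Similarly, the \'etale realization is developed in the paper for the later computation of the bistable operations (Theorem~\ref{theorem:main}(1)), under hypotheses ($\Z/\ell$-coefficients, algebraically closed $k$) that do not match the present statement, which is for an arbitrary field $F$ of characteristic $\neq c$. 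In short, once you have Theorem~\ref{theorem:CDcomm}, the proof ends with ``cite \cite[Corollary 3.28]{VV:EMspaces} and use that $\L i_!$ is fully faithful and restricts to an equivalence on split proper Tate motives''; the additional machinery in your last paragraph is a misdiagnosis of where the positive-characteristic difficulty lives.
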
 
\begin{proof}
	We abbreviate $\K(A(q),p)_{\scr C}$ to $\K_{\scr C}$.	By Theorem~\ref{thm:EMLpb} we can assume that $S$ is the spectrum of a perfect field. The theorem is obvious if $p=0$, so assume further that $p>0$. Let $\scr D\subset\Sch_S$ be the admissible subcategory of normal schemes and let $i\colon\Sm_S\into\scr D$ be the inclusion. Note that $i^\ast K_{\scr D}\simeq K_{\Sm_S}$. By Theorem~\ref{theorem:CDcomm}, we have
\begin{equation}\label{eqn:i*K}
	i^\ast \L F^\tr \K_{\scr D}\simeq \L F^\tr i^\ast \K_{\scr D}\simeq \L F^\tr \K_{\Sm_S}.
\end{equation}
By \cite[Corollary 3.28]{VV:EMspaces}, $\L F^\tr \K_{\scr D}$ is split proper Tate of weight $\geq q$. We conclude by noting that the adjunction $(\L i_!,i^\ast)$ restricts to an equivalence between the subcategories of split proper Tate motives of weight $\geq q$ since $\L i_!$ is fully faithful.
\end{proof}

We now fix a prime number $\ell\neq\Char S$ and we abbreviate $\MM\Z/\ell$ to $\MM$. The following corollary is assertion (2) of Theorem~\ref{theorem:main}:

\begin{corollary}
\label{cor:Astablemaps}
The canonical map $\MM^\star\MM\to\MMM^\star$ is an isomorphism.
\end{corollary}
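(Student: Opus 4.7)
The plan is to realize $\MM^{\star}\MM$ as the output of a Milnor short exact sequence coming from the $\Omega$-spectrum presentation of $\MM$, and then kill the resulting $\lim^{1}$ term using Theorem~\ref{theorem:splitpropertatemotive}. Writing $\MM \simeq \hocolim_{n} \Sigma^{-2n,-n} \Sigma^{\infty} K_{n}$ turns $[\MM, \Sigma^{\star}\MM]$ into a homotopy limit and produces a short exact sequence
\[
0 \to {\lim}^{1}_{n}\, \tilde{\HH}^{\ast + 2n - 1,\, \ast + n}(K_{n}, \Z/\ell) \to \MM^{\star}\MM \to \lim_{n}\, \tilde{\HH}^{\ast + 2n,\, \ast + n}(K_{n}, \Z/\ell) \to 0.
\]
The right-hand term is exactly $\MMM^{\star}$ via the identification $\MMM^{\star} \cong \lim_{n} \tilde{\HH}^{\ast + 2n, \ast + n}(K_{n}, \Z/\ell)$ recalled at the start of \S2.4 (see \cite[Proposition~2.7]{VV:motivicalgebra}), and a routine definitional check shows that the surjection of the Milnor sequence agrees with the canonical map~(\ref{eqn:phantom}). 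Thus it suffices to prove the $\lim^{1}$ vanishes in every bidegree.

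Next, I would feed in Theorem~\ref{theorem:splitpropertatemotive} with $A = F = \Z/\ell$: this yields
\[
\L(\Z/\ell)^{\tr} K_{n} \simeq \bigoplus_{\alpha} \L(\Z/\ell)^{\tr} S^{p_{\alpha}, q_{\alpha}}, \qquad p_{\alpha} \geq 2 q_{\alpha},\ q_{\alpha} \geq n,
\]
locally finitely in each bidegree. Passing to motivic cohomology with $\Z/\ell$-coefficients yields the product decomposition
\[
\tilde{\HH}^{r, s}(K_{n}, \Z/\ell) \cong \prod_{\alpha} \HH^{r - p_{\alpha},\, s - q_{\alpha}}(S, \Z/\ell),
\]
and Corollary~\ref{cor:vanish} kills every factor with $q_{\alpha} > s$. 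So for fixed bidegree $(r, s)$ only finitely many summands—namely those with $n \leq q_{\alpha} \leq s$ and $p_{\alpha}$ in a further bounded range dictated by the vanishing inequalities—actually contribute.

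The main obstacle is to upgrade this finiteness into the Mittag--Leffler condition for the inverse system $\{\tilde{\HH}^{p + 2n - 1,\, q + n}(K_{n}, \Z/\ell)\}_{n}$ at each fixed $(p, q)$. My strategy is to exploit the $\Omega$-spectrum equivalence $K_{n} \simeq \Omega^{2,1} K_{n+1}$ of Theorem~\ref{thm:motivic}~(\ref{eqn:Omegaspectrum}): translated via the $(\Sigma^{2,1}, \Omega^{2,1})$-adjunction, the bonding map $\Sigma^{2,1} K_{n} \to K_{n+1}$ induces weight-preserving comparisons between the finite collections of surviving summands identified above, and these become isomorphisms once $n$ is large compared to $p$ and $q$. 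Hence the transition maps in the inverse system are eventually isomorphisms in each bidegree, $\lim^{1}$ vanishes, and $\MM^{\star}\MM \to \MMM^{\star}$ is an isomorphism.
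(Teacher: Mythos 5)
Your setup (Milnor exact sequence, feeding in Theorem~\ref{theorem:splitpropertatemotive} to get a split proper Tate decomposition of $\L\Z/\ell^\tr K_n$, using Corollary~\ref{cor:vanish} to cut down to finitely many contributing summands per bidegree) exactly mirrors the paper's. The divergence, and the gap, is in how you kill the $\lim^1$ term. The paper observes that, writing $\L\Z/\ell^\tr K_n\simeq\Sigma^{2n,n}M_n$ with $M_n$ split proper Tate of weight $\geq 0$, the cofiber sequence
\[
\bigoplus_{n\geq 0}\Sigma^\infty M_n\to\bigoplus_{n\geq 0}\Sigma^\infty M_n\to\hocolim_{n\to\infty}\Sigma^\infty M_n
\]
\emph{splits} in $\DM(\Sm_S,\Z/\ell)$: this is the content of \cite[Corollary 2.71]{VV:EMspaces} over a perfect field, transported to $S$ via the base-change Theorem~\ref{thm:EMLpb}, and it formally forces the $\lim^1$ to vanish. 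You instead assert a Mittag--Leffler property by claiming that the transition maps in the inverse system $\{\tilde\HH^{p-1+2n,\,q+n}(K_n,\Z/\ell)\}_n$ are eventually isomorphisms ``dictated by the $\Omega$-spectrum equivalences.''

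That claim is doing all the work, and as stated it is unjustified. The $\Omega$-spectrum equivalence $K_n\simeq\Omega^{2,1}K_{n+1}$ identifies the transition map with the cohomology-suspension (looping) map $\tilde\HH^{r+2,s+1}(K_{n+1})\to\tilde\HH^{r,s}(\Omega^{2,1}K_{n+1})$, which is not an isomorphism in general; establishing that it becomes one in a suitable range is a genuine stable-range theorem, not a consequence of formal adjunction. Moreover, the split proper Tate decompositions of the $M_n$ are not canonical, so there is no a priori reason that the bonding maps respect them in a way that makes the ``weight-preserving comparisons'' visibly stabilize. Finally, note that you cannot fall back on finiteness: the summands $\HH^{r-p_\alpha,\,s-q_\alpha}(S,\Z/\ell)$ need not be finite groups (e.g.\ $\HH^{1,1}(\Spec k,\Z/\ell)\cong k^\times/\ell$), so the usual ``inverse system of finite groups satisfies ML'' shortcut is unavailable. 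Supplying the missing stabilization argument would essentially amount to reproving \cite[Corollary 2.71]{VV:EMspaces}, so you are better off invoking that result (together with base change from a perfect field) directly, as the paper does.
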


\begin{proof}
    This map fits in the exact sequence
    \[0\to\lim^1 \tilde \HH^{p-1+2n,q+n}(K_n,\Z/\ell)\to \MM^{p,q}\MM\to\lim \tilde \HH^{p+2n,q+n}(K_n,\Z/\ell)\to 0,\]
    and we must show that the $\lim^1$ term vanishes. 
    By Theorem~\ref{theorem:splitpropertatemotive}, $\L\Z/\ell^\tr K_n\simeq\Sigma^{2n,n}M_n$ where $M_n$ is split proper Tate of weight $\geq 0$. All functors should be derived in the following computations. Using the standard adjunctions, we get
    \begin{multline*}
        \tilde \HH^{p-1+2n,q+n}(K_n,\Z/\ell)\cong[\Sigma^\infty K_n,\Sigma^{p-1+2n,q+n}\MM]\cong[\Sigma^\infty\Z/\ell^\tr K_n,\Sigma^{p-1+2n,q+n}\Z/\ell^\tr\1]\\
        \cong[\Sigma^{2n,n}\Sigma^\infty M_n,\Sigma^{p-1+2n,q+n}\Z/\ell^\tr\1]\cong[\Sigma^\infty M_n,\Sigma^{p-1,q}\Z/\ell^\tr\1].
    \end{multline*}
	 To show that $\lim^1 [\Sigma^\infty M_n,\Sigma^{p-1,q}\Z/\ell^\tr\1]=0$, it remains to show that the cofiber sequence
    \[\bigoplus_{n\geq 0}\Sigma^\infty M_n\to\bigoplus_{n\geq 0}\Sigma^\infty M_n\to\hocolim_{n\to\infty}\Sigma^\infty M_n\]
	 splits in $\DM(\Sm_S,\Z/\ell)$. If $S$ is the spectrum of a perfect field, this follows from \cite[Corollary 2.71]{VV:EMspaces}. In general, let $f\colon S\to\Spec k$ be essentially smooth where $k$ is a perfect field. Then by Theorem~\ref{thm:EMLpb}, the above cofiber sequence is the image by $f^\ast$ of the corresponding cofiber sequence over $k$, and hence it splits.
\end{proof}

\begin{corollary}
\label{cor:split}
	$\MM\wedge\MM$ is equivalent to an $\MM$-module of the form $\bigvee_\alpha\Sigma^{p_\alpha,q_\alpha}\MM$.
\end{corollary}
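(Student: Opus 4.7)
The plan is to adapt Voevodsky's proof of \cite[Theorem 3.49]{VV:EMspaces}, with Theorem \ref{theorem:splitpropertatemotive} taking the place of his characteristic-zero input. Using Corollary \ref{cor:Astablemaps} together with Theorem \ref{theorem:main}(1), I first lift each admissible monomial $\beta^{\epsilon_r}P^{i_r}\dotso\beta^{\epsilon_0}$ to a map $\phi_\alpha\colon\MM\to\Sigma^{p_\alpha,q_\alpha}\MM$ in $\SH(\Sm_S)$, and combine it with the multiplication $\mu\colon\MM\wedge\MM\to\MM$ of the ring spectrum $\MM$ to form the left-$\MM$-linear composites
\[
\psi_\alpha\colon\MM\wedge\MM\xrightarrow{\id\wedge\phi_\alpha}\MM\wedge\Sigma^{p_\alpha,q_\alpha}\MM\xrightarrow{\mu}\Sigma^{p_\alpha,q_\alpha}\MM.
\]
The bidegrees $(p_\alpha,q_\alpha)$ from Theorem \ref{theorem:main}(1) satisfy $p_\alpha\geq 2q_\alpha\geq 0$ and tend jointly to infinity, so the vanishing ranges of Corollary \ref{cor:vanish} imply that in each fixed output bidegree only finitely many $\psi_\alpha$ can contribute nontrivially; this lets the family $\{\psi_\alpha\}$ assemble into a single $\MM$-linear map
\[
\Psi\colon\MM\wedge\MM\longrightarrow\bigvee_\alpha\Sigma^{p_\alpha,q_\alpha}\MM.
\]

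To show $\Psi$ is an equivalence, I would apply the functor $\L\Phi$ from Lemma \ref{lem:HZmod}, using the identity $\L\Phi(\MM\wedge X)\simeq\L\Z/\ell^\tr X$. The resulting map $\L\Phi(\Psi)\colon\L\Z/\ell^\tr\MM\to\bigoplus_\alpha\L\Z/\ell^\tr\Sigma^{p_\alpha,q_\alpha}\1$ lives in $\DM(\Sm_S,\Z/\ell)$. Theorem \ref{theorem:splitpropertatemotive} applied levelwise, combined with the splitting argument used at the end of the proof of Corollary \ref{cor:Astablemaps} (which appeals to \cite[Corollary 2.71]{VV:EMspaces} and Theorem \ref{thm:EMLpb} for base change to a perfect field), produces an abstract direct-sum decomposition $\L\Z/\ell^\tr\MM\simeq\bigoplus_\alpha\L\Z/\ell^\tr\Sigma^{p_\alpha,q_\alpha}\1$ with exactly the same bidegrees as the target. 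Invoking Theorem \ref{theorem:main}(1) to match the $\phi_\alpha$ with the summands of this decomposition, one identifies $\L\Phi(\Psi)$ with the canonical splitting, and concludes that $\L\Phi(\Psi)$ is an equivalence in $\DM$.

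The main obstacle is then to lift the $\DM$-equivalence $\L\Phi(\Psi)$ to an equivalence of $\MM$-modules. The tool is Lemma \ref{lem:HZmod}, which asserts that $\L\Phi$ is conservative on the full subcategory of cellular $\MM$-modules. The wedge $\bigvee_\alpha\Sigma^{p_\alpha,q_\alpha}\MM$ is manifestly cellular, so the whole argument reduces to proving that $\MM\wedge\MM$ is cellular as an $\MM$-module. I would handle this levelwise: since each $\L\Z/\ell^\tr K_n$ is split proper Tate hence cellular in $\DM$, the cellular $\MM$-module $\R\Psi\L\Z/\ell^\tr K_n\simeq\bigvee\Sigma^{p_\beta^n,q_\beta^n}\MM$ should be identified by the unit map with $\MM\wedge K_n$ (using that the counit is an equivalence on cellular $\DM$-objects and the triangle identities), and cellularity then persists under the stabilising homotopy colimit over $n$ that produces $\MM\wedge\MM$ from the levels $\MM\wedge K_n$.
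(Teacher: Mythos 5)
Your proposal takes a genuinely different route from the paper, and it introduces a circularity. The paper's proof constructs no explicit map: it computes $\L\Phi(\MM\wedge\MM)=\L\Z/\ell^\tr\MM$ as $\Sigma^\infty\colim_n M_n$ (where $\L\Z/\ell^\tr K_n\simeq\Sigma^{2n,n}M_n$), notes that this colimit is again a split proper Tate object via \cite[Corollary 2.71]{VV:EMspaces} and Theorem~\ref{thm:EMLpb}, and from this gets an \emph{abstract} decomposition $\bigoplus_\alpha\L\Z/\ell^\tr S^{p_\alpha,q_\alpha}$ for an unspecified family of bidegrees. Lemma~\ref{lem:HZmod} then transfers this to an equivalence of $\MM$-modules. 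The identification of $(p_\alpha,q_\alpha)$ with the bidegrees of the admissible monomials is deferred to the proof of Theorem~\ref{theorem:main}(3), which uses Corollary~\ref{cor:split} as an input. You instead invoke Theorem~\ref{theorem:main}(1) (and in effect (3)) to build and identify the explicit map $\Psi$, which reverses the logical order: Corollary~\ref{cor:split} must come first, with the indexing family left abstract. Even over a perfect field, where main(1) is available independently, proving that your explicit $\Psi$ is an equivalence --- and not merely a split injection --- requires the same cardinality comparison that constitutes the proof of main(3), so appealing to Theorem~\ref{theorem:main} does not save you work here.

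The cellularity step at the end also has a genuine gap. You claim the unit $\MM\wedge K_n\to\R\Psi\L\Phi(\MM\wedge K_n)$ is an equivalence using the triangle identities and the fact that the counit is an equivalence on cellular $\DM$-objects. That reasoning only shows $\L\Phi$ \emph{applied to} the unit is an equivalence; to conclude that the unit itself is one, you would need $\L\Phi$ to be conservative on some subcategory containing $\MM\wedge K_n$, and Lemma~\ref{lem:HZmod} supplies conservativity only on cellular $\MM$-modules --- but cellularity of $\MM\wedge K_n$ is exactly what you set out to prove. The paper avoids this circle by verifying cellularity only on the $\DM$ side (for $\L\Phi(\MM\wedge\MM)$) and applying Lemma~\ref{lem:HZmod} once. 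A smaller point: passing from the family $\{\psi_\alpha\}$ (which lands in the product) to a map into the coproduct $\bigvee_\alpha\Sigma^{p_\alpha,q_\alpha}\MM$ requires knowing that product and coproduct agree here; this is the content of Lemma~\ref{lem:motfinite} and should be invoked rather than asserted from the vanishing bounds alone.
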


\begin{proof}
By Theorem~\ref{theorem:splitpropertatemotive}, 
$\L \Z/\ell^\tr K_n\simeq\Sigma^{2n,n}M_n$ where $M_n$ is split proper Tate of weight $\geq 0$. 
By \cite[Corollary 2.71]{VV:EMspaces} and Theorem~\ref{thm:EMLpb}, 
$\hocolim_{n\to\infty} M_n$ is again a split proper Tate object of weight $\geq 0$, 
\ie, 
can be written in the form
\[
\hocolim M_n\simeq\bigoplus_\alpha \L \Z/\ell^\tr S^{p_\alpha,q_\alpha}
\]
with $p_\alpha\geq 2q_\alpha\geq 0$.
In the following computations, all functors must be appropriately derived. 
We have the equivalences 
\begin{multline*}\textstyle
\Z/\ell^\tr \MM\simeq\Z/\ell^\tr\colim\Sigma^{-2n,-n}\Sigma^\infty K_n
\simeq \colim\Sigma^{-2n,-n}\Sigma^\infty \Z/\ell^\tr K_n\\
\simeq \colim\Sigma^{-2n,-n}\Sigma^\infty \Sigma^{2n,n} M_n
\simeq \colim\Sigma^\infty M_n
\simeq \Sigma^\infty \colim M_n\\
\simeq \Sigma^\infty\bigoplus_\alpha \Z/\ell^\tr S^{p_\alpha,q_\alpha}
\simeq \Z/\ell^\tr \bigvee_\alpha \Sigma^\infty S^{p_\alpha,q_\alpha}.
\end{multline*}
In particular, 
$\Z/\ell^\tr \MM =\Phi(\MM\wedge \MM)$ is cellular. 
By Lemma~\ref{lem:HZmod}, 
we obtain the equivalences
\[
\MM\wedge \MM
\simeq 
\MM\wedge\bigvee_\alpha \Sigma^\infty S^{p_\alpha,q_\alpha}
\simeq
\bigvee_\alpha\Sigma^{p_\alpha,q_\alpha}\MM.\qedhere
\]
\end{proof}

\subsection{Comparison with étale Steenrod operations}
\label{subsection:etalerealization}

In this paragraph, $k$ is an algebraically closed field. We will denote by $a_\et$ the localization functors
\[a_\et\colon\H^\pt(\Sm_k)\to\H^\pt_\et(\Sm_k)\quad\text{and}\quad a_\et\colon\D(\Sm_k)\to\D_\et(\Sm_k).\]
Here $\D(\Sm_k)$ is the homotopy category of chain complexes of presheaves of abelian groups on $\Sm_k$, and $\D_\et(\Sm_k)$ is the full subcategory spanned by chain complexes satisfying étale hyperdescent or, equivalently, the category obtained from $\D(\Sm_k)$ by inverting maps inducing isomorphisms on étale homology sheaves.

The shifted Suslin--Voevodsky motivic complex $\ZZ(1)[1]\in\D_{\Nis}(\Sm_k)$ of weight one is quasi-isomorphic, 
as a chain complex of Nisnevich sheaves, 
to the presheaf represented by the multiplicative group scheme $\G_{\mathfrak{m}}$ \cite[Theorem 4.1]{MVW}. 
By assuming $1/m\in k$, 
the Kummer short exact sequence of étale sheaves $0\to\mu_m\to\G_{\mathfrak{m}}\stackrel{m}{\to}\G_{\mathfrak{m}}\to 0$ produces a quasi-isomorphism
$a_\et \ZZ/m(1)\simeq \mu_m$, whence
\begin{equation}
\label{eqn:motivictoetale1}
a_\et \ZZ/m(q)[p]
\simeq 
\mu_m^{\otimes q}[p]
\qquad(1/m\in k,\quad p,q\in\ZZ).
\end{equation}
This equivalence induces for every pointed simplicial presheaf $X$ on $\Sm_k$ a canonical map
\begin{equation}
\label{eqn:canonical}
\tilde \HH^{p,q}(X,\ZZ/m)
=
\tilde \HH^{p}_\Nis(X,\ZZ/m(q))
\to 
\tilde \HH^p_\et(X,\mu_m^{\otimes q})
\end{equation}
from motivic to \'etale cohomology. 
Moreover, it is easy to show that~(\ref{eqn:canonical}) is compatible with cup products and the bigraded suspension isomorphisms.
In étale cohomology the latter is the canonical isomorphism
\[
\tilde\HH_\et^{p+2}(\G_{\mathfrak{m}}\wedge\Sigma X,\mu_n^{\otimes q+1})
\cong
\tilde\HH_\et^p(X,\mu_n^{\otimes q}).
\]

\begin{lemma}
\label{lemma:motivic=etale}
Let $X$ be a pointed simplicial presheaf on $\Sm_k$. 
The canonical map (\ref{eqn:canonical}) 
\[
\tilde \HH^{p,q}(X,\ZZ/{\ell})
\to 
\tilde \HH^p_\et(X,\mu_{\ell}^{\otimes q})
\]
is an isomorphism when $p\leq q$.
\end{lemma}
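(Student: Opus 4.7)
The plan is to appeal to the Beilinson--Lichtenbaum theorem, known as a consequence of Voevodsky's proof of the Bloch--Kato conjecture at the prime $\ell\neq\Char k$. In its sheaf-theoretic form, this theorem asserts that the canonical morphism of complexes of Nisnevich sheaves on $\Sm_k$
\[
\Z/\ell(q) \longrightarrow \tau_{\leq q}\,R\pi_*\mu_\ell^{\otimes q}
\]
is a quasi-isomorphism, where $\pi\colon(\Sm_k)_\et\to(\Sm_k)_\Nis$ is the natural morphism of sites (so that $\pi^\ast=a_\et$).

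Using this, I would factor the comparison map (\ref{eqn:canonical}) as the composite
\[
\tilde\HH^{p,q}(X,\Z/\ell) = \tilde\HH^p_\Nis(X,\Z/\ell(q)) \xrightarrow{\sim} \tilde\HH^p_\Nis(X,\tau_{\leq q}R\pi_*\mu_\ell^{\otimes q}) \longrightarrow \tilde\HH^p_\Nis(X,R\pi_*\mu_\ell^{\otimes q}) = \tilde\HH^p_\et(X,\mu_\ell^{\otimes q}),
\]
where the first isomorphism is Beilinson--Lichtenbaum and the final equality is the standard $(\pi^\ast,R\pi_\ast)$-adjunction between Nisnevich and étale hypercohomology of a pointed simplicial presheaf. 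It then remains to prove that the middle arrow is an isomorphism whenever $p\leq q$.

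For this, consider the cofibre $C = \tau_{>q}R\pi_*\mu_\ell^{\otimes q}$; its Nisnevich cohomology sheaves $\scr H^t(C)$ vanish for $t\leq q$. In the hypercohomology spectral sequence
\[
E_2^{s,t} = \HH^s_\Nis(X,\scr H^t(C)) \Longrightarrow \tilde\HH^{s+t}_\Nis(X,C),
\]
any contribution $(s,t)$ with $s+t=j\leq q$, $s\geq 0$, and $\scr H^t(C)\neq 0$ would force $t\geq q+1$ and hence $s=j-t\leq -1$, a contradiction. Thus $\tilde\HH^j_\Nis(X,C)=0$ for $j\leq q$, and the long exact sequence attached to the distinguished triangle $\tau_{\leq q}R\pi_*\mu_\ell^{\otimes q}\to R\pi_*\mu_\ell^{\otimes q}\to C$ delivers the required isomorphism.

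The main hurdle is of course the Beilinson--Lichtenbaum theorem itself, which is a deep result with a long history; by the time of writing this paper it was firmly established, so it may be invoked as a black box and the remainder of the argument is purely formal.
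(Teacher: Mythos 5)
Your proposal and the paper's proof rest on the same deep input: Voevodsky's Beilinson--Lichtenbaum theorem at the prime $\ell$, which the paper invokes as \cite[Theorem 6.17]{VV:MCodd}. The difference is purely in the bookkeeping. The paper uses the form of the theorem stated for pointed simplicial smooth schemes, then extends to arbitrary pointed simplicial presheaves by noting that the comparison map~\eqref{eqn:Map} of mapping spaces turns homotopy colimits of representables into homotopy limits. You instead work with the sheaf-theoretic statement (the quasi-isomorphism $\Z/\ell(q)\simeq\tau_{\leq q}R\pi_*\mu_\ell^{\otimes q}$ of complexes of Nisnevich sheaves) and then conclude for every simplicial presheaf at once by a connectivity argument. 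Both are legitimate and equally short; yours has the small advantage of avoiding the ``every presheaf is a homotopy colimit of representables'' step, while the paper's version only needs the simplicial-scheme form of the Voevodsky input.

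One technical remark on your final step: the hypercohomology spectral sequence $E_2^{s,t}=\HH^s_\Nis(X,\scr H^t(C))\Rightarrow\tilde\HH^{s+t}_\Nis(X,C)$ is only conditionally convergent for a potentially unbounded complex $C$ and an arbitrary simplicial presheaf $X$, so reading off the vanishing of the abutment from the vanishing of $E_2$ on the line $s+t\leq q$ deserves a word of justification. The cleaner formulation, which also shows why $s\geq 0$ holds, is direct $t$-structure orthogonality: $\Z X$ is a (nonnegatively graded, under Dold--Kan) simplicial abelian presheaf, hence lies in $\D^{\leq 0}_\Nis$, while $C=\tau_{>q}R\pi_*\mu_\ell^{\otimes q}$ lies in $\D^{\geq q+1}_\Nis$, so $C[p]\in\D^{\geq q+1-p}_\Nis\subset\D^{\geq 1}_\Nis$ for $p\leq q$ and therefore $[\Z X, C[p]]=0$. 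With that substitution the argument is airtight.
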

\begin{proof} 
The lemma holds for pointed simplicial smooth schemes by~\cite[Theorem 6.17]{VV:MCodd}. 
A standard argument implies that it automatically holds for all simplicial presheaves. In more details, consider the map 
\begin{equation}\label{eqn:Map}
a_\et
\colon 
\R\Map(\ZZ X,\ZZ/{\ell}(q)[p])
\to
\R\Map(\ZZ X, \mu_{\ell}^{\otimes q}[p])
\end{equation}
between derived mapping spaces, which on $\pi_0$ gives the map of the lemma. If $X=Z_+$ for some $Z\in\Sm_k$ and $K$ is any simplicial set, applying the functor $[K,\mathord-]$ to~\eqref{eqn:Map} yields the instance of (\ref{eqn:canonical}) with the pointed simplicial smooth scheme $(Z\times K)_+$. Hence, \eqref{eqn:Map} is a weak equivalence for such $X$. Since an arbitrary simplicial presheaf is a homotopy colimit of representable presheaves and both sides of~\eqref{eqn:Map} transform homotopy colimits into homotopy limits,
the general case follows.
\end{proof}

Since mod $\ell$ motivic cohomology vanishes in negative weights, we obtain:
\begin{corollary}
\label{corollary:motivic=etale}
Let $X$ be a pointed simplicial presheaf on $\Sm_k$. 
\'Etale sheafification induces an isomorphism
\[
\tilde \HH^{\star}(X,\ZZ/\ell)[\tau^{-1}]
\cong 
\tilde \HH^{\ast}_\et(X,\mu_{\ell}^{\otimes \ast}),
\]
where $\tau\in \HH^{0,1}(\Spec k,\Z/\ell)\cong\mu_{\ell}(k)$ is a primitive $\ell$th root of unity.
\end{corollary}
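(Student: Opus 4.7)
The plan is to leverage Lemma~\ref{lemma:motivic=etale}, which gives the comparison isomorphism in the Beilinson--Lichtenbaum range $p \leq q$, together with the observation that $\tau$ becomes a unit after étale sheafification. Specifically, the class $\tau \in \HH^{0,1}(\Spec k, \ZZ/\ell) \cong \mu_{\ell}(k)$ maps under $a_\et$ to a generator of $H^0_\et(\Spec k, \mu_{\ell})$, and multiplication by such a generator induces an isomorphism $\mu_{\ell}^{\otimes q} \to \mu_{\ell}^{\otimes q+1}$ of étale sheaves on $\Sm_k$ for every $q \in \ZZ$ (using that $k$ is algebraically closed and contains $\mu_\ell$). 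Hence $\tau$ acts invertibly on the bigraded target $\tilde\HH^{\ast}_\et(X, \mu_{\ell}^{\otimes \ast})$, so the canonical map~\eqref{eqn:canonical} factors through a unique map
\[
\Phi\colon \tilde\HH^{\star}(X,\ZZ/\ell)[\tau^{-1}] \to \tilde\HH^{\ast}_\et(X, \mu_{\ell}^{\otimes \ast}).
\]

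To see that $\Phi$ is bijective, I would compute the bidegree $(p,q)$ part of the source as the filtered colimit
\[
\bigl(\tilde\HH^{\star}(X,\ZZ/\ell)[\tau^{-1}]\bigr)^{p,q} \;=\; \colim_n \tilde\HH^{p,q+n}(X,\ZZ/\ell),
\]
in which the transition maps are multiplication by $\tau$. For $n$ large enough that $q + n \geq p$, Lemma~\ref{lemma:motivic=etale} provides an isomorphism $\tilde\HH^{p,q+n}(X,\ZZ/\ell) \cong \tilde\HH^p_\et(X, \mu_{\ell}^{\otimes q+n})$, compatible with the $\tau$-action on both sides. Since $\tau$ acts invertibly on the étale side, the colimit stabilizes to $\tilde\HH^p_\et(X, \mu_{\ell}^{\otimes q})$, and $\Phi$ is identified with the resulting isomorphism. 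Equivalently, one can check surjectivity and injectivity directly: every $y \in \tilde\HH^p_\et(X,\mu_\ell^{\otimes q})$ is pulled back to some $x \in \tilde\HH^{p,q+n}(X,\ZZ/\ell)$ via Lemma~\ref{lemma:motivic=etale} after shifting by $a_\et(\tau)^n$, so $\Phi(x/\tau^n) = y$; and a class $x/\tau^m$ in the kernel of $\Phi$ satisfies $a_\et(x)=0$, whence multiplying by a further $\tau^n$ to reach the Beilinson--Lichtenbaum range forces $x\tau^n = 0$ motivically.

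I do not expect any serious obstacle here: this is a formal algebraic consequence of the Beilinson--Lichtenbaum-range isomorphism of Lemma~\ref{lemma:motivic=etale} and the invertibility of $\tau$ on étale cohomology. The only mild bookkeeping point is that representing a class of bidegree $(p,q)$ in the localized ring as $x/\tau^n$ with $x$ of bidegree $(p,q+n)$ requires choosing $n$ large enough to fall inside the Beilinson--Lichtenbaum range before invoking Lemma~\ref{lemma:motivic=etale}; the colimit presentation above handles this uniformly.
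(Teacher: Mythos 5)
Your proof is correct and fills in exactly the argument the paper leaves implicit: identify the bidegree $(p,q)$ piece of the localization as $\colim_n\tilde\HH^{p,q+n}(X,\ZZ/\ell)$, observe that for $n\gg 0$ the term lies in the range $p\leq q+n$ where Lemma~\ref{lemma:motivic=etale} applies, and use invertibility of $a_\et(\tau)$ on the étale side (available since $k$ is algebraically closed) to see that the colimit stabilizes to $\tilde\HH^p_\et(X,\mu_\ell^{\otimes q})$. This matches the paper's one-line deduction from the lemma.
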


\begin{remark}
	In \cite{Levine:2000} Levine constructs for $X\in\Sm_k$ an isomorphism $\HH^\star(X,\Z/\ell)[\tau^{-1}]\cong\HH^\ast_\et(X,\mu_\ell^{\tens \ast})$. It is likely that this isomorphism is the same as that of Corollary~\ref{corollary:motivic=etale} for representable presheaves.
\end{remark}

When $p\geq q\geq 0$, 
the chain complex $\ZZ/m(q)[p]\in\D_\Nis(\Sm_k)$ is concentrated in nonnegative degrees and its underlying simplicial presheaf 
(via the Dold--Kan correspondence) is, by definition, the motivic Eilenberg--Mac Lane space $\K(\ZZ/m(q),p)\in\H^\pt_\Nis(\Sm_k)$ which represents the functor $\tilde\HH^{p,q}(\ph,\ZZ/m)$. 
The underlying simplicial presheaf of $\mu_m^{\otimes q}[p]$ is the Eilenberg--Mac Lane object $\K(\mu_m^{\otimes q},p)\in\H^\pt_\et(\Sm_k)$ 
which represents $\tilde\HH_\et^p(\ph,\mu_m^{\otimes q})$. 
In view of the commutativity of the square
\begin{tikzmath}
	\diagram{\D^{\leq 0}_\Nis(\Sm_k) & \D^{\leq 0}_\et(\Sm_k) \\
	\H^\pt_\Nis(\Sm_k) & \H^\pt_\et(\Sm_k)\rlap, \\};
	\arrows (11-) edge node[above]{$a_\et$} (-12) (21-) edge node[below]{$a_\et$} (-22) (11) edge (21) (12) edge (22);
\end{tikzmath}
where the vertical arrows are the forgetful functors, we may restate~\eqref{eqn:motivictoetale1} as an equivalence
\begin{equation}
\label{eqn:motivictoetale2}
a_\et \K(\ZZ/m(q),p)
\simeq 
\K(\mu_m^{\otimes q},p)\qquad(1/m\in k,\quad p\geq q\geq 0)
\end{equation}
in $\H^\pt_\et(\Sm_k)$.
Using the shorthands $K_n=\K(\ZZ/\ell(n),2n)$ and $K_n^\et=\K(\mu_\ell^{\otimes n},2n)$, we can write $a_\et K_n\simeq K_n^\et$.
Combining~\eqref{eqn:motivictoetale2} and Corollary~\ref{corollary:motivic=etale}, we get:
\begin{corollary}\label{corollary:unstable}
	Étale sheafification induces an isomorphism
	\[\tilde \HH^{\star}(K_n,\ZZ/\ell)[\tau^{-1}]\cong \tilde \HH^{\ast}_\et(K_n^\et,\mu_{\ell}^{\otimes \ast}).\]
\end{corollary}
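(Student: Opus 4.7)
The plan is to chain together Corollary~\ref{corollary:motivic=etale} with the unstable identification~\eqref{eqn:motivictoetale2}. First, I would instantiate Corollary~\ref{corollary:motivic=etale} at the pointed simplicial presheaf $X = K_n$; this is legitimate because, via the Dold--Kan correspondence, the Eilenberg--Mac Lane space $K_n = \K(\ZZ/\ell(n),2n)$ is by definition an object of $\H^\pt_\Nis(\Sm_k)$, and in particular a pointed simplicial presheaf on $\Sm_k$. This step produces an isomorphism
\[\tilde \HH^{\star}(K_n,\ZZ/\ell)[\tau^{-1}] \cong \tilde \HH^{\ast}_\et(K_n,\mu_{\ell}^{\otimes \ast})\]
induced by étale sheafification.

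Second, I would rewrite the right-hand side in terms of $K_n^\et$. Étale cohomology $\tilde \HH^p_\et(\ph,\mu_\ell^{\otimes q})$ is representable on $\H^\pt_\et(\Sm_k)$ by the Eilenberg--Mac Lane object $\K(\mu_\ell^{\otimes q},p)$, and in particular it factors through the localization $a_\et\colon \H^\pt(\Sm_k) \to \H^\pt_\et(\Sm_k)$. Combining this with the equivalence $a_\et K_n \simeq K_n^\et$ coming from~\eqref{eqn:motivictoetale2} yields a natural isomorphism
\[\tilde\HH_\et^\ast(K_n, \mu_\ell^{\otimes \ast}) \cong \tilde\HH_\et^\ast(K_n^\et, \mu_\ell^{\otimes \ast}).\]
Splicing these two isomorphisms together gives exactly the statement.

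There is no real obstacle here: both ingredients have already been established in the preceding discussion. The only point worth double-checking is that the composite really coincides with the map induced by étale sheafification, and this follows from naturality. The first isomorphism \emph{is} the sheafification map by Corollary~\ref{corollary:motivic=etale}, while the second is the tautological identification rewriting a representable functor on $\H^\pt_\et(\Sm_k)$ via an isomorphic representing object; the two are formally compatible, so the composite is still the étale sheafification map.
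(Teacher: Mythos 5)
Your proof is correct and follows essentially the same approach as the paper, which simply states that the corollary follows by combining~\eqref{eqn:motivictoetale2} with Corollary~\ref{corollary:motivic=etale}. You correctly instantiate the latter at $X = K_n$ and then use the identification $a_\et K_n \simeq K_n^\et$ to rewrite the étale side, noting that the composite is still the sheafification map.
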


Let $\MMM^\star_\et$ be the algebra of bistable étale cohomology operations on $\H^\pt_\et(\Sm_k)$ with twisted $\mu_\ell$-coefficients. We then have a canonical isomorphism
\[
\MMM^{\star}_\et\cong\lim_{n\geq 0} \tilde \HH^{\ast+2n}_\et(K_n^\et, \mu_\ell^{\otimes\ast+n})
\]
where the transition map
\[\tilde \HH^{\ast+2n+2}_\et(K_{n+1}^\et, \mu_\ell^{\otimes\ast+n+1})\to \tilde \HH^{\ast+2n}_\et(K_n^\et, \mu_\ell^{\otimes\ast+n})\]
is induced by the canonical map $\G_{\mathfrak{m}}\wedge \Sigma K_n^\et\to K_{n+1}^\et$ and the suspension isomorphism
\[\tilde \HH^{\ast+2n+2}_\et(\G_{\mathfrak{m}}\wedge \Sigma K_n^\et, \mu_\ell^{\otimes\ast+n+1})\cong\tilde \HH^{\ast+2n}_\et(K_n^\et, \mu_\ell^{\otimes\ast+n}).\]

We now use for the first time our hypothesis on $k$. Since $k$ is algebraically closed, 
$\mu_\ell$ is a constant sheaf and so $K_n^\et$ is equivalent to the constant simplicial presheaf with value $\K(\mu_\ell(k)^{\otimes n},2n)\cong \K(\Z/\ell,2n)$. Moreover, if $\R\Gamma\colon\H_\et(\Sm_k)\to\Ho(\s\Set)$ is the derived global section functor, with left adjoint $c$, then $\R\Gamma (F)\simeq F(\Spec k)$ since $\Spec k$ has no nontrivial étale hypercovers. Thus, the unit $\id\to\R\Gamma \circ c$ is an isomorphism, \ie, $c$ is fully faithful. It therefore induces an isomorphism
$$
\tilde\HH^\ast(\K(\mu_\ell(k)^{\otimes n},2n),A)\cong\tilde\HH^\ast_\et(K_n^\et, A)
$$
for any abelian group $A$.
In particular, there is a canonical isomorphism \[\chi\colon\AAA^\ast\stackrel{\cong}{\to} \MMM^{\ast,0}_\et\] where $\AAA^\ast$ is the topological Steenrod algebra, and multiplication by $\tau$ induces isomorphisms $\MMM^{\ast,i}_\et\cong\MMM^{\ast,i+1}_\et$. If we define
\[P^i_\et=\tau^{i(\ell-1)}\chi(P^i)\in\MMM^{2i(\ell-1),i(\ell-1)}_\et\]
(note that $\tau^{\ell-1}$ is independent of the choice of $\tau$), we obtain the following presentation of the algebra $\MMM^\star_\et$: it is generated by $\tau^{\pm 1}\in\MMM^{0,\pm 1}_\et$, 
the Bockstein $\beta_\et\in\MMM^{1,0}_\et$, 
and the operations $P^i_\et\in\MMM^{2i(\ell-1),i(\ell-1)}_\et$ for $i\geq 1$; if $\ell$ is odd, 
the relations are the usual topological Adem relations, 
while if $\ell=2$ (in which case $P^i=\Sq^{2i}$) we get the topological Adem relations with additional $\tau$-multiples dictated by the second grading.

Since~\eqref{eqn:canonical} is compatible with the suspension isomorphisms and $a_\et$ is a functor, 
it induces an algebra map 
\begin{equation*}\label{eqn:map1}
\phi\colon\MMM^{\star}
\to 
\MMM^{\star}_\et.
\end{equation*}
On the other hand, 
comparing the motivic and étale Adem relations and sending generators to generators yields a well-defined algebra map
\begin{equation*}
\label{eqn:map2}
\psi\colon\AAA^{\star}
\to 
\MMM^{\star}_\et
\end{equation*}
identifying $\AAA^{\star}$ with the subalgebra $\MMM^{\ast,\geq 0}_\et$.

\begin{lemma}
\label{lem:adem}
The following triangle commutes:
\begin{tikzmath}
\diagram{
\AAA^{\star} & \MMM^{\star} \\  
& \MMM^{\star}_\et\rlap. \\};
\arrows (11-) edge[c->] (-12) (11) edge node[below left]{$\psi$} (22) (12) edge node[right]{$\phi$} (22);
\end{tikzmath}
\end{lemma}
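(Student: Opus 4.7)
The proof is essentially a check on a generating set. Both $\phi\circ\iota$ (where $\iota$ denotes the inclusion $\AAA^\star\hookrightarrow\MMM^\star$) and $\psi$ are algebra homomorphisms $\AAA^\star\to\MMM^\star_\et$, and $\AAA^\star$ is generated as an algebra by the Bockstein $\beta$, the reduced power operations $\{P^i\}_{i\geq 1}$, and the operations of multiplication by classes $\alpha\in\HH^\star(k,\Z/\ell)$. It therefore suffices to verify that $\phi$ sends each such generator to the element of $\MMM^\star_\et$ prescribed by $\psi$.

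For the Bockstein, $\beta$ is (both motivically and étale-locally) the connecting map of the short exact sequence of coefficients $0\to\Z/\ell\to\Z/\ell^2\to\Z/\ell\to 0$, and étale sheafification is exact and transports this sequence to its étale analog, so $\phi(\beta)=\beta_\et=\psi(\beta)$. For a class $\alpha\in\HH^\star(k,\Z/\ell)$, $\phi$ sends multiplication by $\alpha$ to multiplication by $a_\et(\alpha)$ by multiplicative compatibility of \eqref{eqn:canonical}, and this matches $\psi(\alpha)$ since the identification $\AAA^\star\cong\MMM^{\ast,\geq 0}_\et$ underlying $\psi$ is induced by $a_\et$.

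The substantive content is the identity $\phi(P^i)=P^i_\et$. My plan is to invoke Voevodsky's construction of $P^i$ as a coefficient of the motivic total Steenrod power, built in \cite[\S9]{VV:motivicalgebra} by pushforward along a finite étale cover associated with the symmetric group $\Sigma_\ell$ (equivalently, via the motivic cohomology of $B\mu_\ell$). The functor $a_\et$ is symmetric monoidal, preserves finite étale transfers, and is compatible with the cohomology of $B\mu_\ell$ (where its behaviour is controlled by Corollary~\ref{corollary:unstable}), so it carries the motivic total power to the étale total power; reading off coefficients yields $\phi(P^i)=\tau^{i(\ell-1)}\chi(P^i)=P^i_\et$. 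I expect the main obstacle to be the careful verification of these compatibilities, in particular the commutativity of $a_\et$ with transfers along $\mu_\ell$-covers. Should a direct tracking through the construction prove cumbersome, a fallback is an axiomatic uniqueness argument: $P^i$ is pinned down among operations of its bidegree by the Cartan formula, the instability relation $P^i(x)=x^\ell$ for $x$ of weight $i$, and vanishing in low bidegree, all of which $\phi(P^i)$ inherits from $P^i$ by naturality, uniquely identifying it with $P^i_\et$.
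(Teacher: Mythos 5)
Your overall strategy is the same as the paper's: reduce to checking generators, and for the reduced power operations invoke Steenrod's axiomatic uniqueness characterization. Your ``fallback'' argument for $P^i$ is in fact exactly what the paper does: it uses the motivic Cartan formula to show that $\chi^{-1}(\tau^{-i(\ell-1)}\phi(P^i))$ satisfies the axioms of \cite[VI, \S1]{Steenrod:1962}, then cites the uniqueness theorems \cite[VIII, Theorems 3.9 and 3.10]{Steenrod:1962} to conclude $\phi(P^i)=P^i_\et$. Your primary plan --- tracking the total power operation through the construction via $B\mu_\ell$ and finite \'etale transfers --- would in principle also work, and is closer in spirit to a direct verification, but as you anticipate it requires chasing compatibilities of $a_\et$ with the Thom-class/transfer machinery that the uniqueness route avoids; the paper sensibly takes the shorter path.

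For $\beta$, your idea is the right one but the execution as stated elides the actual content. It is not enough to say that the Bockstein arises from the coefficient sequence on both sides and that sheafification is exact: what must be shown is that the specific comparison equivalence $\alpha\colon a_\et K(\Z/\ell(n),2n)\simeq K(\mu_\ell^{\otimes n},2n)$ defined in \eqref{eqn:motivictoetale2} is \emph{compatible} with the two Bockstein maps. A priori, exactness only produces \emph{some} equivalence making the diagram of fiber sequences commute. The paper's proof carries out precisely this identification: it uses \cite[Proposition 6.3.5]{Hovey:1999} to produce a filler $\gamma$, and then, by passing to Eilenberg--Mac Lane objects of fixed degree and observing that the functor to \'etale sheaves of abelian groups is fully faithful there, argues that the filler is unique and therefore equals $\alpha$. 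That uniqueness step is the substance of the $\beta$ verification and should not be omitted. Finally, your worry about the generators given by multiplication by classes in $\HH^\star(k,\Z/\ell)$ is harmless here: recall that in this section $k$ is algebraically closed, so $\HH^\star(k,\Z/\ell)=\Z/\ell[\tau]$, and $\phi(\tau)=\tau$ is immediate from the construction of $\phi$ via $a_\et$.
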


\begin{proof}
It suffices to show that $\phi$ maps $P^i$ to $P^i_\et$ and $\beta$ to $\beta_\et$. Since the identification of $a_\et K_n$ with $K_n^\et$ is compatible with cup products, the motivic Cartan formula \cite[Proposition 9.7]{VV:motivicalgebra} shows that the operations $\chi^{-1}(\tau^{-i(\ell-1)}\phi(P^i))\in\AAA^\ast$ satisfy the axioms 
(1)--(5) of \cite[VI, \S 1]{Steenrod:1962} (if $\ell=2$, it shows that the operations $\chi^{-1}(\tau^{-\lfloor i/2\rfloor}\phi(\Sq^i))$ satisfy the axioms (1)--(5) of \cite[I, \S 1]{Steenrod:1962}). 
Therefore, by \cite[VIII, Theorems 3.9 and 3.10]{Steenrod:1962} and the definition of $P^i_\et\in\MMM^\star_\et$, $\phi(P^i)=P^i_\et$.

It remains to show that $\phi(\beta)=\beta_\et$.
Let $\alpha$ denote the identification~\eqref{eqn:motivictoetale2}. 
The right column in the diagram
\begin{tikzmath}
\diagram{
\Omega a_\et \K(\Z/\ell^2(n),2n+1) & \Omega \K(\mu_{\ell^2}^{\tens n},2n+1) \\
a_\et \K(\Z/\ell(n),2n) & \K(\mu_{\ell}^{\tens n},2n) \\
a_\et \K(\Z/\ell(n),2n+1) & \K(\mu_{\ell}^{\tens n},2n+1) \\
a_\et \K(\Z/\ell^2(n),2n+1) & \K(\mu_{\ell^2}^{\tens n},2n+1)  \\
};
\arrows (11) edge (21) (12) edge (22) (11-) edge node[above]{$\Omega\alpha$} node[below]{$\simeq$} (-12) (21) edge node[left]{$a_\et\beta$} (31) (31) edge (41) (22) edge node[right]{$\beta$} (32) (32) edge (42) (21-) edge[dashed] node[above]{$\gamma$} node[below]{$\simeq$} (-22) (31-) edge node[above]{$\alpha$} node[below]{$\simeq$} (-32) (41-) edge node[above]{$\alpha$} node[below]{$\simeq$} (-42);
\end{tikzmath}
is a fiber sequence, 
and so is the left column because $a_\et$ preserves finite homotopy limits. 
Since the bottom square commutes, 
there exists an equivalence $\gamma$ rendering the diagram commutative \cite[Proposition 6.3.5]{Hovey:1999}. 
To show that $\phi(\beta)=\beta_\et$, 
it suffices to show that $\gamma=\alpha$. 
Taking into account that the canonical equivalences
\[
\Omega a_\et \K(\Z/\ell^2(n),2n+1)
\simeq 
a_\et \K(\Z/\ell^2(n),2n)
\quad\text{and}\quad 
\Omega \K(\mu_{\ell^2}^{\tens n},2n+1)
\simeq 
\K(\mu_{\ell^2}^{\tens n},2n)\]
are compatible with $\alpha$, 
we can identify the top square with:
\begin{tikzmath}
\diagram{
a_\et \K(\Z/\ell^2(n),2n) 
& \K(\mu_{\ell^2}^{\tens n},2n) \\
a_\et \K(\Z/\ell(n),2n)
& \K(\mu_{\ell}^{\tens n},2n)\rlap. 
\\};
\arrows (11) edge node[left]{$\mathrm{mod}\,\ell$} (21) (11-) edge node[above]{$\alpha$} (-12) (12) edge node[right]{$\ell$} (22) (21-) edge node[below]{$\gamma$} (-22);
\end{tikzmath}
Here all objects are Eilenberg--Mac Lane objects of degree $2n$. 
The full subcategory of $\H^\pt_\et(\Sm_k)$ spanned by these objects is equivalent, via $\pi_{2n}$, to the category 
of étale sheaves of abelian groups on $\Sm_k$ \cite[Proposition 7.2.2.12]{HTT}. 
Therefore there exists at most one bottom horizontal map making the diagram commutative.
Since this square commutes with $\alpha$ instead of $\gamma$, we must have $\gamma=\alpha$.
\end{proof}

\subsection{Proof of the main theorem}
\label{sub:proofmain}

In this section we prove the remaining parts of Theorem~\ref{theorem:main}, namely (1) and (3). 
Let $S$ be an essentially smooth scheme of a field and $\ell\neq\Char S$ a prime number.

We first prove (1) when $S=\Spec k$ for a perfect field $k$. We already know from the computation of $\AAA^\star$ in \cite{VV:motivicalgebra} that the claimed basis of $\MMM^\star$ is a basis of $\AAA^\star$, so it will suffice to prove that $\AAA^\star=\MMM^\star$.  Theorem~\ref{theorem:splitpropertatemotive} implies,
cf.~\cite[Lemma 3.50]{VV:EMspaces}, that 
there exist split proper Tate motives $\AAA$ and $\MMM$ whose cohomology agrees with $\AAA^{\star}$ and $\MMM^{\star}$, 
respectively, 
and a map $\MMM\to\AAA$ which is invariant under change of perfect base field and induces the inclusion $\AAA^{\star}\hookrightarrow\MMM^{\star}$. 
Thus, $\AAA^{\star}=\MMM^{\star}$ if and only if $\MMM\to\AAA$ is an equivalence. 
By~\cite[Corollary 2.70 (2)]{VV:EMspaces} we may assume that $k$ is algebraically closed. 
In this case, 
$\MM^{\star}\cong\ZZ/\ell[\tau]$ where $\tau\in \HH^{0,1}(\Spec k,\ZZ/\ell)\cong \mu_\ell(k)$ is a primitive $\ell$th root of unity.

We claim that (1) follows from the following statements:
\begin{enumerate}
	\item[(i)] $\AAA^{\star}/\tau\AAA^{\star}\to \MMM^{\star}/\tau\MMM^{\star}$ is injective.
	\item[(ii)] $\AAA^{\star}[\tau^{-1}]\to\MMM^{\star}[\tau^{-1}]$ is surjective.
\end{enumerate}
Indeed, 
let $x\in\MMM^{\star}$. 
By~(ii), 
$\tau^nx$ belongs to $\AAA^{\star}$ for some $n\geq 0$. 
If $n>0$, 
$\tau^nx$ becomes zero in $\MMM^{\star}/\tau\MMM^{\star}$. 
By~(i) it is zero in $\AAA^{\star}/\tau\AAA^{\star}$. 
Thus, 
there exists $y\in\AAA^{\star}$ such that $\tau^nx=\tau y$. 
Since $\MMM^{\star}$ is the cohomology of a split Tate object, 
it has no $\tau$-torsion. 
Hence, 
$\tau^{n-1}x=y$ and so $\tau^{n-1}x$ belongs to $\AAA^{\star}$. 
Induction on $n$ implies $x\in\AAA^{\star}$.

The injectivity of $\AAA^{\star}/\tau\AAA^{\star}\to \MMM^{\star}/\tau\MMM^{\star}$ is proved in~\cite[Proposition 3.56]{VV:EMspaces}. 
To finish the proof we show that $\AAA^{\star}[\tau^{-1}]\to\MMM^{\star}[\tau^{-1}]$ is surjective.
By Lemma~\ref{lem:adem}, there is a commutative diagram
\begin{tikzmath}
\diagram{
\AAA^{\star}[\tau^{-1}] 
& \MMM^{\star}[\tau^{-1}] 
& \tilde \HH^{\ast + 2n, \ast + n}(K_n,\Z/\ell)[\tau^{-1}] \\  
& \MMM^{\star}_\et & \tilde \HH^{\ast + 2n}_\et(K_n^\et,\mu_\ell(k)^{\tens \ast +n}) 
\\};
\arrows (11-) edge (-12) (11) edge node[below left]{$\psi$} node[above right]{$\simeq$} (22) (12) edge node[right]{$\phi$} (22) (12-) edge (-13) (22-) edge (-23) (13) edge node[right]{$\simeq$} (23);
\end{tikzmath}  
where the rightmost map is an isomorphism by Corollary~\ref{corollary:unstable}.
We are thus reduced to showing that $\MMM^{\star}[\tau^{-1}]\to\MMM^{\star}_\et$ is injective. 
If $x=(x_0,x_1,\dotsc)\in\MMM^{\star}$ maps trivially to $\MMM^{\star}_\et$ then $x_n$ maps trivially to 
$\tilde \HH^{\ast +2n,\ast + n}(K_n,\ZZ/\ell)[\tau^{-1}]$ for all $n$. 
By Theorem~\ref{theorem:splitpropertatemotive}, ${\bf L}{\ZZ/\ell}^\tr K_n$ is a split proper Tate motive and in particular $\tilde \HH^{\ast +2n,\ast + n}(K_n,\ZZ/\ell)$ has no $\tau$-torsion. 
It follows that $x_n=0$ for all $n$, whence $x=0$.
This concludes the proof of assertion (1) of Theorem~\ref{theorem:main} when the base is a perfect field.

We now turn to the proof of assertion (3). By Corollaries~\ref{cor:Astablemaps} and~\ref{cor:split}, we have
\begin{equation}\label{eqn:M**}
	\MMM^\star\cong[\MM,\Sigma^{\star}\MM]\cong [\MM\wedge\MM,\Sigma^\star\MM]_{\MM}\cong [\bigvee_\alpha\Sigma^{p_\alpha,q_\alpha}\MM,\Sigma^\star\MM]_{\MM}\cong\prod_{\alpha}\MM^{\ast -p_\alpha, \ast -q_\alpha}.
\end{equation}
To determine this family of bidegrees $(p_\alpha,q_\alpha)$, we can again assume, by Theorem~\ref{thm:EMLpb}, that $S$ is the spectrum of an algebraically closed field, so that $\MM^\star\cong\Z/\ell[\tau]$. In this case we also know by part (1) that $\MMM^\star\cong\bigoplus_{\gamma}\MM^{\ast -r_\gamma, \ast -s_\gamma}$ where the family of bidegrees $(r_\gamma,s_\gamma)$ is the desired one. In particular, $\MMM^{p,q}$ is finite for every $(p,q)\in\Z$, which shows that the product~\eqref{eqn:M**} is a direct sum. Thus, we find a bigraded isomorphism of free $\Z/\ell[\tau]$-modules
\[\bigoplus_{\alpha}\Sigma^{-p_\alpha,-q_\alpha}\Z/\ell[\tau]\cong\bigoplus_{\gamma}\Sigma^{-r_\gamma,-s_\gamma}\Z/\ell[\tau],\]
(where $\Sigma^{i, j}$ indicates shifting of the bidegree), and it follows that the indexing families $(p_\alpha,q_\alpha)$ and $(r_\gamma,s_\gamma)$ coincide.

Finally, we prove (1) of Theorem~\ref{theorem:main} in general. Choose an essentially smooth morphism $f\colon S\to\Spec k$ where $k$ is a perfect field. By Corollary~\ref{cor:vanish}, for any $(p,q)\in\Z$, there are at most finitely many $\alpha$ such that $\MM_S^{p+p_\alpha,q+q_\alpha}\neq 0$. It follows that the product in~\eqref{eqn:M**} is a direct sum, and hence that the map
\[f^\ast\colon \MMM^\star_k\to \MMM^\star_S\]
induces an isomorphism of left $\MM^\star_S$-modules $\MM^\star_S\tens_{\MM^\star_k}\MMM^\star_k\cong\MMM^\star_S$. From the commutative square~\eqref{eqn:A**pullback}, we obtain $\AAA^\star_S=\MMM^\star_S$. The more precise statement of assertion (1) is automatic since $f^\ast\colon\MMM^\star_k\to\MMM^\star_S$ is an algebra map.

\section{Commutativity of the fundamental square}
\label{section:proofs}

In this section we prove Theorem~\ref{theorem:CDcomm}. 
Throughout, 
the base scheme is a perfect field $k$ of characteristic exponent $c$, 
and $\ell\neq c$ is a fixed prime number.

\subsection{The $\ldh$-topology}
\label{subsection:lprimetopology}
We start with the definition of the $\ldh$-topology introduced in \cite{KellyThesis}.
The basic idea is to enlarge the $\cdh$-topology of Suslin--Voevodsky \cite{SV:singularhomology} by including finite flat surjective maps of degree prime to $\ell$. 

We say that a family of maps
$\{ V_j \to X \}_{j \in J}$ is a \emph{refinement} of another family $\{ U_i \to X \}_{i \in I}$ if for each $j\in J$ there exists an $i_j\in I$ and a factorization 
$V_j \to U_{i_j} \to X$. 

\begin{definition}
\label{definition:fpsl}
Let $\ell\neq \Char k$ be a prime number.
\begin{enumerate}
\item 
An \emph{$\fpsl$-cover} (fini-plat-surjectif-premier-à-$\ell$) is a finite flat
surjective map $f\colon U \to X$ such that $f_\ast\scr O_U$ is a free $\scr O_X$-module of rank prime to $\ell$. 
\item 
An \emph{$\ldh$-cover} is a finite family of maps of finite type 
$\{U_i \to X\}$ which admits a refinement $\{V_j'\to V_j\to X\}$ where $\{V_j\to X\}$ is a $\cdh$-cover and each $V_j'\to V_j$ is an $\fpsl$-cover.
\end{enumerate}
\end{definition}

\begin{definition}
\label{definition:ldhtopology}
The \emph{$\ldh$-topology} on $\Sch_k$ is the topology generated by the $\ldh$-covers. If $\scr C\subset\Sch_k$ is a subcategory, the $\ldh$-topology on $\scr C$ is the topology induced by the $\ldh$-topology on $\Sch_k$.
\end{definition}

\begin{remark}
The $\ldh$-topology is a ``global'' version of the topology of $\ell'$-alterations appearing in \cite{Gabber1} and \cite{Gabber2}.
\end{remark}

\begin{theorem}[Gabber {\cite[Theorem 1.3]{Gabber1}, \cite[IX, Theorem 1.1]{Gabber2}}] 
\label{theorem:gabberGlobal}
Let $X\in\Sch_k$ and let $Z\subset X$ be a nowhere dense closed subset. There exists a map $f\colon Y\to X$ in $\Sch_k$ such that:
\begin{enumerate}
	\item $Y$ is smooth and quasi-projective over $k$.
	\item 
	$f$ is proper, surjective, and sends generic points to generic points.
	\item 
	For each generic point $\xi$ of $X$ there is a unique point $\eta$ of $Y$ over it,
	and $[k(\eta):k(\xi)]$ is finite of degree prime to $\ell$.
	\item $f^{-1}(Z)\subset Y$ is a divisor with strict normal crossings.
\end{enumerate}
\end{theorem}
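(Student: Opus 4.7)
The statement is Gabber's theorem on $\ell'$-alterations proved in \cite{Gabber1, Gabber2}, and any honest proposal must concede at the outset that a genuine proof runs to hundreds of pages; what I can realistically sketch here is Gabber's three-stage strategy. The first stage would reduce the global statement to a local uniformization problem. By Noetherian induction on the closed subset of $X$ where the conclusion fails, combined with a patching argument using refinements in the $\ldh$-topology, it suffices to construct, for each point $x \in X$, a prime-to-$\ell$ alteration of the required form over some neighborhood of $x$, after which the local pieces can be assembled into a single global alteration.

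For the second stage I would start from de Jong's alterations theorem, which produces a proper surjective morphism $Y_0 \to X$ with $Y_0$ smooth and quasi-projective but with no control on the generic degree $n = [k(Y_0):k(X)]$. Taking the Galois closure $L/k(X)$ with Galois group $G$ and letting $H \leq G$ be a Sylow $\ell$-subgroup, the fixed field $F = L^H$ satisfies $[F:k(X)]$ prime to $\ell$. The normalization of $X$ in $F$ is then an alteration of prime-to-$\ell$ degree, realized geometrically as a quotient by the action of the $\ell$-group $H$ on the normalization of $Y_0$ in $L$; the issue is that this quotient is typically very singular along the fixed locus of $H$.

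The third and hardest stage would resolve these singularities by further prime-to-$\ell$ alterations, while simultaneously arranging the preimage of $Z$ to be a strict normal crossings divisor. Here I would invoke Gabber's log-geometric machinery: equip everything with the natural log structure coming from $Z$ together with the ramification divisor, then apply Gabber's local uniformization theorem for log-regular schemes to modify the tame $H$-action via a further prime-to-$\ell$ alteration into something log-smooth, after which the underlying scheme is smooth and the log structure records the strict normal crossings divisor $f^{-1}(Z)$. The main obstacle is unambiguously this last step: the local uniformization theorem for log-regular schemes with tame $\ell$-group action is the deep technical core of \cite{Gabber2}, and any serious proof must at this point defer to Gabber's detailed analysis of tame covers in logarithmic geometry rather than attempt a further compression.
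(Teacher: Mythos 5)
The paper offers no proof of this statement: it is cited verbatim as Gabber's theorem from \cite{Gabber1} and \cite{Gabber2} and used as a black box, much as one would cite resolution of singularities in characteristic zero. There is therefore no in-paper proof to compare your sketch against, and acknowledging at the outset that the honest answer is ``defer to Gabber'' is exactly the right move.

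As a summary of Gabber's strategy your three stages are broadly accurate: reduction to a local uniformization problem by Noetherian induction; de Jong's alterations followed by the Sylow trick to cut the generic degree down to something prime to $\ell$ at the cost of taking an $\ell$-group quotient; and the hard log-geometric local uniformization that resolves that quotient while keeping the alteration degree prime to $\ell$ and producing the strict normal crossings divisor. One inaccuracy worth flagging: the patching in stage one is not phrased by Gabber in terms of the $\ldh$-topology, which is Kelly's later reformulation introduced in this very paper precisely to repackage Gabber's output (see the remark following Definition~\ref{definition:ldhtopology} identifying the $\ldh$-topology as a ``global'' version of Gabber's topology of $\ell'$-alterations, and Corollary~\ref{corollary:l'resolution}, which is deduced \emph{from} Theorem~\ref{theorem:gabberGlobal} rather than being an ingredient in its proof). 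Gabber's own local-to-global step runs through his topology of $\ell'$-alterations together with Raynaud--Gruson flatification and Noetherian induction. With that citation corrected, the sketch is a reasonable orientation to the cited works, with the understood caveat that the log-regular local uniformization is the genuinely deep core and cannot be compressed further here.
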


We shall make use of the following formulation of the Raynaud--Gruson flattening theorem \cite{RG}.
\begin{theorem}
[{\cite[Theorem 2.2.2]{SV:relativecycles}}]
\label{theorem:platification}
Let $p: X \to S$ be a map of Noetherian schemes and $U$ an open subscheme in $S$ 
such that $p$ is flat over $U$. 
Then there exists a closed subscheme $Z$ in $S$ such that $U \cap Z = \varnothing$, 
and the proper transform of $X$ with respect to the blow-up $\operatorname{Bl}_Z S \to S$ with center in $Z$ is flat over $S$.
\end{theorem}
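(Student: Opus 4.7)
The plan is to reduce the statement to a local algebraic question and then argue by Noetherian induction on the non-flat locus. Since the conclusion is local on the base, I may assume $S = \Spec A$ with $A$ Noetherian; after replacing $X$ by an appropriate coherent $\OO_X$-module $\widetilde M$ of finite presentation (or an affine model), the content becomes: given a finitely generated $A$-module $M$ that is flat over the open $U$, produce an ideal $I \subset A$ with $V(I) \cap U = \varnothing$ such that the strict transform of $\widetilde M$ on the blow-up $\operatorname{Bl}_I \Spec A$ becomes $\operatorname{Bl}_I \Spec A$-flat. Let $F \subset S$ denote the closed non-flat locus of $p$, which is disjoint from $U$ by hypothesis. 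The inductive strategy is to produce at each stage a $U$-admissible blow-up (\ie, one with center in $S \setminus U$) whose strict transform has strictly smaller non-flat locus; after finitely many steps one obtains flatness, and the resulting sequence of blow-ups can be assembled into a single blow-up of $S$ because a composite of blow-ups of coherent ideals is itself dominated by the blow-up of a single ideal (formed as the product of the pullbacks of the successive centers).

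The technical heart of a single inductive step is a Fitting ideal analysis. Locally choose a finite presentation $A^m \xrightarrow{\phi} A^n \to M \to 0$. Since $\widetilde M|_U$ has locally constant rank $r$, one has $\mathrm{Fitt}_r(M)|_U = \OO_U$ and $\mathrm{Fitt}_{r-1}(M)|_U = 0$; the candidate center $I$ is a suitable modification of $\mathrm{Fitt}_{r-1}(M)$, whose vanishing locus therefore lies in $S \setminus U$. On $\operatorname{Bl}_I \Spec A$ the pullback of $I$ becomes invertible, and the strict transform of $\widetilde M$ is obtained by killing the subsheaf of sections annihilated by a power of the pulled-back $I$. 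A local coordinate computation on the exceptional divisor then shows that the new Fitting ideals become principal (or trivial) near the exceptional locus, which is the mechanism eventually forcing flatness.

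The main obstacle is ensuring that the Noetherian induction genuinely terminates: after each strict transform one must verify that the non-flat locus of the new morphism is strictly smaller inside the preimage of $F$, which requires careful tracking of how Fitting ideals behave under strict transforms together with the fibrewise flatness criterion from \cite{EGA4-3}. This delicate bookkeeping -- which is essentially the technical substance of Raynaud--Gruson \cite{RG} -- is what distinguishes the theorem from elementary generic flatness statements and is the point at which the proof becomes substantively hard.
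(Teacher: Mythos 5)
The paper gives no proof of this statement: it is cited directly as Theorem~2.2.2 of Suslin--Voevodsky \cite{SV:relativecycles}, who in turn quote it from Raynaud--Gruson \cite{RG}. There is therefore no proof in the paper to compare your sketch against, and for the purposes of this paper the theorem is a black box.

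As to the sketch itself, you candidly acknowledge that the load-bearing steps --- the precise choice of center (a ``suitable modification of $\mathrm{Fitt}_{r-1}(M)$''), the verification that each strict transform strictly shrinks the non-flat locus, and hence the termination of the Noetherian induction --- are left open, so this is an outline rather than a proof, and the gap you identify is the whole content of the theorem. Two further points are worth naming. First, the reduction from a morphism of Noetherian schemes $X\to S$ to a single finitely presented $A$-module $M$ is not automatic when $X$ is not affine over $S$: one must produce one blow-up of $S$ that flattens every affine piece of $X$ at once, which requires a separate quasi-compactness argument (and is the reason Raynaud--Gruson formulate the result for finite-type, quasi-separated $S$-schemes rather than for modules alone). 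Second, the actual proof of Raynaud--Gruson does not proceed via Fitting ideals: it is organized around their technique of \emph{d\'evissage}, which filters the module by pieces that are generically flat of controlled type and reduces the flattening problem to the case of ideal sheaves. A Fitting-ideal approach to flattening by blow-up can be pushed through in special situations, but the general termination argument is substantially harder than the sketch suggests and is structurally different from the argument in \cite{RG}.
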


\begin{corollary}
\label{corollary:l'resolution}
For every $X\in\Sch_k$ there exists an $\ldh$-cover $\{U_i \to X\}$ where each $U_i$ is smooth and quasi-projective over $k$.
\end{corollary}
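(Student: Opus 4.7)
The plan is to proceed by Noetherian induction on $\dim X$, supplemented by a secondary induction on the dimension of the singular locus. The base case $\dim X = 0$ is immediate from the perfectness of $k$: in this case $X_{\red}$ is a finite disjoint union of spectra of finite separable extensions of $k$, hence smooth and quasi-projective, and $X_{\red}\to X$ is a cdh-cover.

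For the inductive step, the strategy is to combine Theorems~\ref{theorem:gabberGlobal} and~\ref{theorem:platification}. First reduce to $X$ reduced via the cdh-cover $X_{\red}\to X$. Then apply Gabber's theorem to obtain a proper surjective morphism $f\colon Y\to X$ with $Y$ smooth and quasi-projective over $k$, inducing residue field extensions of degree prime to $\ell$ at each generic point. Properness of $f$ together with quasi-finiteness at the generic fibers yields a dense open $U\subset X$ over which $f$ is finite flat of constant prime-to-$\ell$ degree on each connected component. Applying Raynaud--Gruson produces a nowhere dense closed $Z\subset X\setminus U$ and a blow-up $p\colon\tilde X = \mathrm{Bl}_Z X\to X$ such that the proper transform $\tilde Y$ of $Y$ is flat—hence finite flat of prime-to-$\ell$ degree—over $\tilde X$, giving an $\fpsl$-cover $\tilde Y\to\tilde X$. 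Since the abstract blow-up yields a cdh-cover $\{p,\,Z\hookrightarrow X\}$, the family $\{\tilde Y\to X,\,Z\hookrightarrow X\}$ is an $\ldh$-cover of $X$.

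Now the induction hypothesis applied to $Z$ (of dimension strictly less than $\dim X$) provides an $\ldh$-cover of $Z$ by smooth quasi-projective schemes, reducing the problem to producing such a cover of $\tilde Y$. Here $\tilde Y$ is reduced, quasi-projective, and coincides with the smooth scheme $Y$ over the dense open $p^{-1}(X\setminus Z)$, so $\tilde Y_{\mathrm{sing}}$ has dimension strictly less than $\dim X$. Iterating the Gabber + Raynaud--Gruson construction on $\tilde Y$ in place of $X$, and invoking the secondary Noetherian induction on the singular locus dimension, should yield the desired $\ldh$-cover of $\tilde Y$. The main technical obstacle is verifying termination of this iteration: since $\tilde Y$ has the same dimension as $X$, one must identify an invariant—such as the pair $(\dim X,\dim X_{\mathrm{sing}})$ ordered lexicographically—that strictly decreases with each round, which requires care in choosing the Gabber center and the Raynaud--Gruson flat locus so that the proper transform at each stage acquires singularities only in the preimage of a strictly lower-dimensional subscheme.
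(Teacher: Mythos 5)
Your overall strategy — Gabber's alteration, then Raynaud--Gruson to flatten over a blow-up, then Noetherian induction via the abstract blow-up $\cdh$-square — is exactly the paper's. But you have made the proof much harder than it needs to be by trying to produce an $\ldh$-cover of $\tilde Y$ (the proper transform) by smooth schemes, which forces you into the secondary induction on the singular locus dimension whose termination you correctly flag as delicate.

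That iteration is unnecessary. By Definition~\ref{definition:fpsl}, a finite family $\{U_i\to X\}$ is an $\ldh$-cover as soon as it \emph{admits a refinement} of the form ``$\cdh$-cover followed by $\fpsl$-covers''; the $U_i$ themselves need not appear in that refinement. Since $\tilde Y=Y'$ is a proper transform, the map $Y'\to X$ factors through the Gabber alteration $Y\to X$, and $Y$ is already smooth and quasi-projective. So one takes the cover of $X$ to be $\{Z_j\to X\}\cup\{Y\to X\}$ (with $\{Z_j\to Z\}$ the $\ldh$-cover from the induction hypothesis on the proper closed subscheme $Z$): every member is smooth and quasi-projective, and the family is refined by $\{Z_j\to X\}\cup\{Y'\to X\}$, which is the composite of the $\cdh$-cover $\{Z\to X, X'\to X\}$ with the $\ldh$-covers $\{Z_j\to Z\}$ and $\{Y'\to X'\}$. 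There is nothing further to cover, and no issue of termination. (Two smaller points: Raynaud--Gruson only gives that $Y'\to X'$ is Zariski-locally an $\fpsl$-cover, since the definition of $\fpsl$ asks for $f_\ast\scr O$ to be \emph{globally} free; and the paper's first reduction is to $X$ integral via the $\cdh$-cover by irreducible components, which lets one cite Gabber with a single generic point. Both are cosmetic compared to the main simplification above.)
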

\begin{proof}
The proof uses Noetherian induction. 
Suppose the result holds for all proper closed subschemes of $X$. 
We may assume that $X$ is integral since the set of inclusions of irreducible components is a $\cdh$-cover. 
Let $Y \to X$ be the map provided by Theorem~\ref{theorem:gabberGlobal}, so that $Y$ is smooth and quasi-projective over $k$.
By Theorem~\ref{theorem:platification}, 
there exists a blowup with nowhere dense center $X' \to X$ such that the proper transform $Y' \to X'$ of $Y \to X$ is Zariski-locally an $\fpsl$-cover. 
If $Z \subset X$ is a proper closed subscheme such that $X' \to X$ is an isomorphism outside of $Z$, then $\{Z \to X, X' \to X \}$ is a $\cdh$-cover. 
By the inductive hypothesis, 
there exists an $\ldh$-cover $\{Z_j \to Z\}_{j \in J}$ with each $Z_j$ quasi-projective and smooth over $k$. We claim that $\{Z_j\to X\}_{j\in J}\cup\{Y\to X\}$ is an $\ldh$-cover. Indeed, it is refined by $\{Z_j\to X\}_{j\in J}\cup\{Y'\to X\}$ which is the composition of the $\cdh$-cover $\{Z\to X, X'\to X\}$ with the $\ldh$-covers $\{Z_j\to Z\}_{j\in J}$ and $\{Y'\to X'\}$.
\end{proof}

It is easy to show that any presheaf of $\zll$-modules with transfers on $\Sch_k$ is an $\fpsl$-sheaf. 
In particular, 
such a presheaf is a $\cdh$-sheaf if and only if it is an $\ldh$-sheaf. 
More generally, we have:

\begin{theorem}[{\cite[Theorem 3.4.17]{KellyThesis}}]
\label{thm:cdh=ldh}
Suppose $F$ is a presheaf of $\zll$-modules with transfers on $\Sch_k$.
Then for every $X\in\Sch_k$, 
the canonical map
\[ 
H^n_{\cdh}(X, a_\cdh F) 
\to 
H^n_{\ldh}(X, a_{\ldh} F) 
\]
is an isomorphism for all $n\geq 0$.
\end{theorem}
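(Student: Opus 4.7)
The strategy is to show that the $\fpsl$ part of the $\ldh$-topology is invisible to presheaves of $\zll$-modules with transfers, so that all $\ldh$-cohomology can be computed in the $\cdh$-topology. The excerpt already records that such $F$ are $\fpsl$-sheaves via the trace construction; the plan is to upgrade this observation to full $\fpsl$-hyperdescent and then splice with $\cdh$-descent.

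First I would promote the Čech-level statement to $\fpsl$-hyperdescent. For an $\fpsl$-cover $p\colon U\to X$ of constant rank $n$ coprime to $\ell$, the transpose of the graph of $p$ defines a class in $\Cor(X,U)$ and hence a transfer $\mathrm{tr}_p\colon F(U)\to F(X)$. A direct computation of the composition of correspondences gives $\mathrm{tr}_p\circ p^*=n\cdot\mathrm{id}_{F(X)}$, and since $n$ is invertible in $\zll$, $p^*$ is split. Applying this construction at every level of the Čech nerve $U^{\times_X(\bullet+1)}$, the family of traces should assemble into a contracting homotopy on the augmented Čech complex $F(X)\to F(U^{\bullet+1})$. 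By the standard criterion, a presheaf whose Čech complex along every covering of a generating family is acyclic satisfies hyperdescent, yielding $a_\fpsl F=F$ and $H^n_\fpsl(X,F)=0$ for $n>0$.

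Second, I would use the compatibility of the $\cdh$-topology with $\zll$-transfers (recorded in the paper) to deduce that $a_\cdh F$ is again a presheaf of $\zll$-modules with transfers. Step~1 applied to $a_\cdh F$ then shows that $a_\cdh F$ is simultaneously a $\cdh$-sheaf and an $\fpsl$-sheaf. Since every $\ldh$-cover is by Definition~\ref{definition:fpsl} refined by a $\cdh$-cover followed by $\fpsl$-covers, a presheaf which is both a $\cdh$-sheaf and an $\fpsl$-sheaf is automatically an $\ldh$-sheaf; therefore $a_\ldh F\cong a_\cdh F$, settling the case $n=0$. For the higher cohomology, I would invoke the Leray-type spectral sequence for the topology inclusion $\cdh\subset\ldh$,
\[E_2^{p,q}=H^p_\cdh\bigl(X,\mathcal{H}^q_\ldh(F)\bigr)\Longrightarrow H^{p+q}_\ldh(X,F),\]
where $\mathcal{H}^q_\ldh(F)$ denotes the $\cdh$-sheafification of $U\mapsto H^q_\ldh(U,F)$. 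To conclude it suffices to show that this sheaf vanishes for $q>0$, i.e.\ that every $\ldh$-cohomology class on $U$ is killed $\cdh$-locally.

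The main obstacle is the last vanishing, which is where the interaction between the two pretopologies is delicate. I would attack it by showing that every $\ldh$-hypercover admits a refinement by a $\cdh$-hypercover together with levelwise $\fpsl$-covers, and then arguing that the $\fpsl$-levels contribute trivially to the cohomology of $a_\cdh F$ by Step~1. A secondary technical point is the passage from Čech to hyper $\fpsl$-descent in Step~1: the bookkeeping of transfers along the decomposition of iterated fibre products $U^{\times_X(\bullet+1)}$ into their diagonal and off-diagonal strata, and checking compatibility with all simplicial face maps, is where the argument is most easily tripped up.
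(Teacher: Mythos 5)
The paper does not include a proof of this theorem; it simply cites Kelly's thesis, so there is no in-paper argument to compare against. Your ingredients (trace splitting of $\fpsl$-covers and $\cdh$-compatibility of transfers) are the same ones Kelly's proof uses, so the high-level strategy is right, but the argument as written has a genuine gap in the last step.

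Before that, two smaller cautions. First, the cycle implementing $\mathrm{tr}_p$ for a finite flat map $p\colon U\to X$ over a possibly non-regular $X$ is not the bare transposed graph $[\Gamma_p^t]$ but the flat relative cycle $\mathrm{cyc}(U/X)$; it does lie in $c_0(X\times_S U/X,\Z)$ (this is the content of the flat-cycle results in \cite[\S3.5]{SV:relativecycles}), and since an $\fpsl$-cover is \emph{globally} free of rank prime to $\ell$ the identity $\mathrm{tr}_p\circ p^\ast = (\deg p)\cdot\mathrm{id}$ holds, so this is fixable. Second, the virtual section $\tfrac{1}{n}\mathrm{tr}_p$ does assemble into an extra degeneracy on the Čech nerve in $\Cor\otimes\zll$ — one takes at level $n$ the flat cycle of the projection $U^{\times_X(n+2)}\to U^{\times_X(n+1)}$, scaled by $\tfrac{1}{n}$ — but passing from ``Čech complex is contractible'' to vanishing of derived-functor cohomology requires Cartan's criterion (or Verdier's hypercover theorem), which you invoke only implicitly as ``the standard criterion''; this should be made precise.

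The real problem is your Step~5. You reduce the theorem to the vanishing of $R^q\pi_\ast a_\ldh F$ for $q>0$, where $\pi$ is the change-of-topology morphism, and propose to attack this by refining an arbitrary $\ldh$-\emph{hypercover} by a $\cdh$-hypercover followed by levelwise $\fpsl$-covers. The factorization $\ldh = \cdh$ then $\fpsl$ is a statement about \emph{single} covering families in Definition~\ref{definition:fpsl}; promoting it to a compatible system of refinements over an entire simplicial object is a nontrivial coherence problem and is not a formal consequence, so this plan does not close the argument. A more robust route to the same vanishing: since $a_\cdh F = a_\ldh F$ by your Steps~1--3 and this presheaf has $\zll$-transfers (by $\cdh$-compatibility of transfers), one has for any finite surjective $\fpsl$-map $p\colon V\to U$ a trace on $\ldh$-sheaf cohomology defined directly via the exact pushforward $p_\ast$ for $p$ finite and the trace map $p_\ast p^\ast\to\mathrm{id}$, making $p^\ast\colon H^q_\ldh(U,-)\to H^q_\ldh(V,-)$ split injective; then a class in $H^q_\ldh(U,a_\ldh F)$ that dies on an $\ldh$-cover $\{V_j'\to V_j\to U\}$ already dies on the $\cdh$-cover $\{V_j\to U\}$, which is exactly the required $\cdh$-local vanishing. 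Even this needs care: one must check that the trace on $a_\ldh F$ is inherited from the $\cdh$-sheafification rather than from $\ldh$-compatibility of transfers, since the latter (Proposition~\ref{prop:ldhTransfers} in the paper) is itself proved \emph{using} the theorem you are proving, and appealing to it would be circular. As written, your proposal stops at identifying the obstacle; the argument to overcome it is missing and the particular fix you sketch is unlikely to work without substantial additional input.
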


\begin{lemma}
\label{lemma:ThePostnikovtowerargument}
Let $\scr C$ be a small category, $\sigma\leq \tau$ Grothendieck topologies on $\scr C$, 
and $F$ a presheaf of spectra on $\scr C$. 
Assume that:
\begin{enumerate}
\item $F$ is $\sigma$-local.
\item every $X\in\scr C$ has finite $\sigma$-cohomological dimension.
\item for every $X\in\scr C$ and every $p,q\in\Z$, 
the canonical map $H^p_\sigma(X,a_{\sigma}\pi_q F)\to H^p_\tau(X,a_\tau\pi_q F)$ is an isomorphism.
\end{enumerate}
Then $F$ is $\tau$-local.
\end{lemma}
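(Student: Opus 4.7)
The plan is to run the classical Postnikov tower / descent spectral sequence argument that the lemma's name already hints at. Let $L_\tau F$ denote a $\tau$-fibrant replacement of $F$; since $\sigma\leq \tau$ and $F$ is already $\sigma$-local, the natural map factors as $F\simeq L_\sigma F\to L_\tau F$, and I would aim to show this latter map is an equivalence of presheaves of spectra, \ie, induces an equivalence on sections over every $X\in\scr C$.

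The strategy is to compare, for a fixed $X\in\scr C$, the two descent spectral sequences arising from the Postnikov tower $\{\tau_{\leq n} F\}$. For each Grothendieck topology $\rho\in\{\sigma,\tau\}$, localizing the Postnikov tower and evaluating at $X$ yields a conditionally convergent spectral sequence
\[
E_2^{p,q}(\rho) = H^p_\rho(X, a_\rho\pi_q F) \Longrightarrow \pi_{q-p}\bigl((L_\rho F)(X)\bigr),
\]
whose differentials are $d_r\colon E_r^{p,q}\to E_r^{p+r,q+r-1}$. The canonical map $L_\sigma F\to L_\tau F$ induces a morphism between these two spectral sequences which, on the $E_2$-page, is precisely the map $H^p_\sigma(X,a_\sigma\pi_q F)\to H^p_\tau(X,a_\tau\pi_q F)$ from hypothesis (3), and is therefore an isomorphism.

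The main obstacle, and the reason hypothesis (2) appears, is convergence. By hypothesis (2), $H^p_\sigma(X,-)$ vanishes for $p$ larger than some integer $d(X)$, so the $\sigma$-spectral sequence has only finitely many nontrivial columns and converges strongly. By hypothesis (3) applied to $A=\pi_q F$, we obtain $H^p_\tau(X,a_\tau\pi_q F)=0$ for $p>d(X)$ as well, so the $\tau$-spectral sequence is likewise confined to finitely many columns and converges strongly. (This is the only use of finite $\sigma$-cohomological dimension: without it, one would only have conditional convergence and the comparison argument would fail.)

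Combining these steps: since both spectral sequences converge strongly, the isomorphism on $E_2$-pages yields an isomorphism of abutments
\[
\pi_\ast\bigl((L_\sigma F)(X)\bigr) \xrightarrow{\;\cong\;} \pi_\ast\bigl((L_\tau F)(X)\bigr).
\]
By hypothesis (1), the left-hand side is $\pi_\ast(F(X))$, so the map $F(X)\to (L_\tau F)(X)$ is an equivalence for every $X\in\scr C$, which is exactly the assertion that $F$ is $\tau$-local.
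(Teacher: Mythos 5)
Your proposal follows the spectral-sequence comparison sketched in the paper's \emph{remark} after the lemma, rather than the paper's main proof, which works directly with the $t$-structure on $\sigma$-local presheaves: truncating, showing each $a_\sigma\tau_{\leq n}F$ is $\tau$-local via the fiber sequences $(\pi_nF)[n]\to\tau_{\leq n}F\to\tau_{\leq n-1}F$, using left completeness (from (2)) to recover $F$ as $\holim_n a_\sigma\tau_{\leq n}F$, and treating the unbounded-below case separately via $a_\sigma\tau_{\geq k}F$. The two approaches are genuinely different: the paper's argument is internal to the categories of local objects and avoids any convergence analysis of spectral sequences; yours is more classical and computational.

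The main gap in your write-up concerns the abutment of the $\tau$-spectral sequence. You assert conditional convergence to $\pi_{q-p}\bigl((L_\tau F)(X)\bigr)$. What the Postnikov tower construction actually gives is conditional convergence to $\pi_*\bigl(\holim_n L_\tau\tau_{\leq n}F\bigr)(X)$; identifying this limit with $L_\tau F$ is a Postnikov-completeness statement about the $\tau$-local category that is not free (it would follow, for instance, from finite $\tau$-cohomological dimension, which is what you are trying to prove, not assume). The paper's remark avoids this by building the spectral sequence from cosimplicial Godement resolutions (thereby assuming the topologies have enough points, which you do not state) and invoking the second part of \cite[Proposition 5.47]{Thomason:1985} for the strong-convergence and completeness statements. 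Your argument can be repaired without that input: writing $G=\holim_n L_\tau\tau_{\leq n}F$, you should observe that $G$ is $\tau$-local (a homotopy limit of $\tau$-local presheaves), that the localization maps induce a morphism of towers $L_\sigma\tau_{\leq n}F\to L_\tau\tau_{\leq n}F$ and hence a morphism of spectral sequences abutting to $\pi_* F(X)\to\pi_* G(X)$ (the $\sigma$-side abutting to $\pi_* F(X)$ uses left completeness of $\Sp_\sigma(\scr C)$ from (2)), and then conclude $F\simeq G$, which is $\tau$-local. You should also make explicit that ``finitely many columns on $E_2$'' implies strong convergence to the conditional target in this unbounded setting is itself a theorem (Boardman/Thomason), not a triviality.

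A second, smaller point: you deduce vanishing of $H^p_\tau(X,a_\tau\pi_q F)$ for $p>d(X)$ directly from (3), which is correct, but it is worth noting that this vanishing concerns only the sheaves $a_\tau\pi_q F$ appearing on the $E_2$-page, not arbitrary $\tau$-sheaves; the argument therefore does not and need not establish finite $\tau$-cohomological dimension in general.
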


\begin{proof}
Denote by $\Sp(\scr C)$ the triangulated category of presheaves of spectra on $\scr C$. It is endowed 
with a $t$-structure for which the truncation functors $\tau_{\leq n}$ and $\tau_{\geq n}$ are defined objectwise. 
For a topology $\rho$ on $\scr C$, let $i_\rho\colon\Sp_\rho(\scr C)\into\Sp(\scr C)$ be the inclusion of the triangulated subcategory of $\rho$-local presheaves and $a_\rho\colon\Sp(\scr C)\to\Sp_\rho(\scr C)$ its left adjoint. 
The subcategory $\Sp_\rho(\scr C)$ inherits a $t$-structure whose truncation functors are $a_\rho\tau_{\leq n}i_\rho$ and $a_\rho\tau_{\geq n}i_\rho$. Moreover, the functors $a_\rho$ and $i_\rho$ are both left $t$-exact (see \cite[Remark 1.9]{DAGVII}).

By (1), we have $F\in \Sp_\sigma(\scr C)$. We first prove the lemma under the assumption that $F$ is bounded below for the $t$-structure on $\Sp_\sigma(\scr C)$, \ie, that $F\simeq a_\sigma\tau_{\geq k}F$ for some $k\in\Z$.
By (2), the $t$-structure on $\Sp_\sigma(\scr C)$ is left complete (see \cite[\S6.1]{Jardine}), 
so that $F\simeq\holim_{n\to\infty} a_\sigma\tau_{\leq n}F$. 
Since the inclusion $\Sp_\tau(\scr C)\hookrightarrow\Sp(\scr C)$ preserves homotopy limits, 
it suffices to show that $a_\sigma\tau_{\leq n}F$ is $\tau$-local for all $n\in\Z$.
Using the fiber sequences
\[
(\pi_nF)[n]
\to 
\tau_{\leq n} F
\to 
\tau_{\leq n-1}F
\]
and the fact that $a_\sigma\tau_{\leq n}F=0$ for $n<k$, 
we are reduced to proving that $a_\sigma((\pi_nF)[n])$ is $\tau$-local for all $n\geq k$, 
or equivalently that the canonical map $a_\sigma((\pi_nF)[n])\to a_\tau((\pi_nF)[n])$ induces an isomorphism on presheaves of homotopy groups. 
By definition of cohomology, we have
\[
\pi_\ast a_\sigma((\pi_nF)[n])
=
H^{n-\ast}_\sigma(\ph,a_\sigma\pi_n F),
\]
so the desired result holds by (3).

For a general $F$, the previous proof applies to the presheaves of spectra $a_\sigma\tau_{\geq k}F$ for all $k\in\Z$ and shows that
\[a_\sigma\tau_{\geq k}F\simeq a_\tau\tau_{\geq k}F.\]
To complete the proof we observe that, for \emph{any} topology $\rho$ on $\scr C$, the canonical map
\[\colim_{k\to-\infty} i_\rho a_\rho\tau_{\geq k}F\to i_\rho a_\rho F\]
is an equivalence of presheaves of spectra. Its cofiber is $\colim_{k\to-\infty} i_\rho a_\rho\tau_{< k}F$. Since both $i_\rho$ and $a_\rho$ are left $t$-exact, $i_\rho a_\rho\tau_{< k}F$ is a presheaf of $(k-1)$-truncated spectra, and hence $\colim_{k\to-\infty} i_\rho a_\rho\tau_{< k}F$ is a presheaf of spectra which are $(k-1)$-truncated for all $k\in\Z$, \ie, it is contractible.
\end{proof}

\begin{remark}
	We sketch a different proof of Lemma~\ref{lemma:ThePostnikovtowerargument} assuming that the topologies $\sigma$ and $\tau$ have enough points, which holds for all the topologies we consider. The cosimplicial Godement resolutions give rise for every $X\in\scr C$ to spectral sequences $\{_\sigma E_r^{\aast}\}_{r\geq 1}$ and $\{_\tau E_r^{\aast}\}_{r\geq 1}$ with
	\[_\sigma E_2^{p,q}\cong H^p_\sigma(X,a_\sigma\pi_q F)\quad\text{and}\quad _\tau E_2^{p,q}\cong H^p_\tau(X,a_\tau\pi_q F),\]
	and to a morphism of spectral sequences from the former to the latter, which by (3) is an isomorphism starting from the second page. By (2), $_\sigma E_2^{p,\ast}=0$ for $p\gg 0$, so both spectral sequences stabilize after finitely many steps. It follows from the second part of \cite[Proposition 5.47]{Thomason:1985} that $_\rho E_\infty^{\aast}$ is the associated graded of a complete, Hausdorff, and (trivially) exhaustive filtration on $\pi_\ast (a_\rho F)(X)$, for $\rho=\sigma,\tau$. This shows that the canonical map $\pi_\ast (a_\sigma F)(X)\to \pi_\ast (a_\tau F)(X)$ is an isomorphism for all $X\in\scr C$, and hence that $a_\sigma F\simeq a_\tau F$.
\end{remark}

\begin{proposition}\label{prop:ldhTransfers}
	Let $R$ be a $\zll$-algebra.
	\begin{enumerate}
		\item The $\ldh$-topology on $\Sch_k$ is compatible with $R$-transfers.
		\item The $\ldh$-topology on $\Sm_k$ is compatible with $R$-transfers.
		\item The functor $i^\ast\colon\s\Pre^\tr(\Sch_k,R)\to\s\Pre^\tr(\Sm_k,R)$ preserves $R^\tr W_\ldh$-local equivalences.
	\end{enumerate}
\end{proposition}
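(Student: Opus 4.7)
We prove (1) first; parts (2) and (3) will follow from (1) via Lemma~\ref{lem:cocontinuousTransfers}. The strategy for (1) is to reduce the compatibility of the $\ldh$-topology with $R$-transfers on $\Sch_k$ to the already-known compatibility of the $\cdh$-topology (\cite[Proposition 10.4.8]{CD}), through the following key claim: for every $F\in\s\Pre^\tr(\Sch_k,R)$, the underlying simplicial presheaf $u^\tr F$ is $W_\cdh$-local if and only if it is $W_\ldh$-local.

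The direction $(W_\ldh\text{-local}\Rightarrow W_\cdh\text{-local})$ is immediate from $W_\cdh\subset W_\ldh$. For the converse, the values of $u^\tr F$ are simplicial $R$-modules, so via Dold--Kan and the Eilenberg--Mac Lane functor $u^\tr F$ may be packaged as a presheaf of connective spectra whose homotopy presheaves $\pi_q(u^\tr F)$ inherit transfers from $F$ and are $\zll$-modules (since $R$ is a $\zll$-algebra). We would then invoke Lemma~\ref{lemma:ThePostnikovtowerargument} with $\sigma=\cdh$ and $\tau=\ldh$: hypothesis~(1) is the standing assumption that $u^\tr F$ is $\cdh$-local; hypothesis~(2) is the finiteness of the $\cdh$-cohomological dimension on Noetherian schemes of finite Krull dimension over $k$; hypothesis~(3) is precisely Theorem~\ref{thm:cdh=ldh} applied to the $\zll$-linear presheaves with transfers $\pi_q(u^\tr F)$.

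Granting the key claim, we deduce (1) as follows. By the $(R^\tr,u^\tr)$ adjunction, $F$ is $R^\tr W_\tau$-local in $\s\Pre^\tr$ iff $u^\tr F$ is $W_\tau$-local, for any $\tau$; combined with the key claim, this identifies the $R^\tr W_\cdh$- and $R^\tr W_\ldh$-local objects in $\s\Pre^\tr(\Sch_k,R)$, so the corresponding Bousfield localizations and classes of local equivalences coincide. Moreover, for any $F\in\s\Pre^\tr$, the $W_\cdh$- and $W_\ldh$-fibrant replacements of $u^\tr F$ agree (both are computed by $u^\tr$ applied to the $R^\tr W_\cdh$-fibrant replacement of $F$, using $\cdh$-compatibility together with the key claim), whence $u^\tr(f)$ between objects of the form $u^\tr F_i$ is a $W_\cdh$-equivalence iff a $W_\ldh$-equivalence. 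Chaining these observations with condition~(2) of Lemma~\ref{lem:compatiblewithtransfers} for the $\cdh$-topology yields the same condition for the $\ldh$-topology, proving~(1).

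For (2) and (3) we apply Lemma~\ref{lem:cocontinuousTransfers} to $i\colon\Sm_k\into\Sch_k$ with $\tau=\ldh$. The only remaining hypothesis is that $i$ is cocontinuous for the $\ldh$-topology: given a smooth $k$-scheme $X$ and an $\ldh$-cover $\{U_j\to X\}$ in $\Sch_k$, Corollary~\ref{corollary:l'resolution} produces for each $j$ an $\ldh$-cover $\{V_{jk}\to U_j\}$ by smooth quasi-projective $k$-schemes, and the composite family $\{V_{jk}\to X\}$ is an $\ldh$-cover of $X$ in $\Sm_k$ refining the original. The main obstacle is the key claim: it requires handling $u^\tr F$ simultaneously as a simplicial presheaf and as a presheaf of spectra whose homotopy groups are presheaves with transfers, with Theorem~\ref{thm:cdh=ldh} providing the crucial cohomological input to the Postnikov tower argument of Lemma~\ref{lemma:ThePostnikovtowerargument}; everything else is formal rearrangement.
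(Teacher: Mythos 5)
Your proof is correct and follows essentially the same route as the paper: the core step in both is the implication that a $\cdh$-local object of the form $u^\tr F$ is automatically $\ldh$-local, proved by applying Lemma~\ref{lemma:ThePostnikovtowerargument} (with $\sigma=\cdh$, $\tau=\ldh$) together with Theorem~\ref{thm:cdh=ldh} on the homotopy presheaves $\pi_q u^\tr F\cong u^\tr\pi_q F$, which carry transfers, and the finite $\cdh$-cohomological dimension of Noetherian schemes; parts (2) and (3) then follow in both cases from Lemma~\ref{lem:cocontinuousTransfers} once $\ldh$-cocontinuity of $\Sm_k\into\Sch_k$ is extracted from Corollary~\ref{corollary:l'resolution}. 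The only cosmetic difference is that the paper deduces (1) by a shorter chain — showing directly that $u^\tr R^\tr(f)$ is already a $\cdh$-local equivalence — whereas you identify the two Bousfield localizations and match fibrant replacements, which is a slightly more roundabout but equally valid packaging of the same adjunction arguments.
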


\begin{proof}
	To prove (1) we must show that, for any $\ldh$-hypercover $f\in W_{\ldh}$, $u^\tr R^\tr(f)$ is an $\ldh$-local equivalence; we will show that it is in fact a $\cdh$-local equivalence. Since $\cdh$ is compatible with transfers on $\Sch_k$ \cite[Lemma 1.24]{VV:EMspaces}, we know from Lemma~\ref{lem:compatiblewithtransfers} that $u^\tr R^\tr(f)$ is a $\cdh$-local equivalence if (and only if) $R^\tr(f)$ is an $R^\tr W_\cdh$-local equivalence, and by assumption it is an $R^\tr W_\ldh$-equivalence. By adjunction, it will suffice to show that if $F\in\s\Pre^\tr(\Sch_k,R)$ is $R^\tr W_\cdh$-local, then $F$ is in fact $R^\tr W_\ldh$-local or, equivalently, $u^\tr (F)$ is $\ldh$-local.

We have a canonical identification of presheaves $\pi_q u^\tr (F )
\cong u^\tr \pi_q (F )$ and since the latter have a
structure of presheaves with transfers, it follows from Theorem~\ref{thm:cdh=ldh} that
$$
H^p_{\cdh}(X, a_{\cdh} \pi_q u^\tr (F)) \cong H_{\ldh}^p (X,
a_{\ldh} \pi_q u^\tr (F )).
$$
Since the $\cdh$-topology is cohomologically bounded \cite{SV:2000}, Lemma \ref{lemma:ThePostnikovtowerargument}
shows that $u^\tr (F)$ is $\ldh$-local (to apply the lemma, we view simplicial abelian groups as connective spectra). This proves (1).
	
	By Corollary~\ref{corollary:l'resolution}, $i\colon\Sm_k\into \Sch_k$ is cocontinuous for the $\ldh$-topology. Assertions (2) and (3) now follow from (1) by virtue of Lemma~\ref{lem:cocontinuousTransfers}.
\end{proof}

\subsection{Descent for $\MM\ZZ_{(\ell)}$-modules}
\label{subsection:tfsI}

In this section we review the proof from \cite[\S5]{KellyThesis} that $\MM\ZZ_{(\ell)}$-modules are $\ldh$-local (see Definition~\ref{def:localSH}), which is a key ingredient in the proof of Theorem~\ref{theorem:CDcomm}.
The argument can be summarized as follows. We know from Ayoub's proper base change theorem that motivic spectra are $\cdh$-local. The notion of ``structure of traces'' (Definition \ref{def:traces}) is designed to bridge the gap between $\cdh$-descent and $\ldh$-descent. A first step is therefore to show that the motivic cohomology spectrum $\MM\Z_{(\ell)}\in\SH(\Sm_k)$ has a structure of traces. Roughly speaking, this follows from the facts that homotopy algebraic $K$-theory $\KGL$ has a structure of traces and that $\MM\Z_{(\ell)}$ is the zeroth slice of $\KGL_{(\ell)}$.

We first recall the notion of a structure of traces introduced in \cite{KellyThesis}.

\begin{definition}\label{def:traces}
Let $\scr E\colon\Sch_k\to\Cat$ be a $2$-functor (or equivalently, a cofibered category over $\Sch_k$) with values in additive categories 
and $F$ an additive section of $\scr E$ (\ie, a section of the associated cofibered category such that $F_{X\amalg Y}\cong(i_1)_\ast F_X\oplus (i_2)_\ast F_Y$).
A \emph{structure of traces} on $F$ is a family of maps $\Tr_f\colon f_\ast F_Y\to F_X$, defined for every finite flat surjective map $f\colon Y\to X$ in $\Sch_k$, 
subject to the following coherence conditions (where $f$ and $g$ are finite flat surjective maps).
	\begin{itemize}
	\item (Additivity) $\Tr_{f\amalg g}=(i_1)_\ast\Tr_f\oplus(i_2)_\ast\Tr_g$.
	\item
	(Functoriality) $\Tr_{\id_X}=\id_{F_X}$, and if $g\colon Z\to Y$ and $f\colon Y\to Z$,
	$\Tr_{fg}=\Tr_{f}\circ f_\ast\Tr_{g}$. 
	\item
	(Base change)
	For every cartesian square
	\begin{tikzmath}
	\diagram{
	W & Z  \\
	Y & X \\};
	\arrows (11) edge node[left]{$q$} (21) (11-) edge node[above]{$g$} (-12)
	(21-) edge node[above]{$f$} (-22) (12) edge node[right]{$p$} (22);
	\end{tikzmath}
	in $\Sch_k$,
	we have $F_p\circ\Tr_{f}=p_\ast\Tr_{g}\circ f_\ast F_q$.
	\item
	(Degree)
	If $f$ is globally free of degree $d$ (\ie, $f_\ast\scr O_Y\cong \scr O_X^d$),
	then $\Tr_{f}\circ F_f=d\cdot\id_{F_X}$.
	\end{itemize}
\end{definition}

When $\scr E$ is the constant functor with value $\Ab$, 
this definition specializes to the notion of \emph{presheaf with traces} on $\Sch_k$. 
The other example that we will use is the $2$-functor $X\mapsto\SH(\Sm_X)$ on $\Sch_k$, which takes values in triangulated categories. 
This is a stable homotopy $2$-functor in the sense of \cite{Ayo07}. In this context, we will simply write $f_\ast$, $f^\ast$, and $f_\sharp$ for 
the \emph{derived} functors $\R f_\ast$, $\L f^\ast$, $\L f_\sharp$ associated to a change of base scheme $f$ (see \S\ref{sub:basechange}).

Suppose that the cofibered category $\scr E$ is bifibered, \ie, that for every $f\colon Y\to X$ in $\Sch_k$ the functor $f_\ast\colon \scr E(Y)\to \scr E(X)$ 
has a left adjoint $f^\ast$. Then, to any object $\EE\in\scr E(\Spec k)$, we can associate a section $X\mapsto\EE_X$ of $\scr E$ as follows: 
if $a\colon X\to\Spec k$ is the structure map of $X\in\Sch_k$, then $\EE_X=a^\ast\EE$, and given $f\colon Y\to X$, the map $\EE_X\to f_\ast \EE_Y$ is 
the unit of the adjunction $(f^\ast,f_\ast)$. Under some mild assumptions on $\scr E$ (for example, axioms 1 to 3 in \cite[\S1.4.1]{Ayo07}), 
this is an additive section of $\scr E$. 
A \emph{structure of traces} on $\EE\in\scr E(\Spec k)$ is a structure of traces on the corresponding section $X\mapsto\EE_X$.

\begin{remark}
	When $\EE=\MM A\in\SH(\Sm_k)$ there is a conflict of notations in that, if $X$ is not smooth over $k$, the object $\EE_X$ is not known to coincide 
	with the motivic Eilenberg--Mac Lane spectrum $\MM A_X$ defined in~\S\ref{sub:EML}. When we speak of a structure of traces on $\MM A$, 
	we mean a structure of traces on the cartesian section $(a\colon X\to\Spec k)\mapsto a^\ast\MM A_k$, \emph{not} on the section $X\mapsto \MM A_X$.
\end{remark}

\begin{remark}\label{remark:traces}
	If $\phi\colon\scr E_1\to \scr E_2$ is a $2$-natural transformation and $F$ is an additive section of $\scr E_1$, 
	it is clear that a structure of traces on $F$ induces a structure of traces on $\phi F$. 
	For example, if $F$ is a section of $X\mapsto\SH(\Sm_X)$ with a structure of traces, 
	then for any $\EE\in\SH(\Spec k)$, $X\mapsto[\EE_X,F_X]$ is a presheaf with traces on $\Sch_k$.
\end{remark}

We recall two important results on traces from \cite{KellyThesis}. 
The first one relates traces to transfers:

\begin{theorem}[{\cite[Theorem 3.8.1 (3)]{KellyThesis}}]
\label{theorem:cdhlprimeComparison}
Suppose $F$ is a presheaf of $\zll$-modules with traces on $\Sch_k$ such that:
\begin{enumerate}
 \item $F(X) \to F(X_{\red})$ is an isomorphism for every $X\in\Sch_k$.
 \item $a_{\Nis}F|_{\Sm_k}$ is an unramified sheaf in the sense of \cite[Definition 2.1]{Morel:2012}.
\end{enumerate}
Then $a_{\cdh}F$ has a canonical structure of presheaf with transfers.
\end{theorem}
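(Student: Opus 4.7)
The plan is to construct, for each finite correspondence $\alpha \in c_0(X \times_k Y/X, \Z)$, a transfer map $\alpha^\ast : (a_\cdh F)(Y) \to (a_\cdh F)(X)$, and to check the axioms of a presheaf with transfers. By additivity and hypothesis~(1), I may assume $\alpha = [Z]$ is elementary, with $X$, $Y$, $Z$ reduced, $Z \subset X \times Y$ closed integral, finite over $X$, and dominating some irreducible component $X_0$ of $X$. The map $Z \to X$ is then flat over a dense open subscheme of $X_0$, so Theorem~\ref{theorem:platification} yields a blow-up $\pi : X' \to X$ with nowhere dense center $W \subset X$ such that the proper transform $Z' \subset X' \times Y$ is finite flat surjective onto the appropriate component of $X'$.

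Having platified, I define a preliminary transfer as the composite
\[
\widetilde{\alpha}^\ast : F(Y) \xrightarrow{F(q)} F(Z') \xrightarrow{\Tr_{Z'/X'}} F(X'),
\]
where $q : Z' \to Z \hookrightarrow X \times Y \to Y$ is the natural projection. To descend this to $(a_\cdh F)(X)$, I proceed by Noetherian induction on $\dim X$: assume that the transfer has been defined on $(a_\cdh F)(W)$ using the restricted correspondence $[Z_W] \in \Cor(\Sch_k)(W, Y)$. The cdh-distinguished square formed by $W \times_X X'$, $X'$, $W$, and $X$ makes $a_\cdh F$ into a pullback functor on its four values, so gluing my candidate $\widetilde{\alpha}^\ast$ on $X'$ with the inductively defined $\mathrm{tr}_{[Z_W]}$ on $W$ requires only that the two maps agree after restriction to $W \times_X X'$. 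This is precisely the base change axiom for $\Tr$ applied to the cartesian square relating $Z' \to X'$ with its restriction above $W \times_X X'$, combined with the inductive definition over $W$. The resulting glued morphism is the transfer $\alpha^\ast$.

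It remains to verify well-definedness and coherence. Independence of the choice of platification follows by dominating any two platifications by a common one and applying functoriality together with base change of $\Tr$. Additivity in $\alpha$ follows from the additivity axiom of traces. The main obstacle is the composition law $(\beta \circ \alpha)^\ast = \alpha^\ast \circ \beta^\ast$: it requires simultaneously platifying $\alpha$, $\beta$, and their cycle-theoretic composite, and then comparing the two sides via a double application of base change across two cartesian diagrams involving blow-ups and supports of cycles. Here hypothesis~(2) becomes essential: the unramified property of $a_\Nis F|_{\Sm_k}$ reduces the equality of transfers to an identity on the generic fibers of smooth codimension-one local rings, where $\Tr$ specializes to the classical field-theoretic trace and the composition law becomes standard.
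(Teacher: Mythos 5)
The paper merely cites Kelly's thesis for this result without reproducing the proof, so your write‑up can only be judged on its own merits. Your skeleton (platify the correspondence, use the trace on the flat transform, then descend along abstract blow‑up squares by Noetherian induction) is the right general shape. But there is a real gap, and it sits at the exact place you dismiss in one sentence.

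You claim that independence of the platification ``follows by dominating any two platifications by a common one and applying functoriality together with base change of $\Tr$.'' This does not work, because the \emph{proper transform} of a cycle does not commute with base change. If $X'_1\to X$ and $X'_2\to X$ are two flattening blow-ups with transforms $Z'_1,Z'_2$, the transform $Z'_3$ over $X'_3 = X'_1\times_X X'_2$ is in general a \emph{strict} subcycle of the naive pullback $Z'_1\times_{X'_1}X'_3$: the pullback can acquire embedded and extra components supported over the exceptional locus, and the trace of the pullback is not a priori the trace of the transform. The base change axiom of Definition~\ref{def:traces} only relates traces across honest cartesian squares of finite flat surjective maps, so it cannot by itself identify $\Tr_{Z'_1/X'_1}$ and $\Tr_{Z'_2/X'_2}$ after pulling back to $X'_3$. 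The same defect infects your gluing step: the restriction of $Z'$ to $W\times_X X'$ is a cartesian pullback, while the inductive transfer over $W$ is built from a \emph{proper transform} of $Z_W$, and these need not coincide as cycles.

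This is where hypothesis (2) is actually needed, not merely for the composition law as you suggest. An unramified sheaf in Morel's sense is, on a smooth scheme, detected by its sections over the local rings at codimension-one points, and in particular injects into the product of its stalks at generic points. Restricting the construction to generic points of smooth schemes makes the blow-ups isomorphisms there, so proper transform and pullback agree, and the degree axiom (here is where the $\Z_{(\ell)}$-coefficient hypothesis earns its keep) pins down the trace uniquely. Well-definedness, the agreement on overlaps in your cdh squares, and then the composition law all go through this reduction; you cannot defer condition (2) to the last step. Finally, you should say something about how you get from the unramified structure of $a_\Nis F|_{\Sm_k}$ to transfers on $a_\cdh F$ over all of $\Sch_k$: your Noetherian induction is the mechanism, but the base of the induction (where $X$ is generic, i.e.\ a point) and the passage from Nisnevich-local data on $\Sm_k$ to the cdh sheaf on $\Sch_k$ deserve to be spelled out, since in positive characteristic one cannot resolve $X$ by smooth schemes using the cdh topology alone.
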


The second result on traces from \cite{KellyThesis} concerns the compatibility of traces with the slice filtration. To state it we need a few preliminary definitions.

\begin{definition}
A \emph{weak structure of smooth traces} on $\EE\in\SH(\Sm_k)$ is a family of maps $\Tr_f\colon f_\ast \EE_Y\to \EE_X$, 
defined for every finite, flat, globally free, and surjective map $f\colon Y\to X$ in $\Sm_k$, such that, if $f$ is of degree $d$, 
$\Tr_f\circ \EE_f=d\cdot\id_{\EE_X}$.
\end{definition}

Clearly, a structure of traces on $\EE$ induces a weak structure of smooth traces on $\EE$.

\begin{definition}
Let $R\subset\QQ$ be a subring of the rational numbers.
We say that a motivic spectrum $\EE\in\SH(\Sm_k)$ is \emph{$R$-local} if the abelian group $[\EE,\EE]$ is an $R$-module.
\end{definition}

\begin{theorem}[{\cite[Proposition 4.3.7]{KellyThesis}}]
\label{thm:slicetraces}
Suppose $\EE\in \SH(\Sm_k)$ is $\ZZ[1/c]$-local with a structure of traces, and suppose that $s_q\EE$ has a weak structure of smooth traces for every $q\in\Z$.
Then $s_{q}\EE$ has a structure of traces for every $q\in\ZZ$.
\end{theorem}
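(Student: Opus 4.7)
The strategy is to define, for every finite flat surjective $f\colon Y\to X$ in $\Sch_k$, a trace $\Tr_f^{s_q}\colon f_\ast s_q\EE_Y\to s_q\EE_X$ by transporting the given trace $\Tr_f^{\EE}$ through the slice functor $s_q$, and then to verify the four axioms of Definition~\ref{def:traces}. The weak smooth trace hypothesis is used as a consistency condition on the smooth, globally free subfamily.

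First I would establish the key compatibility: for any finite flat surjective $f\colon Y\to X$ in $\Sch_k$, the pushforward $f_\ast\colon\SH(\Sm_Y)\to\SH(\Sm_X)$ commutes with $s_q$, in the sense that the canonical natural transformation $f_\ast\circ s_q\to s_q\circ f_\ast$ is an equivalence. The ingredients are: properness of $f$, which gives an identification $f_\ast\simeq f_!$ and commutation with arbitrary colimits; preservation of effectivity by finite flat pushforward, checked on the generators $\Sigma^{\infty}U_+\wedge\Sigma^{2q,q}\1$ for $U\in\Sm_Y$; and the $\Z[1/c]$-locality of $\EE$, which removes the $c$-power inseparability phenomena that otherwise obstruct such preservation results in positive characteristic. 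Granting this, I would define
\[\Tr_f^{s_q}\colon f_\ast s_q\EE_Y\xrightarrow{\sim} s_q f_\ast\EE_Y\xrightarrow{\ s_q(\Tr_f^{\EE})\ } s_q\EE_X.\]

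Next I would verify the four axioms. Additivity, functoriality, and base change are inherited from the corresponding identities for $\Tr_f^{\EE}$ by the pseudonaturality of $s_q$ together with the compatibility just established. The degree axiom splits into two cases. For smooth $f$ with $X\in\Sm_k$, the weak structure of smooth traces on $s_q\EE$ already provides a morphism $\Tr_f^{\mathrm{weak}}$ satisfying the degree identity; I would show that $\Tr_f^{s_q}$ coincides with $\Tr_f^{\mathrm{weak}}$ by arguing that their difference is a $\Z[1/c]$-local class which must vanish given the degree relation on $\EE$. For general finite flat surjective $f$ in $\Sch_k$, the degree identity follows by applying $s_q$ to the degree relation for $\Tr_f^{\EE}$ directly.

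The main obstacle is the slice/pushforward compatibility $f_\ast\circ s_q\simeq s_q\circ f_\ast$ for finite flat surjective $f$ with possibly non-smooth target. Voevodsky's slice filtration is defined by a localization on $\SH(\Sm_X)$, and for general $X\in\Sch_k$ its interaction with pushforward is not formal; one likely needs an explicit description of the effective subcategory (for instance via Pelaez-type generators or homotopy modules), together with a careful use of the $\Z[1/c]$-locality hypothesis to compare truncations. A subsidiary obstacle is the uniqueness argument matching the two candidate traces on the smooth part of $\Sch_k$, which simultaneously invokes the weak smooth trace hypothesis and the degree axiom on $\EE$.
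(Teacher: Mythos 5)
This theorem is cited from Kelly's thesis and the paper itself gives no proof of it, so there is no in-text argument to compare against; I will assess your proposal on its own terms. The central move — defining $\Tr_f^{s_q}$ by conjugating $\Tr_f^{\EE}$ through the slice functor — relies on a commutation $f_\ast\circ s_q\simeq s_q\circ f_\ast$ that is both unproven and, more fundamentally, not the right thing to ask for. Recall from the remark after Definition~\ref{def:traces} that a structure of traces on $\EE\in\SH(\Sm_k)$ is a structure of traces on the \emph{cartesian} section $(a\colon X\to\Spec k)\mapsto a^\ast\EE$; the slice $s_q\EE$ is formed once, over $\Spec k$, and then pulled back, so the section to endow with traces is $X\mapsto a^\ast(s_q\EE)$. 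Your proposed trace map instead targets $s_q^X(\EE_X)$, the $q$-th slice computed internally in $\SH(\Sm_X)$. When $X\in\Sm_k$ this agrees with $a^\ast(s_q\EE)$, because $a^\ast$ has a Quillen left adjoint $a_\sharp$ preserving effectivity, hence commutes with the slice colocalizations $f_q$. But for $X\in\Sch_k$ not smooth over $k$, $a^\ast$ need not commute with $s_q$, and $s_q^X(\EE_X)$ is not the object that the definition asks for. So even granting your commutation $f_\ast s_q^Y\simeq s_q^X f_\ast$, the output would be trace maps on the wrong section.

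Independently, the commutation itself is a very strong unsupported claim. You invoke preservation of effectivity by finite flat pushforward, checked on generators $\Sigma^\infty U_+$ with $U\in\Sm_Y$, but for $f\colon Y\to X$ finite with $X$ singular, such $U$ is generally not smooth over $X$, and there is no straightforward identification of $f_\ast\Sigma^\infty U_+$ with a generator of the effective subcategory of $\SH(\Sm_X)$. Moreover, preserving the effective subcategory would not by itself give commutation with the colocalization $f_q$; one would also need $f_\ast$ to behave on the right-orthogonal complement, which is a separate and much harder statement. The appeal to $\Z[1/c]$-locality as removing ``$c$-power inseparability phenomena'' is too vague to carry this; in Kelly's theorem that hypothesis plays a precise role in allowing one to divide by degrees. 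Finally, your use of the weak-smooth-trace hypothesis only as a consistency check underestimates its role: over $\Sm_k$ it is the only way to \emph{produce} trace maps for $s_q\EE$ (there is no a priori way to slice the traces of $\EE$), and over non-smooth schemes in $\Sch_k$ a genuinely different construction is needed — not a naive transport through $s_q$. As written, the proposal has a gap at its core that is not patched by the ingredients you list.
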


\begin{proposition}[{\cite[Proposition 5.2.3]{KellyThesis}}]
	\label{prop:KGLtraces}
	The homotopy algebraic $K$-theory spectrum $\KGL\in\SH(\Sm_k)$ has a structure of traces.
\end{proposition}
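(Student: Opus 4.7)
The plan is to construct the trace maps $\Tr_f$ from the classical transfer along finite flat surjective morphisms in algebraic $K$-theory, using the representability of homotopy $K$-theory by $\KGL$, and to verify the four axioms by reduction to standard properties of the pushforward of vector bundles checked on representable classes.

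First I would record the representability: for any $X\in\Sch_k$, the spectrum $\KGL_X=a_X^\ast\KGL\in\SH(\Sm_X)$ represents homotopy $K$-theory on smooth $X$-schemes, so that $[\Sigma^{p,q}\Sigma^\infty U_+,\KGL_X]\cong KH_{2q-p}(U)$ for $U\in\Sm_X$ and $p,q\in\Z$ (via Riou's theorem extended to arbitrary bases by Cisinski--D\'eglise). Next, given a finite flat surjective $f\colon Y\to X$ in $\Sch_k$, I would define $\Tr_f$ as follows. For each $U\in\Sm_X$, the base-changed map $f_U\colon U\times_X Y\to U$ is still finite flat surjective, so pushforward of coherent $\OO$-modules along $f_U$ preserves vector bundles (since $(f_U)_\ast\OO_{U\times_X Y}$ is locally free of finite rank). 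This yields an $\A^1$-invariant transfer $(f_U)_\ast\colon K_\ast(U\times_X Y)\to K_\ast(U)$, which descends to $KH_\ast$. By adjunction,
\[
[\Sigma^{p,q}\Sigma^\infty U_+,f_\ast\KGL_Y]\cong[\Sigma^{p,q}\Sigma^\infty(U\times_X Y)_+,\KGL_Y]\cong KH_{2q-p}(U\times_X Y),
\]
so these transfers assemble into a natural transformation of represented functors on $\Sm_X$. Since $\SH(\Sm_X)$ is compactly generated by the $\Sigma^{p,q}\Sigma^\infty U_+$, this natural transformation is represented by a unique morphism $\Tr_f\colon f_\ast\KGL_Y\to\KGL_X$ in $\SH(\Sm_X)$.

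Then I would verify the four axioms of Definition~\ref{def:traces} on representable classes, where the claims reduce to classical facts about the pushforward of vector bundles. Additivity follows from the decomposition of $f_\ast$ along a coproduct $Y=Y_1\amalg Y_2$. Functoriality is the equality $(fg)_\ast=f_\ast\,g_\ast$ for composition of pushforwards of coherent sheaves. Base change follows from flat base change for quasi-coherent pushforward: since $f$ is flat the cartesian square is Tor-independent, and $p^\ast f_\ast V\cong g_\ast q^\ast V$ for vector bundles $V$ translates, via the adjunction between $p^\ast$ and $p_\ast$, into the required equality $F_p\circ\Tr_f=p_\ast\Tr_g\circ f_\ast F_q$. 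For the degree axiom, if $f$ is globally free of degree $d$, the projection formula gives $(f_U)_\ast f_U^\ast V\cong V\otimes_{\OO_U}(f_U)_\ast\OO_{U\times_X Y}\cong V^{\oplus d}$, so $\Tr_f\circ\KGL_f$ acts as multiplication by $d$ on representable classes and hence as $d\cdot\id$ on $\KGL_X$.

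The hard part is the passage from naturality on representable functors to an actual morphism in $\SH(\Sm_X)$, and the coherent assembly of these morphisms across different base schemes so that the family $\{\Tr_f\}$ really defines a structure of traces on the cartesian section $X\mapsto\KGL_X$ of the $2$-functor $X\mapsto\SH(\Sm_X)$. In particular, one needs to ensure that $f_\ast\KGL_Y$ is recovered from its represented functor on $\Sigma^\infty U_+$ with $U\in\Sm_X$ (which uses compact generation together with the fact that $f_\ast$ preserves the requisite Nisnevich- and $\A^1$-local structure on the image of $\KGL_Y$), and that pullback along an arbitrary morphism of base schemes is compatible with the transfers just constructed. This last compatibility is essentially the content of the base-change axiom, which is therefore the technical heart of the argument and the step that is most delicate to verify rigorously.
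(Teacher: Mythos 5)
Your proposal correctly identifies the underlying geometric input — pushforward of coherent $\OO$-modules along a finite flat map preserves (perfect complexes of) vector bundles because $f_\ast\OO_Y$ is locally free, and this gives a transfer on $K$-theory that descends to $KH$. This is indeed the mechanism the paper uses. However, there is a genuine gap in the way you pass from this to a morphism in $\SH(\Sm_X)$, and you have in fact pointed at it yourself without closing it.

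The claim that compact generation of $\SH(\Sm_X)$ by the objects $\Sigma^{p,q}\Sigma^\infty U_+$ allows you to ``represent'' a natural transformation of homotopy-group-valued functors on these generators by a unique morphism $f_\ast\KGL_Y\to\KGL_X$ is false in general: this is exactly where phantom maps live. Compact generation guarantees Brown representability for \emph{objects}, not a fully faithful Yoneda embedding; for any $\EE,\mathsf{F}$ one has a $\lim^1$ exact sequence and two distinct morphisms $\EE\to\mathsf{F}$ can induce the same natural transformation on $[\Sigma^{p,q}\Sigma^\infty U_+,\ph]$. The same defect reappears in your verification of the degree axiom, where you only check $\Tr_f\circ\KGL_f=d\cdot\id$ on represented classes. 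So the ``hard part'' you flag is not merely delicate — it is the missing idea, and the represented-functor framework alone does not supply it.

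The paper sidesteps this entirely by working one categorical level down. It first produces honest maps of presheaves of spectra: Thomason--Trobaugh pushforward along perfect complexes gives a point-set map $\Tr_f\colon K(Y)\to K(X)$, and the four axioms are verified there using \cite[1.7.2, 1.5.4, 3.18, 1.7.3]{TT90}. It then restricts $K$ to $\Sm_X$ to obtain a presheaf of spectra $K_X$, pushes the trace structure through the colimit constructions $K^B_X=K_X^\sharp$ and $KH_X=L_{\A^1}K^B_X$ (using that derived pushforward $f_\ast$ commutes with homotopy colimits), and finally assembles the $\P^1$-spectrum $(KH_X,KH_X,\dotsc)$ to get $\KGL_X$, checking compatibility with the Bott periodicity bonding maps. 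Because every step is a map of (presheaves of) spectra rather than a morphism in a homotopy category reconstructed from represented functors, no phantom-map ambiguity ever arises. If you want to salvage your approach, you would either need to show that the relevant hom-groups in $\SH(\Sm_X)$ have no phantoms (not true in general), or, as the paper does, construct the transfers strictly and only descend to $\SH(\Sm_X)$ at the end.
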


\begin{proof}
	Let $f\colon Y\to X$ be a finite flat surjective map in $\Sch_k$. It induces an exact functor $f_\ast$ between the biWaldhausen categories of perfect complexes, whence a morphism of $K$-theory spectra $\Tr_f\colon K(Y)\to K(X)$ in the sense of Thomason--Trobaugh. It follows from standard properties of algebraic $K$-theory that this defines a structure of traces on the presheaf of spectra $X\mapsto K(X)$: for additivity use \cite[1.7.2]{TT90}, for functoriality \cite[1.5.4]{TT90}, for base-change \cite[3.18]{TT90}, and for degree \cite[1.7.3]{TT90}. Given $X\in\Sch_k$, let $K_X\in\Ho(\Spt^{\Sm_X^\op})$ be the restriction of $K$ to $\Sm_X$. The trace maps $\Tr_{f\times_XU}\colon K(Y\times_XU)\to K(U)$ define morphisms of presheaves $\Tr_f\colon f_\ast K_Y\to K_X$ which endow $X\mapsto K_X$ with a structure of traces.
	
	The Bass $K$-theory presheaf $K^B_X$ can be defined by
	\[K^B_X=K_X^\sharp=\hocolim_n\R\Hom(\Sigma^\infty(\P^1)^{\wedge n}, K_X),\]
(see \cite[end of paragraph 2.5, Proposition 2.7, Proposition 2.10]{Cisinski:2013}). For any morphism $f\colon Y\to X$ in $\Sch_k$, the derived pushforward functor $f_\ast\colon\Ho(\Spt^{\Sm_Y^\op})\to\Ho(\Spt^{\Sm_X^\op})$ right adjoint to $f^\ast$ commutes with homotopy colimits by abstract nonsense, and so $f_\ast K^B_X\simeq (f_\ast K_X)^\sharp$. It follows that the structure of traces on the section $X\mapsto K_X$ induces a structure of traces on the section $X\mapsto K^B_X$.
	
	The homotopy algebraic $K$-theory presheaf of spectra $KH_X\colon\Sm_X\to\Spt$ is by definition the $\A^1$-localization of the presheaf $K^B_X$, that is,
	\[KH_X = L_{\A^1}K^B_X = \hocolim_n \R\Hom(\Sigma^\infty(\Delta^n_X)_+,K^B_X),\]
where $\Delta^\bullet_X$ is the usual cosimplicial diagram in $\Sm_X$ with $\Delta^n_X=\A^n_X$. As before, we have $f_\ast KH_Y\simeq L_{\A^1} f_\ast K^B_Y$ and hence an induced structure of traces on the section $X\mapsto KH_X$.

Denote by $\scr E(X)$ the homotopy category of $\P^1$-spectra in the category of presheaves of spectra on $\Sm_X$; its objects are sequences $(E_0,E_1,\dotsc)$ of presheaves of spectra on $\Sm_X$ together with equivalences $E_i\simeq\R\Hom(\Sigma^\infty\P^1,E_{i+1})$. The $2$-functor $X\mapsto\SH(\Sm_X)$ can then be identified with the sub-$2$-functor of $\scr E$ spanned by the $\A^1$- and Nisnevich-local objects. Under this identification, the motivic spectrum $\KGL_X\in\SH(\Sm_X)$ is the $\P^1$-spectrum $(KH_X,KH_X,\dotsc)$ where the equivalences $KH_X\simeq\R\Hom(\Sigma^\infty\P^1,KH_X)$ are given by Bott periodicity. Thus, it remains to observe that the trace maps $\Tr_f\colon f_\ast KH_Y\to KH_X$ are compatible with these equivalences, which follows easily from the definitions.
\end{proof}

\begin{proposition}
\label{prop:MZweakTraces}
For any $\EE\in\DM(\Sm_k,R)$,  $u^\tr\EE$ has a weak structure of smooth traces.
\end{proposition}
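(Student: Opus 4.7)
The plan is to exploit the transfer structure inherent in objects of $\DM(\Sm_k,R)$: we will build $\Tr_f$ from the transpose of the graph of $f$, applied sectionwise, and then verify compatibility with the six functor formalism in $\SH$.

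First I would fix a finite, flat, surjective, globally free map $f\colon Y\to X$ of degree $d$ with $X,Y\in\Sm_k$. For each $U\in\Sm_X$, the base change $f_U\colon U\times_X Y\to U$ is again of this type and has source and target in $\Sm_k$. The transpose of the graph of $f_U$ is a cycle in $U\times_k(U\times_X Y)$ whose projection onto $U$ is $f_U$ and is therefore finite and surjective, so it defines a finite correspondence $\tilde f_U\colon U\to U\times_X Y$ in $\Cor(\Sm_k)$. A direct cycle-theoretic computation (pull back to $U\times_k(U\times_X Y)\times_k U$, intersect, and push forward) gives the identity $\Gamma_{f_U}\circ\tilde f_U=d\cdot\id_U$ in $\Cor(\Sm_k)$.

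Then I would represent $\EE$ by a fibrant symmetric $R^\tr S^{2,1}$-spectrum $(E_0,E_1,\dotsc)$ of additive simplicial presheaves on $\Cor(\Sm_k)$. For each level $n$ and each $U\in\Sm_X$, evaluating $E_n$ on $\tilde f_U$ produces a simplicial map $E_n(U\times_X Y)\to E_n(U)$; these maps are natural in $U\in\Sm_X$ because the formation of the transpose cycle commutes with flat base change, and they are compatible with the bonding maps because the latter are morphisms in $\Spt^\tr(\Sm_k,R)$. Using Theorem~\ref{thm:EMLpb} together with the continuity results of~\S\ref{sub:basechange}, the presheaves of spectra $U\mapsto u^\tr E_\bullet(U)$ and $U\mapsto u^\tr E_\bullet(U\times_X Y)$ on $\Sm_X$ represent $(u^\tr\EE)_X$ and $f_\ast(u^\tr\EE)_Y$ respectively, so the sectionwise maps assemble into a morphism $\Tr_f\colon f_\ast(u^\tr\EE)_Y\to(u^\tr\EE)_X$ in $\SH(\Sm_X)$. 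The degree axiom $\Tr_f\circ(u^\tr\EE)_f=d\cdot\id$ then follows sectionwise from the correspondence identity $\Gamma_{f_U}\circ\tilde f_U=d\cdot\id_U$.

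The main obstacle is bookkeeping rather than anything deep: one must check that the derived base-change functors $a_X^\ast$ and $f_\ast$ appearing in $\SH(\Sm_X)$ agree with the naive presheaf-level restriction and pushforward on sufficiently fibrant representatives of $\EE$, so that the sectionwise construction genuinely gives a morphism in $\SH(\Sm_X)$; and one must verify that the transpose cycle $\tilde\Gamma_{f_U}$ commutes with flat base change in $U$, which reduces to a standard projection formula in the theory of relative cycles.
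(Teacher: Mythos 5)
Your argument is correct and follows essentially the same route as the paper's: the trace is built from the transpose ${}^t f$ viewed as a finite correspondence, and the degree axiom is the identity $f\circ{}^tf=d\cdot\id$ in $\Cor(\Sm_k)$, then descended to $\SH(\Sm_X)$ via restriction and $u^\tr$. Two minor remarks: naturality in $U\in\Sm_X$ holds for \emph{arbitrary} morphisms $V\to U$ (what matters is flatness of $f_U$, not of $g$), and the appeal to Theorem~\ref{thm:EMLpb} and the continuity results is overkill --- since $a\colon X\to\Spec k$ is a smooth ($\scr C$-)morphism, $\L a^\ast$ is computed by naive restriction, preserves local objects, and commutes with $u^\tr$ directly, which is how the paper passes from the presheaf-level trace on $\s\Pre^\tr(\Sm_k,R)$ to a weak structure of smooth traces on $u^\tr\EE$ (see \S\ref{sub:basechange}).
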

\begin{proof}
	If $f\colon Y\to X$ is a finite flap surjective of degree $d$ map in $\Sm_k$, its transpose ${^tf}$ is a finite correspondence $X\to Y$ such that $f\circ {^tf}=d\cdot\id_X$ in $\Cor(\Sm_k)$. It therefore induces, for any $F\in\s\Pre^\tr(\Sm_k,R)$, a map $\Tr_f\colon f_\ast F_Y\to F_X$ such that $\Tr_f\circ F_f=d\cdot\id_{F_X}$. Since $a\colon X\to\Spec k$ is smooth the functor $\L a^\ast\colon \H^\tr(\Sm_k,R)\to\H^\tr(\Sm_X,R)$ preserves $\A^1$- and Nisnevich-local objects and commutes with $\Omega^{2,1}$ (see \S\ref{sub:basechange}), so we obtain trace maps for the section $(a\colon X\to\Spec k)\mapsto\L a^\ast \EE$ of the $2$-functor $X\mapsto\DM(\Sm_X,R)$ on $\Sm_k$. Since $u^\tr$ is a morphism of $2$-functors which moreover commutes with $\L a^\ast$ for $a$ smooth, there is an induced weak structure of smooth traces on $u^\tr\EE$.
\end{proof}

\begin{corollary}[{\cite[Corollary 5.2.4]{KellyThesis}}]
\label{corollary:MZtraces}
Let $1/c\in R\subset\Q$. Then the motivic cohomology spectrum $\MM R$ has a structures of traces in $\SH(\Sm_k)$.
\end{corollary}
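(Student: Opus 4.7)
The plan is to apply Theorem~\ref{thm:slicetraces} to an $R$-local form of algebraic $K$-theory whose zeroth slice is $\MM R$. First I would equip $\KGL_R$, the $R$-localization of $\KGL$, with a structure of traces. By Proposition~\ref{prop:KGLtraces}, $\KGL$ itself has a structure of traces, and since $R$-localization is a Bousfield localization compatible with the pushforwards $f_\ast$ for finite flat surjective $f$ (via the projection formula), the trace maps $\Tr_f\colon f_\ast\KGL_Y\to\KGL_X$ remain coherent after $R$-localization, yielding a structure of traces on $\KGL_R$. Because $\Z[1/c]\subseteq R$, the spectrum $\KGL_R$ is automatically $\Z[1/c]$-local.

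Next I would invoke the slice computation for algebraic $K$-theory: over a perfect field of characteristic exponent $c$, after inverting $c$ one has $s_q\KGL\simeq\Sigma^{2q,q}\MM\Z[1/c]$ for every $q\in\Z$. This is due to Voevodsky in characteristic zero and is established in general in \cite{Hoyois} as part of the proof of the Hopkins--Morel isomorphism. Tensoring with $R$ then yields $s_q\KGL_R\simeq\Sigma^{2q,q}\MM R$, and in particular $s_0\KGL_R\simeq\MM R$. Since by definition $\MM R=u^\tr(R^\tr\1)$, each slice $\Sigma^{2q,q}\MM R$ lies in the essential image of $u^\tr\colon\DM(\Sm_k,R)\to\SH(\Sm_k)$, and hence carries a weak structure of smooth traces by Proposition~\ref{prop:MZweakTraces}.

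The hypotheses of Theorem~\ref{thm:slicetraces} are now all verified for $\EE=\KGL_R$: it is $\Z[1/c]$-local and equipped with a structure of traces, and each of its slices has a weak structure of smooth traces. The theorem therefore produces a structure of traces on every $s_q\KGL_R$, and specialization to $q=0$ gives the desired structure of traces on $\MM R$. The main nontrivial input is the slice computation, which rests on the Hopkins--Morel isomorphism; with that granted, the remainder is formal bookkeeping with Propositions~\ref{prop:KGLtraces} and~\ref{prop:MZweakTraces} and Theorem~\ref{thm:slicetraces}.
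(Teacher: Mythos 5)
Your proof is correct and follows essentially the same route as the paper: endow $\KGL\tens R$ with a structure of traces from Proposition~\ref{prop:KGLtraces}, identify the slices as $\Sigma^{2q,q}\MM R$, observe these have weak structures of smooth traces by Proposition~\ref{prop:MZweakTraces}, and apply Theorem~\ref{thm:slicetraces}. One small remark on attribution: the paper cites the slice computation $s_q\KGL\simeq\Sigma^{2q,q}\MM\Z$ to Levine \cite[Theorem 6.4.2]{Levine:2008} and Garkusha--Panin \cite[Theorem 7.10]{Panin:2012}, which hold over any perfect field with integral coefficients and do not require inverting $c$ or the Hopkins--Morel theorem; your appeal to Hopkins--Morel via \cite{Hoyois} is heavier machinery than needed, though it does no harm here since $1/c\in R$ is assumed anyway.
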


\begin{proof}
By \cite[Theorem 6.4.2]{Levine:2008} (or \cite[Theorem 7.10]{Panin:2012}), the $q$th slice of $\KGL$ is $\Sigma^{2q,q}\MM\Z$. As $s_q$ preserves homotopy colimits, the $q$th slice of $\KGL\tens R$ is $\Sigma^{2q,q}\MM R$, which has a weak structure of smooth traces by Proposition~\ref{prop:MZweakTraces}. Since $\KGL\tens R$ itself has a structure of traces by Proposition~\ref{prop:KGLtraces}, we may apply Theorem~\ref{thm:slicetraces} to deduce that $s_0(\KGL\tens R)\simeq \MM R$ has a structure of traces.
\end{proof}

\begin{proposition}
\label{proposition:moduletraces}
Let $\EE\in\SH(\Sm_k)$ be a motivic spectrum with a structure of traces (\resp{} a weak structure of smooth traces). Then for any $\mathsf F\in\SH(\Sm_k)$, $\EE\wedge \mathsf F$ has a structure of traces (\resp{} a weak structure of smooth traces).
\end{proposition}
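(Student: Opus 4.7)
The plan is to reduce everything to the projection formula in the symmetric monoidal stable homotopy $2$-functor $X\mapsto\SH(\Sm_X)$. For any $X\in\Sch_k$ with structure map $a_X\colon X\to\Spec k$, the functor $a_X^\ast$ is symmetric monoidal, so $(\EE\wedge\mathsf F)_X\simeq\EE_X\wedge\mathsf F_X$, and for any $f\colon Y\to X$ we have $\mathsf F_Y=f^\ast\mathsf F_X$. Since a finite flat surjective map $f\colon Y\to X$ is proper, the projection formula supplies a natural equivalence
$$\pi_f\colon f_\ast\EE_Y\wedge\mathsf F_X\xrightarrow{\sim}f_\ast(\EE_Y\wedge f^\ast\mathsf F_X)=f_\ast(\EE_Y\wedge\mathsf F_Y),$$
and I would define the trace along $f$ as the composite
$$\Tr^{\EE\wedge\mathsf F}_f\colon f_\ast((\EE\wedge\mathsf F)_Y)=f_\ast(\EE_Y\wedge\mathsf F_Y)\xrightarrow{\pi_f^{-1}}f_\ast\EE_Y\wedge\mathsf F_X\xrightarrow{\Tr_f^\EE\wedge\id_{\mathsf F_X}}\EE_X\wedge\mathsf F_X.$$

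Next I would verify each coherence condition of Definition~\ref{def:traces} by combining the corresponding coherence of $\pi$ with the corresponding axiom for $\Tr^\EE$. Additivity is immediate from compatibility of $\pi$ with the decomposition $X=X_1\amalg X_2$. Functoriality follows from the identity $\pi_{fg}=f_\ast(\pi_g)\circ\pi_f$ together with $\Tr^\EE_{fg}=\Tr_f^\EE\circ f_\ast\Tr_g^\EE$. Base change is obtained from the compatibility of the Beck--Chevalley transformations with $\pi$ and the base change axiom for $\Tr^\EE$. For the degree condition, when $f$ is globally free of degree $d$ one checks that $\pi_f$ identifies $\EE_f\wedge\id_{\mathsf F_X}$ with the unit map $(\EE\wedge\mathsf F)_f\colon(\EE\wedge\mathsf F)_X\to f_\ast((\EE\wedge\mathsf F)_Y)$, whence
$$\Tr^{\EE\wedge\mathsf F}_f\circ(\EE\wedge\mathsf F)_f=(\Tr_f^\EE\circ\EE_f)\wedge\id=d\cdot\id_{(\EE\wedge\mathsf F)_X}.$$

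The weak case is strictly easier, as only the degree condition need be checked, and the same computation applies verbatim with $f\colon Y\to X$ in $\Sm_k$. The only potential obstacle is the bookkeeping required to express each of the four coherences in terms of $\pi$; nothing beyond the standard compatibilities of the projection formula for proper morphisms, available in Ayoub's six-functor formalism for $X\mapsto\SH(\Sm_X)$, is needed.
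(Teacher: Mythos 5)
Your construction is exactly the one the paper uses: you define the trace on $\EE\wedge\mathsf F$ by composing the inverse of the projection-formula isomorphism (for the proper map $f$, available in Ayoub's formalism) with $\Tr_f^\EE\wedge\id_{\mathsf F_X}$, and the paper likewise invokes the exchange isomorphism $f_\ast\EE\wedge\mathsf F\xrightarrow{\sim}f_\ast(\EE\wedge f^\ast\mathsf F)$ and leaves the verification of the axioms as routine. Your sketch of the four coherence checks is more explicit than the paper's one-line remark, but the argument is the same.
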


\begin{proof}
	By \cite[Theorems 2.3.40 and 1.7.17]{Ayo07}, if $f\colon Y\to X$ is a projective morphism in $\Sch_k$, $\EE\in\SH(\Sm_Y)$, and $\mathsf F\in\SH(\Sm_X)$, the canonical map
	\begin{equation}\label{eqn:exchange}
		f_\ast\EE\wedge\mathsf F \to f_\ast(\EE\wedge f^\ast\mathsf F)
	\end{equation}
is an isomorphism. Thus, if $f\colon Y\to X$ is a finite flat surjective map, we can define $\Tr_f$ as the composition
\[f_\ast (\EE\wedge \mathsf F)_Y\cong f_\ast (\EE_Y\wedge f^\ast\mathsf F_X)\cong f_\ast\EE_Y\wedge\mathsf F_X \xrightarrow{ \Tr_f\wedge \mathsf F_X} \EE_X\wedge \mathsf F_X\cong (\EE\wedge \mathsf F)_X,\]
where the second isomorphism is an instance of~\eqref{eqn:exchange}. It is then easy to verify that this definition satisfies the axioms for a structure of traces (\resp{} a weak structure of smooth traces).
\end{proof}

\begin{corollary}\label{cor:MZmoduleTraces}
	Let $1/c\in R\subset\Q$ and $\EE\in\SH(\Sm_k)$. Then $\MM R\wedge \EE$ has a structure of traces.
	\end{corollary}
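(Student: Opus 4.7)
The plan is to deduce this immediately from the two results just established. First I would invoke Corollary~\ref{corollary:MZtraces} to equip $\MM R$ with a structure of traces (this is where the hypothesis $1/c\in R$ enters, since Corollary~\ref{corollary:MZtraces} requires the characteristic exponent to be invertible in $R$ in order to identify $\MM R$ as the zero-slice of $\KGL\wedge R$ and thereby transfer the $K$-theoretic traces via Theorem~\ref{thm:slicetraces}). Then I would apply Proposition~\ref{proposition:moduletraces} to the pair $(\EE_1,\mathsf F) = (\MM R,\EE)$, which takes a structure of traces on the first smash factor and produces one on the smash product by using the projection formula isomorphism~\eqref{eqn:exchange} to define
\[
\Tr_f \colon f_\ast(\MM R\wedge\EE)_Y \simeq f_\ast\MM R_Y\wedge\EE_X \xrightarrow{\Tr_f\wedge\id}\MM R_X\wedge\EE_X \simeq (\MM R\wedge\EE)_X
\]
for every finite flat surjective $f\colon Y\to X$ in $\Sch_k$.

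Since Proposition~\ref{proposition:moduletraces} already verifies all four axioms (additivity, functoriality, base change, degree) of Definition~\ref{def:traces} for the smash product construction, no further checking is required. Thus the proof is a one-line combination of the two cited results, with no new obstacle to overcome: the genuine work has been done in establishing Corollary~\ref{corollary:MZtraces} (which in turn rests on Proposition~\ref{prop:KGLtraces}, Proposition~\ref{prop:MZweakTraces}, and the slice computation $s_0\KGL\simeq\MM\Z$) and in setting up the projection formula used in Proposition~\ref{proposition:moduletraces}.
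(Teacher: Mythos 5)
Your proof is correct and is exactly the paper's argument: the paper's proof reads simply ``Combine Corollary~\ref{corollary:MZtraces} and Proposition~\ref{proposition:moduletraces}.'' Your additional unpacking of where the hypothesis $1/c\in R$ enters and how the projection-formula isomorphism~\eqref{eqn:exchange} yields the trace map is accurate and matches the machinery the cited results rely on.
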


\begin{proof}
	Combine Corollary~\ref{corollary:MZtraces} and Proposition~\ref{proposition:moduletraces}.
\end{proof}

We now turn to the proof that $\MM\ZZ_{(\ell)}$-modules are $\ldh$-local.

\begin{definition}\label{def:localSH}
	Let $\tau$ be a Grothendieck topology on $\Sm_k$. A motivic spectrum $\EE\in\SH(\Sm_k)$ is \emph{$\tau$-local} if it is local with respect to the class of maps
	\[\{\Sigma^{p,q}\Sigma^\infty\scr X_+\to\Sigma^{p,q}\Sigma^\infty X_+\suchthat \scr X\to X\text{ is a $\tau$-hypercover and }p,q\in\Z\}.\]
\end{definition}

Since $\Spt(\Sm_k)$ is a stable model category, $\SH(\Sm_k)$ is canonically enriched in the stable homotopy category $\Ho(\Spt)$. That is, for every $\EE,\mathsf F\in\SH(\Sm_k)$, we have a derived mapping spectrum $\R\Hom(\EE,\mathsf F)$ such that $\Omega^\infty\R\Hom(\EE,\mathsf F)\simeq \R\Map(\EE,\mathsf F)$. By \cite[Theorem 3.2.15]{CD}, there exists a presheaf of (symmetric) spectra $\underline{\EE}\colon\Sch_k^\op\to\Spt$ such that the composition of $\underline{\EE}$ with the canonical functor $\Spt\to\Ho(\Spt)$ coincides with the functor
\[(a\colon X\to\Spec k)\mapsto\R\Hom(\1,\R a_\ast\L a^\ast\EE).\]
Note that the restriction of $\underline{\EE}$ to $\Sm_k$ is equivalent to $X\mapsto\R\Hom(\Sigma^\infty X_+,\EE)$. We can thus rephrase Definition~\ref{def:localSH} as follows: $\EE\in\SH(\Sm_k)$ is $\tau$-local if and only if, for every $p,q\in\Z$, the presheaf of spectra $\underline{\Sigma^{p,q}\EE}|_{\Sm_k}$ satisfies $\tau$-descent.

\begin{theorem}[{\cite[Theorem 5.3.7]{KellyThesis}}]
\label{theorem:ldescent}
Every $\ZZ_{(\ell)}$-local motivic spectrum $\EE\in\SH(\Sm_k)$ with a structure of traces is $\ldh$-local. 
\end{theorem}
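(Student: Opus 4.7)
The plan is to reduce the theorem to a single application of Lemma~\ref{lemma:ThePostnikovtowerargument} with $\sigma=\cdh$ and $\tau=\ldh$, applied to the presheaf of spectra $\underline{\EE}$ on $\Sch_k$. Since Definition~\ref{def:localSH} demands $\ldh$-descent for every shift $\Sigma^{p,q}\EE$, and Proposition~\ref{proposition:moduletraces} ensures that each such shift inherits a structure of traces from $\EE$, it suffices to prove $\underline{\EE}|_{\Sm_k}$ is $\ldh$-local; restriction of an $\ldh$-local presheaf on $\Sch_k$ to $\Sm_k$ is again $\ldh$-local because $\ldh$-hypercovers in $\Sm_k$ remain $\ldh$-hypercovers in $\Sch_k$.

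Hypothesis (1) of Lemma~\ref{lemma:ThePostnikovtowerargument}, i.e.\ $\cdh$-locality of $\underline{\EE}$, holds for any motivic spectrum by Ayoub's proper base change theorem \cite{Ayo07}. Hypothesis (2), finite $\cdh$-cohomological dimension, is given by Suslin--Voevodsky \cite{SV:2000}. The heart of the argument is therefore hypothesis (3): the comparison map $H^p_\cdh(X,a_\cdh\pi_q\underline{\EE})\to H^p_\ldh(X,a_\ldh\pi_q\underline{\EE})$ is an isomorphism for all $X\in\Sch_k$ and $p,q\in\Z$.

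For hypothesis (3), I would first invoke Remark~\ref{remark:traces} to transport the structure of traces from $\EE$ onto each presheaf of homotopy groups $X\mapsto \pi_q\underline{\EE}(X)\cong[\Sigma^q\1_X,\EE_X]$, which is a presheaf of $\Z_{(\ell)}$-modules by the $\Z_{(\ell)}$-locality assumption. Next, I would verify the two hypotheses of Theorem~\ref{theorem:cdhlprimeComparison}: reduction-invariance, which follows from topological invariance of $\SH(\Sm_X)=\SH(\Sm_{X_\red})$ (so $\EE_X\simeq \EE_{X_\red}$); and the unramified property of $a_\Nis\pi_q\underline{\EE}|_{\Sm_k}$, which is standard for motivic spectra via Nisnevich descent and the purity of smooth schemes. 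Theorem~\ref{theorem:cdhlprimeComparison} then endows $a_\cdh\pi_q\underline{\EE}$ with a canonical transfer structure, turning it into a $\cdh$-sheaf of $\Z_{(\ell)}$-modules with transfers to which Theorem~\ref{thm:cdh=ldh} applies verbatim, delivering hypothesis (3).

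The main obstacle is the passage from a structure of traces on the motivic spectrum $\EE$ to a genuine structure of transfers on the $\cdh$-sheafified homotopy presheaves $a_\cdh\pi_q\underline{\EE}$, which is controlled by Theorem~\ref{theorem:cdhlprimeComparison}; in practice this reduces to checking the reduction-invariance and unramified axioms for each $\pi_q\underline{\EE}$, both of which are formal consequences of the $\cdh$-locality of $\underline{\EE}$ and basic properties of $\SH(\Sm_X)$. Once this input is secured, Theorem~\ref{thm:cdh=ldh} provides the $\cdh$-vs-$\ldh$ comparison freely, and Lemma~\ref{lemma:ThePostnikovtowerargument} assembles these sheaf-cohomological isomorphisms into the desired $\ldh$-descent for $\underline{\EE}$.
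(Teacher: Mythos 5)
Your proposal follows essentially the same route as the paper: reduce to $p=q=0$, apply Lemma~\ref{lemma:ThePostnikovtowerargument} with $\sigma=\cdh$ and $\tau=\ldh$, and secure hypothesis (3) via Theorems~\ref{theorem:cdhlprimeComparison} and~\ref{thm:cdh=ldh} applied to the presheaves $\pi_q\underline{\EE}$ with their induced trace structure. The one place where you should be more precise is the unramified axiom: it is not a formal consequence of $\cdh$-locality or of Nisnevich descent alone, but rather of Morel's theorem that $a_\Nis\pi_q\underline{\EE}|_{\Sm_k}$ is strictly $\A^1$-invariant and that strictly $\A^1$-invariant Nisnevich sheaves are unramified, which is exactly the reference the paper invokes at that step.
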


\begin{proof}
We must show that the presheaf of spectra $\underline{\Sigma^{p,q}\EE}|_{\Sm_k}$ satisfies $\ldh$-descent for every $p,q\in\Z$. We will even prove that $\underline{\Sigma^{p,q}\EE}$ satisfies $\ldh$-descent. We may clearly assume that $p=q=0$ since the hypotheses on $\EE$ are bistable.

As the cdh topology has finite cohomological dimension \cite[Theorem 12.5]{SV:2000} and $\underline{\EE}$ is $\cdh$-local \cite[Proposition 3.7]{Cisinski:2013}, Lemma~\ref{lemma:ThePostnikovtowerargument} will show that $\underline{\EE}$ is $\ldh$-local provided that the canonical maps
\[H^s_{\cdh}(X, a_\cdh\pi_{t}\underline{\EE})\to H^s_{\ldh}(X, a_\ldh\pi_{t}\underline{\EE}) \]
are isomorphisms for all $s,t\in\Z$. By Theorems~\ref{thm:cdh=ldh} and~\ref{theorem:cdhlprimeComparison}, this will be the case if
\begin{enumerate}\setcounter{enumi}{-1}
	\item $\pi_t\underline{\EE}$ is a presheaf of $\zll$-modules with traces,
	\item $\pi_t\underline{\EE}(X)\to \pi_t\underline{\EE}(X_\red)$ is an isomorphism for every $X\in\Sch_k$,
	\item $a_\Nis\pi_t\underline{\EE}|_{\Sm_k}$ is unramified.
\end{enumerate}
(0) holds by assumption on $\EE$ (see Remark~\ref{remark:traces}) and (1) is clear because the restriction functor $\SH(\Sm_X)\to \SH(\Sm_{X_\red})$ is an equivalence of categories.
The sheaf $a_\Nis\pi_t\underline{\EE}|_{\Sm_k}$ is strictly $\A^1$-invariant by \cite[Remark 5.1.13]{Morel:2003}, and in particular is unramified (see \cite[Example 2.3]{Morel:2012}).
\end{proof}

\begin{corollary}\label{corollary:ldescent}
	Every $\MM\Z_{(\ell)}$-module spectrum in $\SH(\Sm_k)$ is $\ldh$-local.
\end{corollary}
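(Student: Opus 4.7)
The plan is to reduce the statement to Theorem~\ref{theorem:ldescent} via a retract argument. Let $N$ be an $\MM\ZZ_{(\ell)}$-module with unit map $\eta\colon\one\to\MM\ZZ_{(\ell)}$ and action $\mu\colon\MM\ZZ_{(\ell)}\wedge N\to N$. The unit axiom for a module gives an identity
\[
\id_N = \mu\circ(\eta\wedge\id_N)\colon N\to \MM\ZZ_{(\ell)}\wedge N\to N,
\]
so $N$ is a retract of $\MM\ZZ_{(\ell)}\wedge N$ in $\SH(\Sm_k)$.

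First I would show that $\MM\ZZ_{(\ell)}\wedge N$ satisfies the hypotheses of Theorem~\ref{theorem:ldescent}. Since $\ell\neq c$, we have $1/c\in\ZZ_{(\ell)}$, so Corollary~\ref{cor:MZmoduleTraces} applied with $R=\ZZ_{(\ell)}$ and $\EE=N$ shows that $\MM\ZZ_{(\ell)}\wedge N$ carries a structure of traces. It is also $\ZZ_{(\ell)}$-local: any $\MM\ZZ_{(\ell)}$-module is $\ZZ_{(\ell)}$-local because for each prime $p\neq\ell$ multiplication by $p$ is invertible on $\MM\ZZ_{(\ell)}$ (since $\ZZ_{(\ell)}$ is a $\ZZ_{(\ell)}$-module), and hence by the module action it is invertible on any $\MM\ZZ_{(\ell)}$-module; in particular on $\MM\ZZ_{(\ell)}\wedge N$. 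Theorem~\ref{theorem:ldescent} then yields that $\MM\ZZ_{(\ell)}\wedge N$ is $\ldh$-local.

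Finally, by Definition~\ref{def:localSH}, $\ldh$-local objects in $\SH(\Sm_k)$ are exactly the local objects for a fixed class of maps, so the full subcategory of $\ldh$-local motivic spectra is closed under retracts. Since $N$ is a retract of the $\ldh$-local spectrum $\MM\ZZ_{(\ell)}\wedge N$, it is itself $\ldh$-local, as desired.

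There is no serious obstacle here: the entire statement is a short formal consequence of Theorem~\ref{theorem:ldescent} and Corollary~\ref{cor:MZmoduleTraces}; the only verification worth mentioning is that $\MM\ZZ_{(\ell)}$-modules are automatically $\ZZ_{(\ell)}$-local, which follows from the module structure alone.
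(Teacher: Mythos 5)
Your proof is correct and follows the same strategy as the paper: realize the module $N$ as a retract of $\MM\Z_{(\ell)}\wedge N$, apply Corollary~\ref{cor:MZmoduleTraces} and Theorem~\ref{theorem:ldescent} to the free module, and conclude by closure of $\ldh$-local objects under retracts. You also explicitly verify the $\Z_{(\ell)}$-locality hypothesis of Theorem~\ref{theorem:ldescent}, which the paper's one-line proof leaves implicit; that check is indeed needed and your argument for it is sound.
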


\begin{proof}
	Let $\EE$ be an $\MM\Z_{(\ell)}$-module spectrum. Then $\EE$ is a retract of $\MM\zll\wedge\EE$ in $\SH(\Sm_k)$, and so it suffices to show that $\MM\zll\wedge\EE$ is $\ldh$-local. This follows from Corollary~\ref{cor:MZmoduleTraces} and Theorem~\ref{theorem:ldescent}.
\end{proof}

\begin{corollary}[{\cite[Corollary 5.3.8]{KellyThesis}}]
\label{corollary:elldescent1}
Let $R$ be a $\Z_{(\ell)}$-algebra.
If $\X \to X$ is a smooth $\ldh$-hypercover of $X\in\Sm_k$, 
then the induced map $\L R^\tr \X_+ \to \L R^\tr X_+$ is an isomorphism in $\H^\tr_{\Nis,\A^1}(\Sm_k,R)$.
\end{corollary}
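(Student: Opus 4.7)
The plan is to carry out a Postnikov-tower descent argument paralleling the proof of Theorem~\ref{theorem:ldescent}. First I would reduce the claim to the following: for every $R^\tr W_{\Nis,\A^1}$-local object $F\in\s\Pre^\tr(\Sm_k,R)$, the underlying pointed simplicial presheaf $u^\tr F$ is $W_\ldh$-local on $\Sm_k$. Indeed, by the adjunction $(\L R^\tr, u^\tr)$, the map $\L R^\tr \X_+\to \L R^\tr X_+$ is a motivic equivalence if and only if, for every motivically fibrant $F$, the induced map $\R\Map(X_+, u^\tr F)\to \R\Map(\X_+, u^\tr F)$ is an equivalence; and since $\X\to X$ is an $\ldh$-hypercover, this holds whenever $u^\tr F$ is $W_\ldh$-local.

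To establish this $\ldh$-locality, I would apply Lemma~\ref{lemma:ThePostnikovtowerargument} with $\sigma=\Nis$ and $\tau=\ldh$ on $\Sm_k$, to the presheaf of connective spectra associated to $u^\tr F$ via Dold--Kan. Hypothesis (1) holds because the Nisnevich topology is compatible with $R$-transfers (Lemma~\ref{lem:compatiblewithtransfers}), so $u^\tr F$ is Nisnevich-local; hypothesis (2) holds because smooth $k$-schemes have finite Nisnevich cohomological dimension. For hypothesis (3), the homotopy sheaves $\pi_q u^\tr F$ inherit a structure of Nisnevich sheaves of $R$-modules with transfers from $F$, and they are strictly $\A^1$-invariant since $F$ is $\A^1$-local. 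The Voevodsky--Suslin comparison for strictly $\A^1$-invariant Nisnevich sheaves with transfers yields $H^p_\Nis(Y, a_\Nis \pi_q u^\tr F)\cong H^p_\cdh(Y, a_\cdh \pi_q u^\tr F)$ for $Y\in\Sm_k$; then, since $R$ is a $\Z_{(\ell)}$-algebra, Theorem~\ref{thm:cdh=ldh} supplies the remaining identification $H^p_\cdh\cong H^p_\ldh$.

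The main technical obstacle is the bookkeeping needed to invoke these comparison theorems on $\Sm_k$, given that Theorem~\ref{thm:cdh=ldh} is phrased on $\Sch_k$. This is harmless because of Corollary~\ref{corollary:l'resolution}, which ensures that $\Sm_k\hookrightarrow\Sch_k$ is cocontinuous for the $\ldh$-topology (already exploited in Proposition~\ref{prop:ldhTransfers}), so that cohomology is preserved under the appropriate restriction/extension. Crucially, the argument is simpler than that of Theorem~\ref{theorem:ldescent}, since our $F$ already carries honest $R$-transfers; we therefore bypass the traces-to-transfers passage via Theorem~\ref{theorem:cdhlprimeComparison} and apply the Nisnevich/$\cdh$/$\ldh$ comparison directly to the structure of transfers inherited from $F$.
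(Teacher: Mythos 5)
Your proposal takes a genuinely different, unstable route: you apply the Postnikov tower argument (Lemma~\ref{lemma:ThePostnikovtowerargument}) directly on $\Sm_k$ with $\sigma=\Nis$ and $\tau=\ldh$, attempting to bypass the paper's stabilization step and its reliance on the traces machinery. The paper instead uses the cancellation theorem to pass to $\DM(\Sm_k,R)$, factors $\L R^\tr\Sigma^\infty(\ph)_+$ through $\Ho(\MM R\Mod)$, and concludes by Corollary~\ref{corollary:ldescent} that $\MM\Z_{(\ell)}$-modules are $\ldh$-local; that corollary is the payoff of the traces-and-slices development of \S\ref{subsection:tfsI}, whose key input is the $\cdh$-locality of motivic spectra (Cisinski's blowup/proper base change descent) together with Theorem~\ref{thm:cdh=ldh}.

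The gap in your argument is the claimed identification $H^p_\Nis(Y,a_\Nis \pi_q u^\tr F)\cong H^p_\cdh(Y,a_\cdh \pi_q u^\tr F)$ for $Y\in\Sm_k$, which you attribute to a ``Voevodsky--Suslin comparison for strictly $\A^1$-invariant Nisnevich sheaves with transfers.'' No such comparison is cited or proved in the paper, and the classical Friedlander--Voevodsky proof of Nisnevich-to-$\cdh$ descent for homotopy invariant sheaves with transfers requires resolution of singularities, which is unavailable in characteristic $p>0$. In positive characteristic with $\Z_{(\ell)}$-coefficients, that comparison is itself established only through the alterations/traces apparatus you are trying to avoid; using it here would be circular. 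Note also that the paper's Postnikov argument (in Proposition~\ref{prop:ldhTransfers} and Theorem~\ref{theorem:ldescent}) is run on $\Sch_k$ with $\sigma=\cdh$, never on $\Sm_k$ with $\sigma=\Nis$: the $\cdh$-locality is obtained for free from the stable motivic homotopy category, not from an unstable sheaf-theoretic comparison. Your appeal to Corollary~\ref{corollary:l'resolution} does not help either, since cocontinuity of $\Sm_k\hookrightarrow\Sch_k$ is available only for the $\ldh$-topology, not for $\cdh$, so even the bookkeeping step of transferring a $\cdh$-cohomology computation from $\Sch_k$ to $\Sm_k$ is not ``harmless.'' The correct route is the paper's: stabilize via the cancellation theorem, and invoke the $\ldh$-locality of $\MM\Z_{(\ell)}$-modules already established in Corollary~\ref{corollary:ldescent}.
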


\begin{proof}
By \cite[Theorem 1.15]{VV:EMspaces} and Voevodsky's Cancellation Theorem \cite{VV:cancellation}, the stabilization functor 
$\Sigma^\infty\colon\H^\tr_{\Nis,\A^1}(\Sm_k,R)\to\DM(\Sm_k,R)$ is fully faithful. Thus, it suffices to show that any smooth $\ldh$-hypercover 
gives rise to an isomorphism in $\DM(\Sm_k,R)$. 
Recall from \S\ref{sub:EML} that the functor $\L R^\tr\Sigma^\infty(\ph)_+$ factors through the functor 
$\MM R\wedge\Sigma^\infty(\ph):\H^\pt_{\Nis,\A^1}(\Sm_k) \to\Ho(\MM R\Mod)$ to the highly structured 
category of modules over $\MM R$, and this 
reduces the problem to showing that the map $\MM R\wedge\Sigma^\infty\X_+ \to\MM R\wedge\Sigma^\infty X_+$ is
an isomorphism for every smooth $\ldh$-hypercover $\X \to X$. This follows from Corollary~\ref{corollary:ldescent}.
\end{proof}

\begin{corollary}\label{corollary:elldescent}
Let $R$ be a $\Z_{(\ell)}$-algebra. 
Then the localization functor $\H^\tr(\Sm_k,R)\to\H^\tr_{\Nis,\A^1}(\Sm_k,R)$ factors through $\H^\tr_\ldh(\Sm_k,R)$.
\end{corollary}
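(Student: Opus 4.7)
The plan is to reduce this statement to the content of Corollary~\ref{corollary:elldescent1} via the universal property of left Bousfield localization. Concretely, since $\H^\tr_\ldh(\Sm_k,R)$ is the homotopy category of the left Bousfield localization of $\s\Pre^\tr(\Sm_k,R)$ at the class $R^\tr W_\ldh$, the localization $L_{\Nis,\A^1}\colon\H^\tr(\Sm_k,R)\to\H^\tr_{\Nis,\A^1}(\Sm_k,R)$ factors through $\H^\tr_\ldh(\Sm_k,R)$ if and only if $L_{\Nis,\A^1}$ inverts every map in $R^\tr W_\ldh$. So the task reduces to verifying that any map of the form $\L R^\tr\scr X_+\to \L R^\tr X_+$, where $\scr X\to X$ is an $\ldh$-hypercover in $\Sm_k$, becomes an isomorphism in $\H^\tr_{\Nis,\A^1}(\Sm_k,R)$.

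Next I would observe that the $\ldh$-topology on $\Sm_k$ is by definition the topology induced from that on $\Sch_k$, so an $\ldh$-hypercover in $\Sm_k$ is in particular a smooth hypercover, i.e., a hypercover $\scr X\to X$ such that each $\scr X_n$ lies in $\Sm_k$ and $X\in\Sm_k$. Therefore the generating maps of $R^\tr W_\ldh$ are exactly the maps to which Corollary~\ref{corollary:elldescent1} applies.

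Invoking that corollary, each such $\L R^\tr\scr X_+\to \L R^\tr X_+$ is already an isomorphism in $\H^\tr_{\Nis,\A^1}(\Sm_k,R)$; equivalently, $L_{\Nis,\A^1}$ inverts $R^\tr W_\ldh$. The universal property of the $\ldh$-localization then produces the desired factorization $\H^\tr(\Sm_k,R)\to\H^\tr_\ldh(\Sm_k,R)\to\H^\tr_{\Nis,\A^1}(\Sm_k,R)$, unique up to unique natural isomorphism.

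There is essentially no obstacle here once Corollary~\ref{corollary:elldescent1} is in hand: the real work was already done in Theorem~\ref{theorem:ldescent} (and the structure of traces on $\MM\zll$ coming from Corollary~\ref{corollary:MZtraces}), which is what feeds into Corollary~\ref{corollary:elldescent1}. The present corollary is just the reformulation of that result as a factorization statement at the level of homotopy categories.
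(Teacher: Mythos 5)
Your argument is correct and is exactly the rephrasing the paper has in mind: the paper's proof simply says ``This is just a rephrasing of Corollary~\ref{corollary:elldescent1},'' and you have spelled out precisely why, via the universal property of left Bousfield localization and the observation that $R^\tr W_\ldh$ on $\Sm_k$ is generated by the maps handled in Corollary~\ref{corollary:elldescent1}. No discrepancy.
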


\begin{proof}
This is just a rephrasing of Corollary~\ref{corollary:elldescent1}.
\end{proof}

\subsection{End of the proof}

We now give the proof of Theorem~\ref{theorem:CDcomm}. As in \cite[end of \S1.3]{VV:EMspaces}, we may assume that $\scr C=\Sm_k$ and $\scr D=\Sch_k$.
Since the restriction functors $i^\ast\colon\H^\pt(\Sch_k)\to\H^\pt(\Sm_k)$ and $i^\ast\colon\H^\tr(\Sch_k,R)\to\H^\tr(\Sm_k,R)$ 
preserve $\A^1$-Nisnevich-local equivalences, it suffices to show that for any $F\in\H^\pt(\Sch_k)$ the canonical map
\[\L R^\tr i^\ast F\to i^\ast\L R^\tr F\]
in $\H^\tr(\Sm_k,R)$ becomes an isomorphism in $\H^\tr_{\Nis,\A^1}(\Sm_k,R)$. In the commutative square
\begin{tikzmath}
	\diagram{\L R^\tr i^\ast\L i_! i^\ast & \L R^\tr i^\ast \\
	i^\ast \L R^\tr\L i_! i^\ast & i^\ast \L R^\tr\rlap, \\};
	\arrows (11-) edge (-12) (21-) edge (-22) (11) edge (21) (12) edge (22);
\end{tikzmath}
the upper horizontal arrow and the left vertical arrow are isomorphisms since $\L i_!$ is fully faithful. Thus, it suffices to show that the lower horizontal arrow becomes an isomorphism in $\H^\tr_{\Nis,\A^1}(\Sm_k,R)$. Since $\L i_! i^\ast F\to F$ is an isomorphism when restricted to $\Sm_k$, it is an $\ldh$-local equivalence by Corollary~\ref{corollary:l'resolution}. Thus, $\L R^\tr \L i_! i^\ast F\to \L R^\tr F$ is an $R^\tr W_\ldh$-local equivalence. By Proposition~\ref{prop:ldhTransfers} (3), $i^\ast \L R^\tr \L i_! i^\ast F\to i^\ast\L R^\tr F$ is therefore an $R^\tr W_{\ldh}$-local equivalence in $\H^\tr(\Sm_k,R)$. Finally, by Corollary~\ref{corollary:elldescent}, $R^\tr W_{\ldh}$-local equivalences in $\H^\tr(\Sm_k,R)$ become equivalences in $\H^\tr_{\Nis,\A^1}(\Sm_k,R)$, as was to be shown.

\section{Applications} 
\label{section:applications}

In this section we record some applications of the results of this paper
which were previously only known for fields of characteristic zero.

\subsection{The structure of the motivic Steenrod algebra and its dual}

The goal of this paragraph is to generalize the structure theorems of Voevodsky for the motivic Steenrod algebra over perfect fields to essentially smooth schemes over fields.
Throughout, the base scheme $S$ is essentially smooth over a field, and $\ell\neq\Char S$ is a fixed prime number. We abbreviate $\MM\Z/\ell$ to $\MM$, and we write $\scr A^\aast$ for the motivic Steenrod algebra at $\ell$, which by Theorem~\ref{theorem:main} is the algebra of all bistable operations in mod $\ell$ motivic cohomology of smooth $S$-schemes, and also the algebra of bigraded endomorphisms of the motivic Eilenberg--Mac Lane spectrum $\MM$. Recall that $\scr A^\aast$ is generated by the reduced power operations $P^i$, the Bockstein $\beta$, and the subalgebra $\HH^\aast(S,\Z/\ell)$. As usual, we write
\[B^i=\beta P^i,\]
and if $\ell=2$,
\[\Sq^{2i}=P^i\quad\text{and}\quad\Sq^{2i+1}=B^i.\]
If $\ell=2$, let $\rho$ be the image of $-1\in\G_{\mathfrak{m}}(S)$ in \[\HH^{1,1}(S,\Z/2)=\HH^1_\et(S,\mu_2)\] and let $\tau$ be the nonvanishing element of \[\HH^{0,1}(S,\Z/2)=\mu_2(S)\cong\Hom(\pi_0(S),\Z/2)\] (recall that $\Char S\neq 2$ if $\ell=2$).

\begin{theorem}[The Adem relations]
\label{theorem:adem}
\leavevmode
\begin{enumerate}
	\item Assume $\ell\neq 2$. If $0<a<\ell b$, then
	\[P^aP^b=\sum_{t=0}^{\lfloor a/\ell\rfloor}(-1)^{a+t}\binom{(\ell-1)(b-t)-1}{a-\ell t}P^{a+b-t}P^t.\]
	If $0< a\leq \ell b$, then
	\begin{align*}
		P^aB^b =&\sum_{t=0}^{\lfloor a/\ell\rfloor}(-1)^{a+t}\binom{(\ell-1)(b-t)}{a-\ell t}B^{a+b-t}P^t \\
	+&\sum_{t=0}^{\lfloor(a-1)/\ell\rfloor}(-1)^{a+t-1}\binom{(\ell-1)(b-t)-1}{a-\ell t}P^{a+b-t}B^t.
	\end{align*}
	\item Assume $\ell=2$ and $0<a<2b$. If $a$ and $b$ are even, then
	\[\Sq^{a}\Sq^{b}=\sum_{t=0}^{\lfloor a/2\rfloor}\tau^{t\,\mathrm{mod}\, 2}\binom{b-t-1}{a-2t}\Sq^{a+b-t}\Sq^t.\]
	If $a$ is even and $b$ is odd, then
	\begin{align*}
		\Sq^{a}\Sq^{b}=&\sum_{t=0}^{\lfloor a/2\rfloor}\binom{b-t-1}{a-2t}\Sq^{a+b-t}\Sq^t\\
		+&\sum_{\substack{t=0\\t\text{ odd}}}^{\lfloor a/2\rfloor}\binom{b-t-1}{a-2t}\rho\Sq^{a+b-t-1}\Sq^t.
	\end{align*}
	If $a$ is odd and $b$ is even, then
	\begin{align*}
		\Sq^{a}\Sq^{b}=&\sum_{\substack{t=0\\t\text{ even}}}^{\lfloor a/2\rfloor}\binom{b-t-1}{a-2t}\Sq^{a+b-t}\Sq^t\\
		+& \sum_{\substack{t=0\\t\text{ odd}}}^{\lfloor a/2\rfloor}\binom{b-t-1}{a-2t-1}\rho\Sq^{a+b-t-1}\Sq^t.
	\end{align*}
	If $a$ and $b$ are odd, then
	\[\Sq^{a}\Sq^{b}=\sum_{\substack{t=0\\t\text{ odd}}}^{\lfloor a/2\rfloor}\binom{b-t-1}{a-2t}\Sq^{a+b-t}\Sq^t.\]
\end{enumerate}
\end{theorem}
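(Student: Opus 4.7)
The plan is to reduce to the case of a perfect base field via base change, and then invoke Voevodsky's computation. Using Lemma~\ref{lem:imperfectext} we fix an essentially smooth morphism $f\colon S\to\Spec k$ with $k$ perfect of characteristic $\neq\ell$. The pullback map~\eqref{eqn:A**pullback} $f^\ast\colon \AAA^\star_k\to \AAA^\star_S$ is an algebra homomorphism sending $P^i\mapsto P^i$ and $\beta\mapsto\beta$ by construction (see \S2.4); for $\ell=2$ it further sends $\tau_k\mapsto\tau_S$ and $\rho_k\mapsto\rho_S$ by naturality of motivic cohomology and of the unit $-1\in\G_m$. Since the two sides of every claimed identity are integer- (respectively $\ZZ/2[\tau,\rho]$-) linear combinations of composites of these generators and the relations are preserved by any algebra homomorphism fixing the generators, it suffices to establish the relations in $\AAA^\star_k$.

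Over a perfect field of characteristic $\neq\ell$, the odd-primary Adem relations~(1) are exactly Voevodsky's \cite[Theorem 10.2]{VV:motivicalgebra}. Voevodsky's argument derives them from the motivic Cartan formula \cite[Proposition 9.7]{VV:motivicalgebra} by evaluating both sides on the motivic cohomology of a sufficiently large product of copies of $B\mu_\ell$, and then appealing to the injectivity of the resulting evaluation map $\AAA^\star\to \prod_n\HH^\star(B(\mu_\ell)^{\times n},\ZZ/\ell)$. In characteristic zero this injectivity followed from the identification $\AAA^\star=\MMM^\star$ via topological realization; in positive characteristic, it is now supplied by our Theorem~\ref{theorem:main}~(1). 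The remaining inputs (the motivic Cartan formula, the action of $P^i$ on the motivic Chern classes, and the projective bundle formula) are characteristic-free, so Voevodsky's proof carries over unchanged.

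The $\ell=2$ relations~(2) follow by the same strategy, but the motivic Cartan formula now reads $\Sq^{2n}(xy)=\sum_i\Sq^i(x)\Sq^{2n-i}(y)+\tau\sum_i\Sq^{2i+1}(x)\Sq^{2n-2i-1}(y)$, with an analogous $\rho$-correction appearing in the formula for $\Sq^{2n+1}(xy)$ over a base where $[-1]\neq 0$. The four-case split according to the parities of $a$ and $b$ in~(2) is precisely the bookkeeping of these correction terms propagated through the standard derivation of the Adem relations from the Cartan formula and the action on products of fundamental classes of $B\mu_2$. The main obstacle is therefore purely computational: one must carefully track the powers of $\tau$ and $\rho$ at every stage, as the algebraic structure of the argument is otherwise identical to the odd-primary case. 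As a consistency check, applying the étale realization $\phi$ of \S\ref{subsection:etalerealization} after base change to an algebraic closure (where $\rho$ vanishes) reduces~(2) to the classical topological Adem relations multiplied by the appropriate powers of $\tau$, in agreement with Lemma~\ref{lem:adem}.
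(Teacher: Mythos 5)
Your overall strategy matches the paper's: fix an essentially smooth morphism $f\colon S\to\Spec k$ with $k$ perfect (using Lemma~\ref{lem:imperfectext}), observe that $f^\ast\colon\AAA^\star_k\to\AAA^\star_S$ is an algebra homomorphism preserving $P^i$, $\beta$, and (for $\ell=2$) $\tau$, $\rho$, and reduce to Voevodsky's computation over a perfect field.

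However, you introduce a spurious gap. You claim that over a perfect field Voevodsky's proof of the Adem relations (your step 2) relies on the injectivity of an evaluation map that in turn uses $\AAA^\star=\MMM^\star$ via topological realization, and that Theorem~\ref{theorem:main}~(1) must therefore be imported to replace it in positive characteristic. This is a misattribution. The Adem relations, together with the admissible-monomial basis of $\AAA^\star$, are established in \cite[Theorems 10.2 and 10.3]{VV:motivicalgebra} over \emph{any} perfect field of characteristic $\neq\ell$; that argument is internal to the subalgebra $\AAA^\star$ generated by the $P^i$, $\beta$ and $\HH^\star(k,\ZZ/\ell)$, and uses no topological realization and no comparison with the full algebra $\MMM^\star$ of bistable operations. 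Topological realization enters only in the later paper \cite{VV:EMspaces} to prove $\AAA^\star=\MMM^\star$ in characteristic zero, which is exactly what the present paper re-proves in positive characteristic; it is logically irrelevant to the internal relations of $\AAA^\star$. Consequently, the paper's proof of this theorem cites only \cite[Theorems 10.2 and 10.3]{VV:motivicalgebra} and the typo corrections in \cite[Th\'eor\`emes 4.5.1 et 4.5.2]{Riou} (which you omit), and does not invoke Theorem~\ref{theorem:main} at all. Your reduction step is fine, but the claim about a characteristic-zero dependency in \cite{VV:motivicalgebra}, and the resulting appeal to Theorem~\ref{theorem:main}~(1), should be removed.
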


\begin{proof}
	If $S$ is a perfect field, this was originally proved in \cite[Theorems 10.2 and 10.3]{VV:motivicalgebra}, but with some typos that were corrected in \cite[Théorèmes 4.5.1 et 4.5.2]{Riou}. In general, choose an essentially smooth morphism $f\colon S\to\Spec k$ where $k$ is a perfect field. The theorem follows from the fact that the base change map $f^\ast\colon\scr A^\aast_k\to\scr A^\aast_S$ is an algebra homomorphism which preserves $P^i$ and $\beta$, and, when $\ell=2$, $\tau$ and $\rho$.
\end{proof}

Note that in the case $\ell=2$, the Adem relations for $a$ odd are obtained from the ones for $a$ even by application of $\beta$, using that $\beta$ is a derivation, that $\beta(\tau)=\rho$, and that $\beta(\rho)=0$.
The Adem relations (together with the relation $\beta^2=0$) can be used to express any monomial in the operations $P^i$ and $\beta$ as a linear combination of the basis operations described in Theorem~\ref{theorem:main}. The only additional piece of information needed to obtain a presentation of $\scr A^\aast$ as an algebra is thus the action of $\scr A^\aast$ on $\HH^\aast(S,\Z/l)$.

Let now $\AAA_\star$ be the dual motivic Steenrod algebra over $S$, that is, \[\AAA_\star=\Hom_{\MM_\star}(\AAA^{-\ast,-\ast},\MM_\star).\]
By duality, $\AAA_\star$ has a structure of commutative Hopf algebroid which is described in details in \cite[\S 12]{VV:motivicalgebra} in the case where $S$ is the spectrum of a perfect field. Our goal is now to identify $\AAA_\star$ with the Hopf algebroid of co-operations in mod $\ell$ motivic cohomology, \ie, $\MM_\star\MM$. 
This is achieved in Propositions \ref{proposition:dualA} and~\ref{proposition:kunneth} below.
For the following lemma, recall that $\Ho(\MM\Mod)$ is a closed symmetric monoidal category.

\begin{lemma}\label{lem:motfinite}
	Let $(p_\alpha,q_\alpha)$ be a family of bidegrees with $p_\alpha\geq\ 2q_\alpha\geq 0$ and such that, for every $q\in\Z$, 
	there are only finitely many $\alpha$ with $q_\alpha\leq q$. Let $\EE=\bigvee_\alpha\Sigma^{p_\alpha,q_\alpha}\MM\in\Ho(\MM\Mod)$. 
	Then the canonical map $\EE\to\Hom_\MM(\Hom_\MM(\EE,\MM),\MM)$ is an equivalence of $\MM$-modules, 
	and the pairing $\pi_\star\Hom_\MM(\EE,\MM)\tens_{\MM_\star}\pi_\star\EE\to\MM_\star$ is perfect.
\end{lemma}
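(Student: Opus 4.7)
The plan is to compute both the dual and the double dual of $\EE$ directly from its wedge decomposition, reducing the statement to the observation that, at each bidegree, the vanishing of motivic cohomology (Corollary~\ref{cor:vanish}) together with the finiteness hypothesis forces the relevant infinite products of shifted copies of $\MM$ to coincide with the corresponding wedges. Set $\EE^\vee := \Hom_\MM(\EE,\MM)$; by Theorem~\ref{thm:motivic}, $\pi_{r,s}\MM = \HH^{-r,-s}(S,\Z/\ell)$, and write $d$ for the dimension bound appearing in Corollary~\ref{cor:vanish}.

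Since $\Hom_\MM(-,\MM)$ turns wedges into products, $\EE^\vee \simeq \prod_\alpha \Sigma^{-p_\alpha,-q_\alpha}\MM$. I would then check that the canonical map $\bigoplus_\alpha \pi_{p+p_\alpha,q+q_\alpha}\MM \to \prod_\alpha \pi_{p+p_\alpha,q+q_\alpha}\MM$ is an isomorphism at each bidegree $(p,q)$: by the vanishing $\pi_{r,s}\MM=0$ for $s>0$, contributing factors require $q_\alpha \leq -q$, and by the finiteness hypothesis only finitely many such indices exist. Hence $\EE^\vee \simeq \bigvee_\alpha \Sigma^{-p_\alpha,-q_\alpha}\MM$ in $\Ho(\MM\Mod)$.

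The same formal step gives $\Hom_\MM(\EE^\vee,\MM) \simeq \prod_\alpha \Sigma^{p_\alpha,q_\alpha}\MM$, and I would collapse it to a wedge by the same strategy. At bidegree $(p,q)$, nonvanishing of the $\alpha$-th summand $\pi_{p-p_\alpha,q-q_\alpha}\MM$ forces $q_\alpha \geq q$ and $p_\alpha - q_\alpha \leq p - q + d$; combining the second inequality with the hypothesis $p_\alpha \geq 2q_\alpha$ yields $q_\alpha \leq p - q + d$, so only finitely many $\alpha$ contribute. Thus $\Hom_\MM(\EE^\vee,\MM) \simeq \bigvee_\alpha \Sigma^{p_\alpha,q_\alpha}\MM = \EE$, and a summandwise check identifies this equivalence with the canonical biduality map (which is already an isomorphism on each individual summand $\Sigma^{p_\alpha,q_\alpha}\MM$). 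For the second assertion, the same identifications give $\pi_{p,q}\EE = \bigoplus_\alpha \pi_{p-p_\alpha,q-q_\alpha}\MM$ and $\pi_{p,q}\EE^\vee = \bigoplus_\alpha \pi_{p+p_\alpha,q+q_\alpha}\MM$ at each bidegree, with the evaluation pairing being the evident Kronecker-$\delta$ pairing on the canonical generators, so the adjoint map $\pi_\star\EE^\vee \to \Hom_{\MM_\star}(\pi_\star\EE,\MM_\star)$ is a bidegreewise isomorphism of bigraded $\MM_\star$-modules.

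The main obstacle is the double-dual step: one genuinely needs both the weight constraint $p_\alpha \geq 2q_\alpha$ and the dimension-vanishing clause of Corollary~\ref{cor:vanish} in order to produce an upper bound on the contributing $q_\alpha$'s; without either hypothesis the product could fail to collapse to the wedge, and the argument would break down.
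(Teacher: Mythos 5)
Your proof is correct and follows essentially the same route as the paper: in both cases the key point is that the vanishing ranges from Corollary~\ref{cor:vanish} together with the finiteness hypothesis make the relevant products of shifted copies of $\MM$ coincide with the corresponding wedges (the $q<0$ vanishing handles the first dual, while the weight condition $p_\alpha\geq 2q_\alpha$ plus the dimension bound handle the double dual). You spell out the arithmetic that the paper leaves implicit, but the underlying argument is the one the authors use.
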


\begin{proof}
	The hypothesis on $(p_\alpha,q_\alpha)$ together with Corollary~\ref{cor:vanish} implies that
	\begin{gather*}
		\EE=\bigvee_\alpha\Sigma^{p_\alpha,q_\alpha}\MM\simeq\prod_\alpha\Sigma^{p_\alpha,q_\alpha}\MM,\\
		\Hom_\MM(\EE,\MM)\simeq\prod_\alpha\Sigma^{-p_\alpha,-q_\alpha}\MM\simeq\bigvee_\alpha\Sigma^{-p_\alpha,-q_\alpha}\MM.
	\end{gather*}
	This immediately implies the first statement and, taking homotopy groups, the second.
\end{proof}

\begin{proposition}
	\label{proposition:dualA}
	There is a canonical isomorphism $\AAA_\star\cong\MM_\star\MM$ and the canonical map $\AAA^{-\ast,-\ast}\to\Hom_{\MM_\star}(\AAA_\star,\MM_\star)$ is an isomorphism.
\end{proposition}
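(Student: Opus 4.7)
The plan is to apply Lemma~\ref{lem:motfinite} with $\EE=\MM\wedge\MM$. By Theorem~\ref{theorem:main}(3), $\EE\simeq \bigvee_\alpha \Sigma^{p_\alpha,q_\alpha}\MM$ as $\MM$-modules, where $(p_\alpha,q_\alpha)$ ranges over the bidegrees of the admissible monomials of Theorem~\ref{theorem:main}(1). First I would check that this family satisfies the hypotheses of the lemma. A monomial $\beta^{\epsilon_r}P^{i_r}\cdots\beta^{\epsilon_1}P^{i_1}\beta^{\epsilon_0}$ has bidegree $\bigl(\sum_j 2i_j(\ell-1)+\sum_j\epsilon_j,\;\sum_j i_j(\ell-1)\bigr)$, so $p_\alpha-2q_\alpha=\sum_j\epsilon_j\geq 0$ and $q_\alpha\geq 0$. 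The admissibility constraint $i_{j+1}\geq \ell i_j+\epsilon_j$ forces geometric growth of the $i_j$, so a bound $q_\alpha\leq q$ constrains both the length $r$ and the individual $i_j$, leaving only finitely many monomials in any such range.

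Granted these hypotheses, Lemma~\ref{lem:motfinite} supplies a perfect pairing
\[
\pi_\star \Hom_\MM(\MM\wedge\MM,\MM)\;\tens_{\MM_\star}\;\pi_\star(\MM\wedge\MM)\;\longrightarrow\;\MM_\star.
\]
The right-hand factor is by definition $\MM_\star\MM$. For the left-hand factor, the free/forgetful adjunction gives
\[
\pi_{p,q}\Hom_\MM(\MM\wedge\MM,\MM)\;\cong\;[\Sigma^{p,q}\MM,\MM]_{\SH}\;=\;\MM^{-p,-q}\MM,
\]
which by Theorem~\ref{theorem:main}(2) equals $\MMM^{-p,-q}=\AAA^{-p,-q}$. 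The pairing therefore takes the form
\[
\AAA^{-\ast,-\ast}\tens_{\MM_\star}\MM_\star\MM\;\longrightarrow\;\MM_\star.
\]

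Both assertions of the proposition then drop out by dualising one factor at a time. Dualising the left factor yields
\[
\MM_\star\MM\;\xrightarrow{\;\cong\;}\;\Hom_{\MM_\star}(\AAA^{-\ast,-\ast},\MM_\star)\;=\;\AAA_\star,
\]
which is the first claim. Dualising the right factor (and composing with the first isomorphism) yields
\[
\AAA^{-\ast,-\ast}\;\xrightarrow{\;\cong\;}\;\Hom_{\MM_\star}(\MM_\star\MM,\MM_\star)\;\cong\;\Hom_{\MM_\star}(\AAA_\star,\MM_\star),
\]
which is the second. The only substantive point is verifying the finiteness condition needed to invoke Lemma~\ref{lem:motfinite}; after that the proposition is a formal consequence of Theorem~\ref{theorem:main} together with duality for free bigraded $\MM_\star$-modules.
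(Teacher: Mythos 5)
Your proof is correct and follows essentially the same route as the paper: apply Lemma~\ref{lem:motfinite} to $\EE=\MM\wedge\MM$ using the splitting of Theorem~\ref{theorem:main}(3), identify $\pi_\star\Hom_\MM(\MM\wedge\MM,\MM)$ with $\AAA^{-\ast,-\ast}$ via the free/forgetful adjunction and Theorem~\ref{theorem:main}(1)--(2), and read off both claims from the two legs of the perfect pairing. The only addition is your explicit verification of the finiteness hypothesis of Lemma~\ref{lem:motfinite}, which the paper cites without spelling out.
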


\begin{proof}
By Theorem~\ref{theorem:main}, $\AAA^\star\cong\MM^\star\MM$. 
Therefore it suffices to show there is a perfect pairing $\MM^{-\ast,-\ast}\MM\tens_{\MM_\star}\MM_\star\MM\to\MM_\star$. 
This follows from Theorem~\ref{theorem:main} (3) and Lemma~\ref{lem:motfinite}.
\end{proof}

\begin{lemma}\label{lem:kunneth}
	Let $\RR$ be a motivic $E_\infty$-ring spectrum and let $\EE$ and $\mathsf F$ be $\RR$-modules. The canonical map
	\[\pi_\star\EE\tens_{\RR_\star}\pi_\star\mathsf F\to\pi_\star(\EE\wedge_\RR\mathsf F)\]
	is an isomorphism under either of the following conditions:
	\begin{enumerate}
		\item $\mathsf F$ is a cellular $\RR$-module and $\pi_\star\EE$ is flat over $\RR_\star$.
		\item $\mathsf F\simeq\bigvee_\alpha\Sigma^{p_\alpha,q_\alpha}\RR$ as an $\RR$-module.
	\end{enumerate}
\end{lemma}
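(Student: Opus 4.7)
The plan is to treat case (2) directly and then bootstrap it to case (1) via a cellular induction using the flatness hypothesis to control long exact sequences.

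For case (2), since $\EE\wedge_\RR(\ph)$ is a left adjoint it distributes over wedges, giving
\[\EE\wedge_\RR\mathsf F\simeq\bigvee_\alpha\Sigma^{p_\alpha,q_\alpha}\EE.\]
The bigraded homotopy groups of a wedge of module spectra decompose as a direct sum, since $\pi_\star$ preserves filtered homotopy colimits and finite wedges coincide with finite products in a stable setting. Hence $\pi_\star(\EE\wedge_\RR\mathsf F)\cong\bigoplus_\alpha\pi_{\star-(p_\alpha,q_\alpha)}\EE$. On the source side, $\pi_\star\mathsf F\cong\bigoplus_\alpha\RR_{\star-(p_\alpha,q_\alpha)}$ is a free $\RR_\star$-module, so that $\pi_\star\EE\tens_{\RR_\star}\pi_\star\mathsf F$ decomposes identically; comparing the two decompositions summand-by-summand shows the Künneth map is an isomorphism.

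For case (1), the idea is a cellular induction. Let $\scr K\subset\Ho(\RR\Mod)$ be the full subcategory consisting of those $\mathsf F$ for which the Künneth map is an isomorphism. Case (2) already shows that $\scr K$ contains every $\Sigma^{p,q}\RR$ and is closed under arbitrary coproducts. I would next verify that $\scr K$ is closed under cofiber sequences: given $\mathsf F'\to\mathsf F\to\mathsf F''$ with two of the three terms in $\scr K$, applying $\EE\wedge_\RR(\ph)$ yields a cofiber sequence of $\RR$-modules, hence a long exact sequence on $\pi_\star$; on the other side, flatness of $\pi_\star\EE$ over $\RR_\star$ guarantees that $\pi_\star\EE\tens_{\RR_\star}(\ph)$ is exact and thus preserves the long exact sequence coming from $\pi_\star\mathsf F'\to\pi_\star\mathsf F\to\pi_\star\mathsf F''$. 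The Künneth map assembles into a morphism of long exact sequences, and the five lemma then forces the remaining term into $\scr K$. Closure under sequential (and in fact all filtered) colimits is automatic, since $\EE\wedge_\RR(\ph)$, $\pi_\star$, and tensor product over $\RR_\star$ all commute with filtered homotopy colimits.

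Since every cellular $\RR$-module is built as an iterated homotopy colimit of cells $\Sigma^{p,q}\RR$, and in a stable setting such homotopy colimits are generated by coproducts, cofiber sequences, and filtered colimits, the three closure properties above show that $\scr K$ contains every cellular $\RR$-module, completing case (1). The only nontrivial input is the flatness hypothesis, which is used precisely to ensure exactness of the source side of the Künneth comparison; without it the five-lemma step would fail and the induction would break down. I expect this exactness-via-flatness step to be the main obstacle — or rather the main structural ingredient — everything else being standard manipulation of module spectra in a stable monoidal setting.
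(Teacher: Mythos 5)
Your proof is correct and is essentially the paper's argument, just unpacked: the paper condenses your case (1) bootstrap into the observation that under flatness both sides are coproduct-preserving homological functors of $\mathsf F$, so that agreeing on cells forces agreement on all cellular modules, and then notes that case (2) is immediate. The flatness hypothesis plays exactly the role you identify, namely making $\pi_\star\EE\tens_{\RR_\star}(\ph)$ exact so the five-lemma step goes through.
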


\begin{proof}
	Assuming (1), this is a natural transformation between homological functors of $\mathsf F$ that preserve sums, so we may assume (2), in which case the result is obvious.
\end{proof}

\begin{proposition}\label{proposition:kunneth}
Let $\EE$ be an $\MM$-module. Then there is a canonical isomorphism
$$\pi_{\star}\EE\otimes_{\MM_{\star}}\AAA_{\star}\cong\pi_{\star}(\EE\wedge\MM).$$
In particular, $\AAA_\star^{\tens i}\cong\MM_\star(\MM^{\wedge i})$ for all $i\geq 0$ (where the tensor product is over $\MM_\star$).
\end{proposition}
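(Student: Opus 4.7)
The plan is to deduce the isomorphism by combining three already-established results: the splitting of $\MM\wedge\MM$ from Theorem~\ref{theorem:main}~(3), the Künneth formula of Lemma~\ref{lem:kunneth}, and the identification of $\MM_\star\MM$ with $\AAA_\star$ from Proposition~\ref{proposition:dualA}. The proposition is essentially a formal corollary of these.

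First I would observe that since $\EE$ is an $\MM$-module and $\MM$ is $E_\infty$, there is a canonical equivalence of $\MM$-modules
$$\EE \wedge \MM \;\simeq\; \EE \wedge_\MM (\MM \wedge \MM),$$
where on the right $\MM\wedge\MM$ is viewed as an $\MM$-module via one of its two smash factors (the one consumed by $\wedge_\MM$), while the surviving $\MM$-module structure is inherited from the other factor. Next, Theorem~\ref{theorem:main}~(3) presents $\MM \wedge \MM$ as a wedge $\bigvee_\alpha \Sigma^{p_\alpha,q_\alpha}\MM$ of free rank-one $\MM$-modules, which is exactly the hypothesis of Lemma~\ref{lem:kunneth}~(2). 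Applying that lemma with $\RR = \MM$ and $\mathsf F = \MM\wedge\MM$ then yields a natural Künneth isomorphism
$$\pi_\star \EE \tens_{\MM_\star} \pi_\star(\MM \wedge \MM) \;\xrightarrow{\;\cong\;}\; \pi_\star\bigl(\EE \wedge_\MM (\MM \wedge \MM)\bigr) \;\cong\; \pi_\star(\EE \wedge \MM).$$
Substituting $\pi_\star(\MM\wedge\MM) = \MM_\star\MM \cong \AAA_\star$ from Proposition~\ref{proposition:dualA} completes the proof of the first statement.

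For the ``in particular'' claim I would proceed by induction on $i$. The base case $i = 0$ reduces to $\MM_\star(\one) = \MM_\star$, which is the empty tensor product $\AAA_\star^{\tens 0}$ over $\MM_\star$. For the inductive step, I apply the first part of the proposition to $\EE = \MM^{\wedge(i+1)}$, viewed as an $\MM$-module via any one of its smash factors, to obtain
$$\MM_\star(\MM^{\wedge(i+1)}) \;=\; \pi_\star(\MM^{\wedge(i+1)} \wedge \MM) \;\cong\; \pi_\star(\MM^{\wedge(i+1)}) \tens_{\MM_\star} \AAA_\star.$$
Since $\pi_\star(\MM^{\wedge(i+1)}) = \MM_\star(\MM^{\wedge i}) \cong \AAA_\star^{\tens i}$ by the inductive hypothesis, the right-hand side equals $\AAA_\star^{\tens(i+1)}$.

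Rather than a genuine obstacle, the only subtlety I anticipate is making sure that the $\MM$-module structure on $\MM \wedge \MM$ used to invoke Lemma~\ref{lem:kunneth}~(2) agrees with the one appearing in the decomposition supplied by Theorem~\ref{theorem:main}~(3). This is unproblematic because $\MM$ is a commutative ring spectrum, so the two natural $\MM$-module structures on $\MM\wedge\MM$ are interchanged by the symmetry of the smash product; either one may equally well be used.
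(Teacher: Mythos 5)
Your proof is correct and follows essentially the same route as the paper's one‑line argument: rewrite $\EE\wedge\MM$ as $\EE\wedge_\MM(\MM\wedge\MM)$, invoke Lemma~\ref{lem:kunneth}~(2) using the free $\MM$-module decomposition of $\MM\wedge\MM$ from Corollary~\ref{cor:split}, and identify $\pi_\star(\MM\wedge\MM)=\MM_\star\MM$ with $\AAA_\star$ via Proposition~\ref{proposition:dualA}. You merely spell out the intermediate steps (and the induction for the ``in particular'' clause) that the paper leaves implicit.
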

\begin{proof}
	Since $\MM\wedge\MM\simeq\bigvee_\alpha\Sigma^{p_\alpha,q_\alpha}\MM$ and $\MM_\star\MM\cong\AAA_\star$, this is a special case of Lemma~\ref{lem:kunneth}.
\end{proof}

Proposition \ref{proposition:kunneth} is of computational interest because it implies that for any $\EE\in\SH(\Sm_S)$, $\MM_\star\EE$ is a left comodule over $\scr A_\star$. For instance, the $E_{2}$-page of the homological motivic 
Adams spectral sequence is comprised of Ext-groups in the category of $\AAA_{\star}$-comodules. 
See \eg, \cite{DI:2010} and \cite{HKO:2011} for precise statements concerning the construction and convergence of 
the motivic Adams spectral sequence, which can be generalized to essentially smooth schemes over fields by the above results.

We can now give a complete description of the Hopf algebroid $\scr A_\star=\MM_\star\MM$. First, define a Hopf algebroid $(A, \Gamma)$ as follows. Let 
\begin{align*}
	A&=\Z/\ell[\rho,\tau],\\
	\Gamma &=A[\tau_0,\tau_1,\dotsc,\xi_1,\xi_2,\dotsc]/(\tau_i^2-\tau\xi_{i+1}-\rho\tau_{i+1}-\rho\tau_0\xi_{i+1}).
\end{align*}
The structure maps $\eta_L$, $\eta_R$, $\epsilon$, and $\Delta$ are given by the formulas
\begin{align*}
	\eta_L\colon A\to\Gamma,\quad &\eta_L(\rho)=\rho,
	\eta_L(\tau)=\tau,\\
	\eta_R\colon A\to\Gamma,\quad &\eta_R(\rho)=\rho,
	\eta_R(\tau)=\tau+\rho\tau_0,\\
	\epsilon\colon\Gamma\to A,\quad &\epsilon(\rho)=\rho,\\
	&\epsilon(\tau)=\tau,\\
	&\epsilon(\tau_r)=0,\\
	&\epsilon(\xi_{r})=0,\\
	\Delta\colon \Gamma\to\Gamma\tens_A\Gamma,\quad& \Delta(\rho)=\rho\tens 1,\\
	&\Delta(\tau)=\tau\tens 1,\\
	&\Delta(\tau_r)=\tau_r\tens 1+1\tens\tau_r+\sum_{i=0}^{r-1}\xi_{r-i}^{l^i}\tens \tau_i,\\
	&\Delta(\xi_{r})=\xi_r\tens 1+1\tens \xi_r+\sum_{i=1}^{r-1}\xi_{r-i}^{l^i}\tens\xi_i.
\end{align*}
The coinverse map $c\colon\Gamma\to\Gamma$ is determined by the identities it must satisfy. Namely, we have
\begin{align*}
 	&c(\rho)=\rho,\\
 	&c(\tau)=\tau+\rho\tau_0,\\
 	&c(\tau_r)=-\tau_r-\sum_{i=0}^{r-1}\xi_{r-i}^{l^i} c(\tau_i),\\
 	&c(\xi_r)=-\xi_r-\sum_{i=1}^{r-1}\xi_{r-i}^{l^i} c(\xi_i).
\end{align*}

We view $\MM_\aast$ as an $A$-algebra via the map $A\to \MM_\aast$ defined as follows: if $\ell$ is odd it sends both $\rho$ and $\tau$ to $0$, while if $\ell=2$ it sends $\rho$ and $\tau$ to the elements of the same name in $\MM_\aast$.

\begin{theorem}\label{thm:A**}
	$\scr A_\aast$ is isomorphic to $\Gamma\tens_A\MM_\aast$ with
	\[\abs{\tau_r}=(2\ell^r-1,\ell^r-1)\quad\text{and}\quad\abs{\xi_r}=(2\ell^r-2,\ell^r-1).\]
	The map $\MM_\aast\to\scr A_\aast$ dual to the left action of $\scr A^\aast$ on $\MM^\aast$ is a left coaction of $(A,\Gamma)$ on the ring $\MM_\aast$, and the Hopf algebroid $(\MM_\aast,\scr A_\aast)$ is isomorphic to the twisted tensor product of $(A,\Gamma)$ with $\MM_\aast$, \ie,
\begin{itemize}
	\item $\eta_L$ and $\epsilon$ are extended from $(A,\Gamma)$,
	\item $\eta_R\colon H_\aast\to\scr A_\aast$ is the coaction,
	\item $\Delta\colon\scr A_\aast\to \scr A_\aast\tens_{\MM_\aast}\scr A_\aast$ is induced by the diagonal of $\Gamma$ and $\eta_R$ to the second factor,
	\item $c\colon \scr A_\aast\to\scr A_\aast$ is induced by the coinverse of $\Gamma$ and $\eta_R$.
\end{itemize}
\end{theorem}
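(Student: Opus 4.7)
The plan is to reduce to the case $S = \Spec k$ for $k$ a perfect field, where the theorem was proved by Voevodsky~\cite[\S 12]{VV:motivicalgebra}. Use Lemma~\ref{lem:imperfectext} to choose an essentially smooth morphism $f \colon S \to \Spec k$ with $k$ perfect, and let $\MM_k$, $\scr A_\star^k$ denote the motivic cohomology spectrum and dual motivic Steenrod algebra over $k$. By Theorem~\ref{thm:EMLpb}, $f^*\MM_k \simeq \MM$. Since $f^*$ is symmetric monoidal, applying it to the equivalence $\MM_k \wedge \MM_k \simeq \bigvee_\alpha \Sigma^{p_\alpha,q_\alpha} \MM_k$ of Corollary~\ref{cor:split} yields the analogous wedge decomposition of $\MM \wedge \MM$. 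Lemma~\ref{lem:kunneth} then produces a canonical isomorphism of $\MM_\star$-modules
$$\scr A_\star = \pi_\star(\MM \wedge \MM) \cong \MM_\star \otimes_{\MM_{k,\star}} \pi_\star(\MM_k \wedge \MM_k) = \MM_\star \otimes_{\MM_{k,\star}} \scr A_\star^k.$$
Combining this with Voevodsky's isomorphism $\scr A_\star^k \cong \Gamma \otimes_A \MM_{k,\star}$, and observing that the composite $A \to \MM_{k,\star} \to \MM_\star$ sends $\rho$ and $\tau$ to the elements of the same name, yields the desired isomorphism $\scr A_\star \cong \Gamma \otimes_A \MM_\star$ with the prescribed bidegrees for $\tau_r$ and $\xi_r$.

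For the Hopf algebroid structure, each of the structure maps $\eta_L, \eta_R, \epsilon, \Delta, c$ on $(\MM_\star, \scr A_\star)$ is constructed via morphisms in the symmetric monoidal category $\SH(\Sm_S)$ involving only the $E_\infty$-ring structure on $\MM$: $\eta_L$ and $\eta_R$ come from the two maps $\MM \to \MM \wedge \MM$ induced by the unit $\1 \to \MM$, $\epsilon$ from the multiplication $\MM \wedge \MM \to \MM$, $\Delta$ from $\MM \wedge \MM \to \MM \wedge \MM \wedge \MM$ inserting $\1 \to \MM$ in the middle, together with the Künneth isomorphism $\pi_\star(\MM \wedge \MM \wedge \MM) \cong \scr A_\star \otimes_{\MM_\star} \scr A_\star$ provided by Lemma~\ref{lem:kunneth}, and $c$ from the twist $\MM \wedge \MM \to \MM \wedge \MM$. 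Since $f^*$ is symmetric monoidal and preserves the $E_\infty$-ring structure on $\MM$, each of these maps commutes with $f^*$, so $(\MM_\star, \scr A_\star)$ is obtained from $(\MM_{k,\star}, \scr A_\star^k)$ by base change. By construction this is the twisted tensor product of $(A, \Gamma)$ with $\MM_\star$ in the sense described in the theorem.

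The main point requiring verification is that under the Künneth isomorphism above, the right unit $\eta_R \colon \MM_\star \to \scr A_\star$ really is identified with the twisted right unit of the twisted tensor product---\ie, factors through the $(A, \Gamma)$-coaction on $\MM_\star$ rather than being simply the inclusion of the second tensor factor $\MM_\star \hookrightarrow \Gamma \otimes_A \MM_\star$. This compatibility is inherited from the perfect field case via the fact that $\eta_R$ commutes with $f^*$, together with the observation that the coaction in the twisted tensor product is itself determined by $\eta_R$ on $\scr A_\star^k$.
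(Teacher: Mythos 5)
Your proposal is correct and follows essentially the same route as the paper: reduce to a perfect field $k$ via an essentially smooth $f\colon S\to\Spec k$, then observe that $(\MM_\aast,\scr A_\aast)$ is obtained from $(\MM_{k,\aast},\scr A^k_\aast)$ by extension of scalars, using that $\L f^\ast$ is symmetric monoidal together with Theorem~\ref{thm:EMLpb} and Corollary~\ref{cor:split}. Your proof is a more detailed unpacking of the paper's terse argument; the only minor imprecision is invoking Lemma~\ref{lem:kunneth} for the base-change isomorphism $\scr A_\star\cong\MM_\star\otimes_{\MM_{k,\star}}\scr A^k_\star$, which really follows directly from the compatible wedge decompositions rather than from that lemma (which concerns smash products over a ring spectrum, not base change along $f^\ast$).
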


\begin{proof}
	If $S$ is the spectrum of a perfect field, this is proved in \cite[\S12]{VV:motivicalgebra}. In general, choose an essentially smooth morphism $f\colon S\to\Spec k$ where $k$ is a perfect field. Note that the induced map $(\MM_k)_\aast\to (\MM_S)_\aast$ is a map of $A$-algebras. It remains to observe that the Hopf algebroid $\scr A_\aast$ is obtained from $(\MM_k)_\aast \MM_k$ by extending scalars from $(\MM_k)_\aast$ to $(\MM_S)_\aast$, which follows formally from the following facts: $\L f^\ast$ is a symmetric monoidal functor, Theorem~\ref{thm:EMLpb}, and Corollary~\ref{cor:split}.
\end{proof}

\subsection{Modules over motivic cohomology}
\label{subsection:HZmod}

In this paragraph we generalize the the main result in \cite{RO1}, \cite{RO2}. Recall that there is a symmetric monoidal Quillen adjunction
\[
\Phi\colon{\MM R}\Mod \rightleftarrows \Spt^\tr(\Sm_S,R) :\Psi
\]
which is a Quillen equivalence when $S$ is a field of characteristic zero \cite[Theorem 1]{RO2} or when $S$ is any perfect field and $\Q\subset R$ \cite[Theorem 68]{RO2}. In the following, all functors are derived by default.

\begin{lemma}
\label{lemma:retract}
Let $k$ be a perfect field, $g\colon V\to U$ an $\fpsl$-cover in $\Sm_k$, and $R$ a $\Z_{(\ell)}$-algebra. Then $\MM R\wedge g_+$ has a section in the homotopy category of $\MM R$-modules.
\end{lemma}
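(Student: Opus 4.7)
The plan is to construct the section as $\Tr_g(\iota_V)/d$, where $\iota_V\colon \Sigma^\infty V_+ \to \MM R \wedge V_+$ is the unit of the free-$\MM R$-module adjunction, $\Tr_g$ is a transfer coming from the structure of traces on $\MM R$, and $d = \mathrm{rk}_{\OO_U}(g_\ast\OO_V)$. By the $\fpsl$ hypothesis, $d$ is prime to $\ell$ and hence invertible in the $\Z_{(\ell)}$-algebra $R$; consequently $d$ acts invertibly on every $\MM R$-module, which is what makes the division by $d$ legitimate.

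First I would set up the transfer. By Corollary~\ref{corollary:MZtraces}, $\MM R$ has a structure of traces, and by Proposition~\ref{proposition:moduletraces}, this propagates to structures of traces on the sections of $X \mapsto \SH(\Sm_X)$ associated to $\MM R \wedge V_+$ and $\MM R \wedge U_+$. Moreover, the construction of the induced trace in Proposition~\ref{proposition:moduletraces} is natural in the second factor $\mathsf F$---it is built from the projection formula $f_\ast(\EE_Y \wedge f^\ast \mathsf F_X) \simeq f_\ast\EE_Y \wedge \mathsf F_X$ and the trace on $\EE$---so the morphism $\MM R \wedge g_+$ is itself a morphism of sections-with-traces. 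Applying Remark~\ref{remark:traces} with $\EE = \1$ therefore yields a morphism $(\MM R \wedge g_+)_\ast\colon P \to P'$ of presheaves with transfers on $\Sch_k$, where $P(X) = [\Sigma^\infty X_+, \MM R \wedge V_+]_{\SH}$ and $P'(X) = [\Sigma^\infty X_+, \MM R \wedge U_+]_{\SH}$.

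Next I would set $\tilde s := \Tr_g(\iota_V)/d \in P(U)$ and let $s\colon \MM R \wedge U_+ \to \MM R \wedge V_+$ be the corresponding $\MM R$-module map under the adjunction $P(U) \cong \Hom_{\Ho(\MM R\Mod)}(\MM R \wedge U_+, \MM R \wedge V_+)$. To see that $s$ is a section of $\MM R \wedge g_+$, I compute in $P'(U)$
\[
(\MM R \wedge g_+)_\ast \tilde s = \Tr_g\bigl((\MM R \wedge g_+)_\ast \iota_V\bigr)/d = \Tr_g(g^\ast \iota_U)/d = (d\cdot\iota_U)/d = \iota_U,
\]
using that $(\MM R \wedge g_+)_\ast$ commutes with $\Tr_g$ (morphism-of-transfers property), the tautology $(\MM R \wedge g_+) \circ \iota_V = \iota_U \circ g_+ = g^\ast \iota_U$, and the degree axiom $\Tr_g \circ g^\ast = d\cdot\id$ in $P'$. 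The step I expect to require the most care is the naturality assertion of the preceding paragraph, namely that the trace of Proposition~\ref{proposition:moduletraces} is functorial in $\mathsf F$ so that $\MM R \wedge g_+$ preserves both trace structures; this should be essentially formal, following from the naturality in $\mathsf F$ of the projection formula isomorphism used in the construction.
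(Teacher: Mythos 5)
Your approach is correct in outline, but it takes a genuinely different route from the paper's. The paper observes that in the triangulated category $\Ho(\MM R\Mod)$ a morphism is a split epimorphism if and only if precomposition with it is injective into every object, and then checks injectivity of $[U_+,\EE]\to[V_+,\EE]$ for free $\MM R$-modules $\EE$ using a weak structure of smooth traces on $\EE$. You instead construct the section explicitly as (the $\MM R$-linear extension of) $\Tr_g(\iota_V)/d$, which works and is arguably more direct, but requires the extra verification that $\MM R\wedge g_+$ is compatible with the two trace structures; you correctly identify this as the delicate point and correctly locate its proof in the naturality of the projection formula in the $\mathsf F$-variable.

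There is, however, a citation problem. You invoke Corollary~\ref{corollary:MZtraces} to get a structure of traces on $\MM R$, but that corollary requires $1/c\in R\subset\Q$, whereas here $R$ is an arbitrary $\Z_{(\ell)}$-algebra (e.g.\ $R=\Z/\ell$), so it does not apply. Fortunately you do not need the full structure of traces: since an $\fpsl$-cover is by Definition~\ref{definition:fpsl} finite flat surjective with $g_\ast\OO_V$ a \emph{free} $\OO_U$-module, $g$ is a globally free cover in $\Sm_k$, and a \emph{weak} structure of smooth traces already provides $\Tr_g$ together with the degree axiom $\Tr_g\circ g^\ast=d\cdot\id$, which is all your computation uses. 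Proposition~\ref{prop:MZweakTraces} gives a weak structure of smooth traces on $\MM R=u^\tr(\L R^\tr\1)$ for an arbitrary $R$, and Proposition~\ref{proposition:moduletraces} propagates it to $\MM R\wedge V_+$ and $\MM R\wedge U_+$ by the same projection-formula construction you appeal to. With that substitution the rest of your argument goes through; the compatibility of $\MM R\wedge g_+$ with the traces is the naturality in $\mathsf F$ of the construction in Proposition~\ref{proposition:moduletraces}, exactly as you say.
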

\begin{proof}
Since $\Ho(\MM R\Mod)$ is a triangulated category, it suffices to show that $\MM R\wedge g_+$ is an epimorphism, \ie, that for every $\MM R$-module $\EE$, 
\begin{equation}
	\label{eqn:injective}
	[U_+, \EE]\to[V_+, \EE]
\end{equation}
is injective. If $a\colon U\to\Spec k$ is the structure map, we have $U_+=a_\sharp a^\ast\1$ and $V_+=(ag)_\sharp(ag)^\ast\1$. Now for any smooth map $f$, $f_\sharp f^\ast$ is left adjoint to $f_\ast f^\ast$. Using this adjunction, we can identify~\eqref{eqn:injective} with the map
\[
[\1, a_\ast a^\ast \EE]\to [\1, (ag)_\ast (ag)^\ast \EE]
\]
induced by the unit of the adjunction $(g^\ast,g_\ast)$. Since $\EE$ is a retract of $\MM R\wedge \EE$ in $\SH(\Sm_k)$, we can assume that $\EE$ is a free $\MM R$-module. By Propositions~\ref{prop:MZweakTraces} and~\ref{proposition:moduletraces}, $\EE$ then has a weak structure of smooth traces. Since $R$ is a $\ZZ_{(\ell)}$-algebra, it follows that $a_\ast a^\ast \EE\to (ag)_\ast (ag)^\ast \EE$ has a retraction, namely $\frac 1d a_\ast(\Tr_g)$ where $d$ is the degree of $g$.
\end{proof}

\begin{theorem}
\label{theorem:MAlmodules}
Assume that $S$ is the spectrum of a perfect field of characteristic $p>0$ and let $R$ be a commutative ring in which $p$ is invertible. Then $(\Phi,\Psi)$ is a Quillen equivalence.
\end{theorem}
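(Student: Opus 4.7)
The plan is to adapt the proof of \cite[Theorem 1]{RO2} to positive characteristic, replacing resolution of singularities with Gabber's theorem and the $\ldh$-descent results of \S\ref{subsection:tfsI}.

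First, I would reduce the Quillen equivalence to a cellularity statement. Both $\Ho(\MM R\Mod)$ and $\DM(\Sm_k,R)$ are compactly generated triangulated categories; $\L\Phi$ preserves compact objects and sends the compact generators $\MM R\wedge\Sigma^\infty X_+$ of the source to the compact generators $R^\tr\Sigma^\infty X_+$ of the target. Consequently $\R\Psi$ is automatically conservative, and the adjunction is an equivalence iff the comparison map
\[
[\MM R\wedge\Sigma^{r,s}Y_+,\MM R\wedge\Sigma^{p,q}X_+]_{\MM R}
\to
[R^\tr\Sigma^{r,s}Y_+,R^\tr\Sigma^{p,q}X_+]_{\DM}
\]
induced by $\L\Phi$ is bijective for all $X,Y\in\Sm_k$ and all $p,q,r,s\in\Z$. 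By Lemma~\ref{lem:HZmod}, this holds as soon as $\MM R\wedge\Sigma^\infty X_+$ is cellular in $\Ho(\MM R\Mod)$ for every $X\in\Sm_k$, so the essential content becomes this cellularity statement.

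Second, I would establish cellularity by combining Gabber's theorem with Lemma~\ref{lemma:retract}. Since $R$ contains $1/p$, an arithmetic fracture reduces cellularity to its $(\ell)$-local versions for all primes $\ell\neq p$ (the rational case being the content of \cite[Theorem 68]{RO2}). For fixed $\ell\neq p$, the ring $R_{(\ell)}$ is a $\Z_{(\ell)}$-algebra, so Lemma~\ref{lemma:retract} guarantees that every $\fpsl$-cover $V\to U$ in $\Sm_k$ exhibits $\MM R_{(\ell)}\wedge U_+$ as a retract of $\MM R_{(\ell)}\wedge V_+$ in $\Ho(\MM R_{(\ell)}\Mod)$. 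Combined with $\cdh$-descent for $\MM R_{(\ell)}$-modules (Corollary~\ref{corollary:ldescent}) and the smooth quasi-projective $\ldh$-covers produced by Gabber's Theorem~\ref{theorem:gabberGlobal} and Corollary~\ref{corollary:l'resolution}, Noetherian induction on $\dim X$ reduces cellularity of $\MM R_{(\ell)}\wedge X_+$ to that of smooth schemes of strictly lower dimension (from the $\cdh$-parts) and to retracts of smooth quasi-projective pieces (from the $\fpsl$-parts). The base case $\dim X=0$ amounts to finite separable extensions of $k$, which become cellular after $(\ell)$-localization by transfers and étale descent.

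The main obstacle is closing the induction at the smooth quasi-projective level: Gabber produces a smooth quasi-projective cover at every step, and one must ensure that such pieces themselves stay within the cellular subcategory. I would address this by iterating Gabber's theorem along the strict normal crossings exceptional divisors produced in Theorem~\ref{theorem:gabberGlobal}~(4), and using that the cellular subcategory is closed under retracts, shifts, Tate twists, and homotopy colimits. Since each iteration strictly decreases the complexity of the exceptional data (as in the Noetherian induction of \cite[\S5]{KellyThesis}), the process terminates and yields cellularity of $\MM R\wedge\Sigma^\infty X_+$ for every $X\in\Sm_k$. Reassembling via the arithmetic fracture with the rational case of \cite[Theorem 68]{RO2} then produces the desired Quillen equivalence.
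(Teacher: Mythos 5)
Your opening reduction is wrong, and that invalidates the overall strategy. You propose to reduce the Quillen equivalence to the statement that $\MM R\wedge\Sigma^\infty X_+$ is cellular for every $X\in\Sm_k$, but this cellularity is simply false in general. Since $\L\Phi(\MM R\wedge\Sigma^\infty X_+)\simeq R^\tr\Sigma^\infty X_+$ is the motive $M(X)$ of $X$, and $\L\Phi$ carries cellular $\MM R$-modules to cellular objects of $\DM(\Sm_k,R)$ (\ie\ to objects of the localizing subcategory generated by Tate twists of the unit), cellularity of $\MM R\wedge\Sigma^\infty X_+$ would force $M(X)$ to be a mixed Tate motive. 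Already for $X$ an elliptic curve this fails. So Lemma~\ref{lem:HZmod} cannot be applied in the way you propose; what must be proved is that the unit $\eta_{X_+}\colon\MM R\wedge\Sigma^\infty X_+\to\R\Psi\L\Phi(\MM R\wedge\Sigma^\infty X_+)$ is an equivalence, which is genuinely weaker than cellularity and is what both \cite[Theorem~68]{RO2} and the paper actually establish.

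Even setting this aside, the induction you sketch is not the right shape. Your appeal to ``$\cdh$-descent'' and ``the $\cdh$-parts'' of an $\ldh$-cover to reduce dimension does not stay inside $\Sm_k$: the closed strata of a $\cdh$-square over a smooth base are generally singular, and Corollary~\ref{corollary:ldescent} is a statement about $\ldh$-hyperdescent of presheaves of spectra (the wrong variance for building $\MM R\wedge X_+$ as a colimit). The paper's actual mechanism for closing the induction is different and more delicate: fix a projective compactification $Y$ of $X$, apply Gabber's Theorem~\ref{theorem:gabberGlobal} to get a smooth projective alteration $Y'\to Y$ with snc boundary, observe that $Y'_+$ is dualizable (so the unit is an equivalence there), deduce dualizability of $X'_+=f^{-1}(X)_+$ by homotopy purity along the snc divisor, then compare $V_+=f^{-1}(U)_+$ and $U_+$ via the cofiber sequences $V_+\to X'_+\to X'/V$ and $U_+\to X_+\to X/U$, whose cofibers lie in the localizing subcategory generated by lower-dimensional smooth schemes (\cite[Lemma~66]{RO2}). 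Lemma~\ref{lemma:retract} is then used exactly once, at the end, to pass from $V_+$ to $U_+$ as a retract. Your sketch does identify the right ingredients (Gabber, the $\fpsl$ retraction, $(\ell)$-localization) but misses dualizability/homotopy purity, which is what actually makes the dimension induction work.
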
 
\begin{proof}
As in \cite{RO2}, it suffices to prove that the unit
$\eta_{X_+}\colon\MM R\wedge X_+\to \Psi\Phi(\MM R\wedge X_+)$
is a weak equivalence for all quasi-projective and connected $X\in \Sm_S$, knowing that this holds if $X$ is projective (more generally, $\eta_E$ is an equivalence if $E\in\SH(\Sm_S)$ is dualizable). A map of $\MM\Z$-modules is an equivalence if and only it induces equivalences of $\MM\Z_{(\ell)}$-modules for every prime $\ell$, and since both $\Phi$ and $\Psi$ preserve filtered homotopy colimits we can assume that $R$ is a $\Z_{(\ell)}$-algebra for some prime $\ell\neq\Char k$. In this case we follow the proof of the rational result \cite[Theorem 68]{RO2} but use Gabber's theorem instead of de Jong's theorem.

We proceed by induction on the dimension of $X$, the case $d=0$ being taken care of since $X$ is then necessarily projective. Let $\SH_d(k)$ be the localizing subcategory of $\SH(\Sm_k)$ generated by shifted suspension spectra of smooth connected $k$-schemes of dimension $\leq d$. By induction hypothesis, $\eta_E$ is an equivalence for every $E\in\SH_{d-1}(k)$. Choose an open embedding $j\colon X\hookrightarrow Y$ into an integral projective $k$-scheme. Let $f\colon Y'\to Y$ be the map given by Gabber's Theorem~\ref{theorem:gabberGlobal}, so that $X'=f^{-1}(X)$ is the complement of a divisor with strict normal crossings. Let $U\subset X$ be an open subset on which $f$ restricts to an $\fpsl$-cover $g\colon V=f^{-1}(U)\to U$. Since $Y'$ is smooth and projective, $Y'_+$ is dualizable in $\SH(\Sm_k)$. By homotopy purity and induction on the number of irreducible components of $Y'\smallsetminus X'$ (\cf{} the proof of \cite[Theorem~52]{RO2}), $X'_+$ is dualizable in $\SH(\Sm_k)$ and hence $\eta_{X'_+}$ is an equivalence. Consider the cofiber sequences
\begin{gather*}
	V_+\hookrightarrow X'_+\to X'/V,\\
	U_+\hookrightarrow X_+\to X/U.
\end{gather*}
 By \cite[Lemma 66]{RO2}, $X'/V$ and $X/U$ belong to $\SH_{d-1}(k)$. By induction hypothesis, $\eta_{X'/V}$ and $\eta_{X/U}$ are equivalences. Thus, $\eta_{V_+}$ is also an equivalence, and it remains to prove that $\eta_{U_+}$ is an equivalence. This follows from Lemma~\ref{lemma:retract}, since $\MM R\wedge U_+$ is a retract of $\MM R\wedge V_+$ in $\Ho(\MM R\Mod)$.
\end{proof}

\vspace{0.5in}
\providecommand{\bysame}{\leavevmode\hbox to3em{\hrulefill}\thinspace}

\vspace{0.1in}

\vspace{0.1in}

\begin{center}
Department of Mathematics, Northwestern University, USA.\\
e-mail: hoyois@math.northwestern.edu
\end{center}

\begin{center}
Fakultät Mathematik, Universität Duisburg-Essen, Germany.\\
e-mail: shanekelly64@gmail.com
\end{center}

\begin{center}
Department of Mathematics, University of Oslo, Norway.\\
e-mail: paularne@math.uio.no
\end{center}
\end{document}